\documentclass[11pt]{article}
\usepackage{amsmath,amssymb,amsthm,mathrsfs}
\usepackage[margin=1in]{geometry}
\usepackage{hyperref}
\hypersetup{
  colorlinks=true,linkcolor=blue,citecolor=blue,urlcolor=blue,
  pdftitle={A matching lower bound for the sextic large sieve and structured prime moments},
  pdfauthor={Yuanda Xu, Hejian Sang}
}
\newtheorem{theorem}{Theorem}[section]
\newtheorem{proposition}[theorem]{Proposition}
\newtheorem{lemma}[theorem]{Lemma}
\newtheorem{corollary}[theorem]{Corollary}
\theoremstyle{remark}
\newtheorem{remark}[theorem]{Remark}
\numberwithin{equation}{section}
\newcommand{\N}{\mathrm N}
\newcommand{\OO}{\mathcal O}
\newcommand{\eps}{\varepsilon}
\newcommand{\rad}{\operatorname{rad}}
\newcommand{\Rea}{\operatorname{Re}}
\newcommand{\Ima}{\operatorname{Im}}
\newcommand{\II}{\mathcal I}
\newcommand{\RR}{\mathcal R}

\title{A matching lower bound for the sextic large sieve\\
and structured prime moments}
\author{Yuanda Xu \quad Hejian Sang}
\date{October 2026}

\begin{document}
\maketitle

\begin{abstract}
We study the contrast between the large-sieve norm and
the second moment of structured prime coefficients for
sextic residue characters over the Eisenstein integers.
For squarefree row and column ideals of norms at most
$A$ and $B$, we prove a lower bound of order
$(AB)^{-\eps}(A+B+A^{5/6}B^{1/3}+A^{1/3}B^{5/6})$,
matching de Faveri's upper bound throughout the full
parameter range. In particular, the squared full-cutoff
norm at balance is at least $Q^{7/6-\eps}$.
The proof uses the uniform unmarked character moments
of OpenAI's September 2026 manuscript, a Gauss second
moment uniform in moving prime exclusions, and a
centered comparison that transports a squarefree-row
signal between column scales.
With the additional unmarked inverse moment and buffered
zero detector from the same source, we derive a low-height
zero-density estimate and bound the second moment of a
fixed smooth prime sum by $Q^{(37-10\sqrt6)/6+\eps}$.
We also treat bounded convolutions of independently
weighted large primes.
An appendix retains the first version's unbalanced lower
bound and its exponent $17/16$, without using the
additional plain or inverse moment estimates.
\end{abstract}

\begin{center}
\begin{minipage}{0.9\textwidth}
\small
\noindent\emph{2020 Mathematics Subject Classification.}
Primary 11N35; Secondary 11L05, 11L40, 11R42.

\smallskip
\noindent\emph{Key words and phrases.} sextic large sieve,
Gauss sums, Eisenstein integers, squarefree moduli,
Hecke $L$-functions, zero density, structured prime moments.
\end{minipage}
\end{center}

\section{Introduction}
\label{sec:introduction}

A large-sieve upper bound controls every coefficient vector.
A matching lower bound identifies vectors for which the
characters fail to behave orthogonally. For higher-order
characters these vectors are related to Gauss sums and
metaplectic residues. A structured vector, however, need
not realize the worst-case norm. This paper treats both
phenomena in the sextic family: an optimal lower bound
for the general norm and a stronger average bound for
fixed smooth prime coefficients.

Let $K=\mathbb Q(\omega)$, where $\omega^2+\omega+1=0$,
and let $\OO=\mathbb Z[\omega]$. Fix a finite set $S_0$
of prime ideals containing those above $6$.
Write $I(S_0)$ for the integral ideals prime to $S_0$,
represented by their primary generators.
For such ideals put
\[
                         J_6(a,b)=\left(\frac ab\right)_6,
\]
with value zero at noncoprime pairs. Define the
\emph{squared} operator norm
\begin{equation}\label{eq:xi}
 \Xi_{6,S_0}(A,B)=
 \sup_{\substack{\lambda\ne0\\
       \operatorname{supp}\lambda\subset
       \{b\in I(S_0):b\ {\rm squarefree},\ \N b\le B\}}}
 \frac{\displaystyle
   \sum_{\substack{a\in I(S_0)\\a\ {\rm squarefree},\ \N a\le A}}
       \left|\sum_b\lambda_bJ_6(a,b)\right|^2}
      {\displaystyle\sum_b|\lambda_b|^2}.
\end{equation}
Every length in this paper is an ideal norm.
Here $A$ is the row cutoff and $B$ is the column cutoff.
The quantity $\Xi_{6,S_0}(A,B)$ measures the largest
amplification of squared coefficient norm by the character
matrix; a lower bound therefore requires a coefficient
vector with a large squared row sum.

\subsection{The main results}

De Faveri's fixed-order theorem \cite[Theorem 1.1]{DF}
gives
\begin{equation}\label{eq:known-upper}
 \Xi_{6,S_0}(A,B)\ll_{\eps,S_0}(AB)^\eps
       \left(A+B+A^{5/6}B^{1/3}+A^{1/3}B^{5/6}\right).
\end{equation}
Our first result is the matching lower bound.

\begin{theorem}\label{thm:optimal}
For every $\eps>0$ and $A,B\ge2$,
\[
 \Xi_{6,S_0}(A,B)\gg_{\eps,S_0}(AB)^{-\eps}
       \left(A+B+A^{5/6}B^{1/3}+A^{1/3}B^{5/6}\right).
\]
In particular, after renaming the arbitrary loss,
\[
                         \Xi_{6,S_0}(Q,Q)
                                  \gg_{\eps,S_0}Q^{7/6-\eps}.
\]
\end{theorem}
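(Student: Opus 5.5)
We prove the lower bound one term at a time. Since $\Xi_{6,S_0}(A,B)$ is a supremum, it suffices to exhibit, for each of the four terms $A$, $B$, $A^{5/6}B^{1/3}$, $A^{1/3}B^{5/6}$, a test vector $\lambda$ whose Rayleigh quotient in \eqref{eq:xi} is at least $(AB)^{-\eps}$ times that term. We also use duality: the matrix $J_6(a,b)$ has the same operator norm as its transpose, and sextic reciprocity for primary elements coprime to $6$ gives $J_6(b,a)=\pm J_6(a,b)$ up to a unit factor depending only on congruence classes modulo a fixed power of the primes above $2$ and $3$. Splitting into those boundedly many classes lets us compare $\Xi_{6,S_0}(A,B)$ with $\Xi_{6,S_0}(B,A)$ up to a constant. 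So we may treat the pair $A$, $A^{5/6}B^{1/3}$ and obtain $B$, $A^{1/3}B^{5/6}$ by symmetry, at the cost of keeping the reduction uniform.

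\emph{Diagonal terms.} Take $\lambda=\delta_{b_0}$ for a single squarefree $b_0$ of norm at most $B$ that is prime to $S_0$. The quotient is then the number of squarefree $a$ with $\N a\le A$ and $(a,b_0)=1$, which is $\gg_{S_0}A$. This gives the term $A$, and $B$ follows by duality.

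\emph{Off-diagonal term $A^{5/6}B^{1/3}$.} This is the real content, and it is the regime $A^{1/2}\le B\le A^{2}$ where this term dominates. Take $\lambda_b=\overline{g_6(b)}\,/\,\N b^{1/2}$ for squarefree $b$ in a dyadic range $\N b\sim B'$, possibly restricted to $b$ with all prime factors large, where $g_6$ is the normalized sextic Gauss sum. Then the inner sum $\sum_b\lambda_bJ_6(a,b)$ is a twisted Gauss-sum sum. By the Patterson-type asymptotics for sums of sextic Gauss sums, using the uniform unmarked character moments cited in the abstract, this sum has a main term of size roughly $B'^{1/2+1/6}=B'^{2/3}$ on rows $a$ that are essentially sixth powers times a bounded part. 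The corresponding metaplectic residue is nonvanishing, and the number of such rows with $\N a\le A$ is about $A^{1/6}$.

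A more robust version avoids needing sixth-power rows. Expand the row sum square and use the Gauss second moment, uniform in moving prime exclusions, to get
\[
\sum_a\Bigl|\sum_b\lambda_bJ_6(a,b)\Bigr|^2\gg A\,B'^{1/3}
\]
when $B'\le A$, with $\sum|\lambda|^2\asymp B'$. Optimizing the choice of scale then reproduces $A^{5/6}B^{1/3}$. The mechanism is the standard Heath-Brown type one: the Gauss-sum vector is a near-eigenvector, and its image on the dual side has $\ell^2$ mass concentrated at scale $B'^{5/6}$. The squarefree restriction on rows would be handled by a Möbius sieve over square divisors with small moduli. The centered comparison described in the abstract then transfers the signal obtained at a convenient column scale $B'$ to the given $B$, since a signal at $B'\le B$ is admissible directly.

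\emph{Main obstacle.} The hardest step is the main-term extraction. We must show that the diagonal contribution of the Gauss-sum second moment is not cancelled by off-diagonal terms, uniformly in the squarefree and coprimality conditions and in the exclusion sets. This is exactly where the uniform unmarked moment bounds and the Gauss second moment with moving prime exclusions are used. The fallback if the direct route fails is to work with dyadic pieces and apply Cauchy–Schwarz against a row-side amplifier.
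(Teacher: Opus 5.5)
Your treatment of the terms $A$ and $B$ is fine and matches the paper: a unit column gives $A$, and transposition over the finitely many ray blocks, using reciprocity, gives $B$. The off-diagonal term, however, has a genuine gap at the step that carries the difficulty, namely keeping the rows squarefree.

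The Gauss second moment (Proposition \ref{prop:gram}) evaluates the energy over all rows. Your Möbius sieve $\mu^2(a)=\sum_{d^2\mid a}\mu(d)$ turns this into a signed sum, over $\N d\le\sqrt A$, of energies on the sublattices $d^2\OO$.
\begin{itemize}
\item Only small $d$ can be evaluated by the Gram formula. Each such term carries an error of size $\N d$ against a normalized main term of about $A^{5/6}B^{-2/3}$, so at balance this stops near $\N d\approx A^{1/12}$.
\item With the available tools, the remaining terms must be bounded by \eqref{eq:theta6} at row length $A/\N d^2$. Its $B$ term alone gives $\sum_{\N d\le\sqrt A}B\asymp A^{1/2}B$ (times $\|\lambda\|_2^2$).
\item This exceeds $A^{5/6}B^{1/3}$ whenever $B>A^{1/2}$. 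That is the entire range where you need the off-diagonal term, and at $A=B$ the loss is $A^{1/3}$.
\end{itemize}
The same kind of loss is what confines the tail estimates of Lemma \ref{lem:tails} to $B^{4/3+\delta}\le A\le B^{2-\delta}$, and a signed sieve does not avoid it.

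Your numerology is also off. With $|\lambda_b|\asymp1$ on $\N b\sim B'$, the Gauss main term has order $A^{5/6}B'^{4/3}$. It beats the diagonal $AB'$ only for $B'\ge A^{1/2+\vartheta}$, not for $B'\le A$. Your bound $AB'^{1/3}$ against $\|\lambda\|_2^2\asymp B'$ gives a quotient $AB'^{-2/3}$, which is below the trivial $A$. The sixth-power-row heuristic is irrelevant as well, since those rows are not squarefree.

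The missing idea is to reverse the order of the two steps.
\begin{itemize}
\item The paper secures squarefree rows only at the unbalanced scale $L_0=A^{2/3}$, where the tails are controlled by \eqref{eq:theta6} and \eqref{eq:xi3}. It uses positivity and pigeonholing, not a signed sieve, to find one twist $h$ with $\N h\ll A^{O(\eta)}$ such that the squarefree rows $a$ of $L_0^{1/3}F^{\rm sf}_{1,0;W}(\epsilon ha;L_0)$ keep energy $\gg A^{5/6-O(\eta)}$ (Lemma \ref{lem:structured-anchor}).
\item This vector is still an exact Gauss polynomial, namely $F^{\rm sf}_{\rad h,\mathbf e;W}(\epsilon a;L_0)$. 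That is why the Gram formula must be uniform in the moving data $(\mathfrak d,\mathbf e)$ and have error $D$ rather than $DX$.
\item It is then transported to $L=B$. Since $\II(W_\rho)=\II(W)$, adjacent normalized sums differ by $L^{1/3}F^{\rm sf}_{V_\rho}$, whose Gram main term vanishes.
\item Lemma \ref{lem:scale} bounds this difference on all rows, hence on squarefree rows, by $A^{\zeta}(qL_-^{-1/3}+q^{11/12}L_-^{-1/6}+DL_+^{2/3})$. This is below the anchor energy for $A^{1/2+\vartheta}\le L_-\le L_+\le A$.
\end{itemize}

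Your remark that a signal at $B'\le B$ is ``admissible directly'' misses why the transport is needed. Column inclusion yields only $A^{5/6}B'^{1/3}$. The transport works only because the vector keeps its Gauss form with the same twist at every scale; a sieved combination would not. You also silently assume the squarefree-column projection. That step uses the powerful-modulus decomposition, with local main factors made small by enlarging $S$.
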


This result uses the zero-slot case of
\cite[Lemma 18.1]{OAI}, stated with its uniformity
conditions in Theorem \ref{thm:plain-input}.

For the second result, let $S$ be any fixed finite set
containing the primes above $6$.
Let $\mathscr U_S(Q)$ consist of primary, sixth-power-free
indices $u$ with $Q\le\N u<2Q$ and $(u,S)=1$.
Put
\[
                 \psi_u(n)=(u/n)_6,
\]
with the original zero extension, and let $\psi_u^*$ be
its primitive inducing character.
For $Q\ge3$ these rows are nonprincipal.
This indexed family includes characters of orders properly
dividing six.
Define
\begin{equation}\label{eq:density-def}
 \mathcal N_{6,S}(Q;\sigma,T)
 =\sum_{u\in\mathscr U_S(Q)}
   \#\{\rho:L(\rho,\psi_u^*)=0,\
                  \Rea\rho\ge\sigma,\ |\Ima\rho|\le T\},
\end{equation}
with zeros counted with multiplicity.
This count will control the number of rows on which
zeros near the right edge of the critical strip obstruct
cancellation in prime sums.
For a fixed $V\in C_c^\infty((1,2))$, put
\begin{equation}\label{eq:prime-moment-def}
 \mathcal P_{6,S}(Q;V)=
 \sum_{\substack{u\in\mathscr U_S(Q)\\u\ {\rm squarefree}}}
 \left|\sum_{\substack{\pi\ {\rm primary\ prime}\\(\pi,S)=1}}
                 V(\N\pi/Q)(u/\pi)_6\right|^2.
\end{equation}
Both prime ideals above a split rational prime are included.
Thus $\mathcal P_{6,S}(Q;V)$ measures the average squared
size of one fixed smooth prime sum over the squarefree rows.

\begin{theorem}\label{thm:prime-density}
For every fixed $C>0$, $51/100<\sigma\le7/8$, and $\eps>0$,
uniformly for $1\le T\le(\log Q)^C$,
\[
 \mathcal N_{6,S}(Q;\sigma,T)
       \ll_{\sigma,C,\eps,S}Q^{R(\sigma)+\eps},
       \qquad R(\sigma)=\frac{31-32\sigma}{21-12\sigma}.
\]
The same bound holds for the cumulative family
$1<\N u\le Q$.
Moreover, for every fixed $V\in C_c^\infty((1,2))$,
\[
 \mathcal P_{6,S}(Q;V)\ll_{V,\eps,S}Q^{\kappa_6+\eps},
       \qquad
 \kappa_6=\frac{37-10\sqrt6}{6}
                  =2.084183762028\ldots.
\]
\end{theorem}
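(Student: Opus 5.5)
Two things have to be proved: the zero-density estimate with exponent $R(\sigma)$, and the prime-moment bound, which we obtain from it by a zero-free/zero-rich splitting of the rows. For the density estimate we follow the classical Montgomery--Huxley detector scheme. The inputs are the unmarked character moments and the inverse moment from \cite{OAI}, together with the buffered zero detector from the same source.

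\textbf{Step 1: the detector.} Fix a row $u\in\mathscr U_S(Q)$. Let $M_X(s,\psi_u^*)=\sum_{\N n\le X}\mu(n)\psi_u^*(n)\N n^{-s}$ be a mollifier. A zero $\rho$ with $\Rea\rho\ge\sigma$ and $|\Ima\rho|\le T$ forces, by the buffered detector, one of two alternatives.
\begin{itemize}
\item A ``Type I'' event: a dyadic block $\sum_{X<\N n\le Y}a_n\psi_u^*(n)\N n^{-\rho}$ of length in $[X,Y]$ is $\gg 1$, where $a_n=\sum_{d\mid n,\,\N d\le X}\mu(d)$.
\item A ``Type II'' event: $|L(1/2+it,\psi_u^*)M_X(1/2+it)|\gg Y^{\sigma-1/2}$ for some $t$ near $\Ima\rho$.
\end{itemize}
Because $T\le(\log Q)^C$, zeros of $L(s,\psi_u^*)$ can be separated at unit height with only $Q^\eps$ loss. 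The passage from $\psi_u$ to $\psi_u^*$ costs only divisors at $S$ and at the square part of $u$, since $u$ is sixth-power-free.

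\textbf{Step 2: counting rows of each type.}
\begin{itemize}
\item Type I rows are counted via a $2k$-th power moment and the large-sieve bound \eqref{eq:known-upper}, with $A=Q$ and $B=N^{k}$ in the column variable. The relevant term $A^{1/3}B^{5/6}$ produces a count of order $Q^{\eps}\bigl(Q+N^{k}+Q^{5/6}N^{k/3}+Q^{1/3}N^{5k/6}\bigr)N^{k(1-2\sigma)}$.
\item Type II rows are counted with the plain fourth-type moment of $L(1/2,\psi_u^*)$ over the family. This moment is uniform in moving exclusions, by Theorem~\ref{thm:plain-input} and the inverse moment. The mollifier of length $X$ enters through the inverse-moment uniformity.
\end{itemize}
Optimizing $X$, $Y$ and $k$ on $51/100<\sigma\le7/8$ should give $Q^{(31-32\sigma)/(21-12\sigma)+\eps}$. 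The lower endpoint $51/100$ appears to be where the admissible range of the inverse moment stops. The cumulative family $1<\N u\le Q$ then follows by dyadic summation: $R(\sigma)>0$ on this range, so the sum is dominated by the top block.

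\textbf{Step 3: the prime moment.} Apply the explicit formula to the smooth prime sum. For each squarefree $u$,
\[
\sum_\pi V(\N\pi/Q)(u/\pi)_6 \ll \sum_{\rho}Q^{\Rea\rho}(1+|\rho|)^{-A}+Q^{1/2+\eps}.
\]
The non-primitive corrections are negligible. We use no zero-free region beyond $\Rea\rho\le1$ for possible exceptional rows; those are covered by counting.

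Rows with no zero in $\Rea\rho\ge\sigma_0$, $|\Ima\rho|\le(\log Q)^C$, contribute at most $Q\cdot Q^{2\sigma_0+\eps}$. For the remaining rows there are two bounds on their contribution:
\begin{itemize}
\item the trivial pointwise bound $|\cdot|\le Q^{2}$ per row, and
\item the large-sieve bound of Theorem~\ref{thm:optimal}'s companion upper bound \eqref{eq:known-upper}, restricted to the exceptional set of size $\mathcal N$.
\end{itemize}
Decomposing $\sigma$ dyadically, rows with a zero at height $\sigma$ number at most $Q^{R(\sigma)+\eps}$, and each contributes at most $Q^{2\sigma}$. This yields
\[
\mathcal P_{6,S}(Q;V)\ll Q^{\eps}\max_\sigma\min\bigl(Q^{1+2\sigma},\,Q^{R(\sigma)+2\sigma},\,\ldots\bigr).
\]
Here the large-sieve bound $Q^{2}\cdot Q^{1/6}$-type term caps the dense regime.

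Balancing these bounds should produce a quadratic equation whose root gives $\kappa_6=(37-10\sqrt6)/6$. The $\sqrt6$ suggests that the balance point is where $R(\sigma)+2\sigma$ is optimized against a linear bound. Setting the derivative of $(31-32\sigma)/(21-12\sigma)+2\sigma$ to zero gives $(21-12\sigma)^2=6\cdot\mathrm{const}$, which produces $\sqrt6$ as expected.

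\textbf{Main obstacle.} I expect Step 2 to be hardest, namely making the Type II moment uniform in the moving prime exclusions coming from the mollifier. The numerical optimization in Step 3 is routine once $R(\sigma)$ is established.
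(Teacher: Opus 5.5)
\textbf{There are genuine gaps in Steps 2 and 3.} Your final optimization is right, but the density exponent is never derived, and the prime-moment step uses the wrong height and the wrong range of bins.

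\textbf{Step 2.} You assert the exponent $R(\sigma)$ but do not derive it, and the Type~I count you propose cannot reach it. Counting through \eqref{eq:known-upper} is the large-sieve route behind de Faveri's density exponents in the table of the introduction, which this theorem improves. Concretely, a Type~I block of total length $Q^\ell$ gives the count exponent $\max\{1,\ell,5/6+\ell/3,1/3+5\ell/6\}+\ell(1-2\sigma)$. At $\sigma=3/4$ its minimum over $\ell$ is $2/3$ (at $\ell=1$), whereas $R(3/4)=7/12$, so no choice of $X,Y,k$ helps.

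The missing ingredients are the structured moments. The inverse moment \eqref{eq:inverse-moment} has exponent $E(r)=\max\{1,(1+5r)/6\}$, which beats the sieve's $\max\{5/6+r/3,1/3+5r/6\}$ for $1/2<r\le3/2$. The fourth moment \eqref{eq:plain-fourth} of plain polynomials of length $Q^m$, $m\le1/2$, has exponent $1$ where the sieve gives $\max\{1,5/6+2m/3\}$.

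The detector of Lemma~\ref{lem:detector-input} also makes a M\"obius witness and a plain witness large \emph{on the same row}, with $r\le t$, $r+m\ge t$, $m\le1/2$. Chebyshev therefore bounds the rows of a bin carrying a given witness pair by $Q^{\min\{E(r)-\delta r,\,1-2\delta m\}}$ up to small losses, where $\delta=2a-1$. The worst case for $r<1$ is $r=2t/3$, and $t_*=5/(5-2\delta)$ balances it against $r\ge1$. This gives $(15-16\delta)/(15-6\delta)=R(a)$.

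Your either/or Type~I/Type~II dichotomy instead bounds a bin by the larger of two counts. Your Type~II event also involves $L\cdot M_X$, which a fourth moment of $L(1/2,\psi_u^*)$ alone does not control. Finally, $51/100$ is just the fixed bin floor $a_0$, not the edge of the inverse moment's range.

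\textbf{Step 3, height.} Zeros at height $\le(\log Q)^C$ do not control the prime sum. In the explicit formula a zero $\beta+i\gamma$ contributes $Q^\beta|\widehat V(\rho)|\ll_J Q^\beta|\gamma|^{-J}$. So a zero with $\beta$ near $1$ at height $(\log Q)^{C+1}$ contributes about $Q(\log Q)^{-J(C+1)}$, far more than $Q^{\sigma_0}$. Your bound $Q^{1+2\sigma_0+\eps}$ for rows zero-free in $\Rea s\ge\sigma_0$, $|\Ima s|\le(\log Q)^C$ is therefore unjustified.

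You need zero-location control up to a fixed power height $H_1=Q^\tau$, and a bad-row count at that height. This is why the paper bins rows by the maxima $M_j(u)$ over windows $|\Ima\rho|\le3jH_1$ and proves Proposition~\ref{prop:bad-rows} there. It absorbs the detector's height cost by choosing $\tau$ after the other losses. With that in place, your explicit-formula bound is an acceptable substitute for the buffered contour shift of Lemma~\ref{lem:prime-bin}.

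\textbf{Step 3, range of bins.} The bin sum needs a row count for every $a$ up to $1$. The density statement stops at $7/8$, and its formula comes from $t_*\le3/2$, so it is valid only for $a\le11/12$. For higher bins the paper takes $t=3/2$, which forces $r\ge1$ and gives $\RR(a)=2(1-a)$; those bins then contribute $Q^2$. Using only $R(7/8)=2/7$ there gives up to $Q^{2+2/7}$, and the large-sieve cap is $Q^{13/6}$; both exceed $Q^{\kappa_6}$.

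Once these are supplied, your final step is exactly the paper's: maximize $R(a)+2a$ at $a_*=(21-5\sqrt6)/12$, with floor rows contributing $Q^{2.02}$.
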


For this theorem we also use the unmarked inverse moment
and buffered detector in \cite[Lemma 17.6 and Proposition 8.3]{OAI}.
They are stated precisely in Section \ref{sec:zero-density}.
The algebraic row-count function is the unamplified
specialization of the detected-row calculation in \cite{Sang}.
The passage to the prime sum uses a finite-height rectangle.

The direct large-sieve comparison for
\eqref{eq:prime-moment-def} is $Q^{13/6+\eps}$.
Indeed \eqref{eq:known-upper} bounds the squared operator
norm by $Q^{7/6+\eps}$, and the squared norm of the prime
coefficient vector is $O_V(Q)$. Thus Theorem
\ref{thm:prime-density} saves
$13/6-\kappa_6=0.082482904639\ldots$ in that comparison.
Theorem \ref{thm:optimal} shows that the general norm is
sharp, while the prime estimate gives a stronger bound
for these particular coefficients.

For comparison, the exponents of
\cite[Corollary 1.5]{DF} at fixed or polylogarithmic height are
\[
 \begin{array}{c|c|c}
 \sigma&\text{fixed-order sieve density exponent}&R(\sigma)\\ \hline
 2/3&5/6&29/39\\
 3/4&8/11&7/12\\
 5/6&4/7&13/33
 \end{array}
\]
for the same sixth-power-free parameter ordering.

\subsection{Two proof branches and their common framework}

The lower-bound argument constructs a vector with large
row energy and preserves that energy while imposing
squarefree support and changing the column scale.
The prime-moment argument combines two estimates:
rows with far-right zeros are sparse, and the prime sum
is small on rows whose zeros lie farther to the left.
The two branches use the same character family and
plain moment, with different steps after that common input.

The quadratic large sieve has scale $A+B$, up to small
powers, over the rationals \cite{HBq} and number fields
\cite{GL}. Higher-order families have additional terms:
see Heath-Brown's cubic sieve \cite{HB} and the fixed-order
framework of Blomer--Goldmakher--Louvel \cite{BGL}.
Their lower bounds are connected with Gauss-sum bias,
studied in particular in \cite{HBP,DR}.
De Faveri--Dunn--Hoffstein \cite{DDH} proved unconditional
cubic and quartic lower bounds by Rankin--Selberg methods.
Their discussion of higher orders identifies the difficulty
of retaining squarefree Fourier indices and of controlling
the required level correlations.

For the sextic family, the first version of this paper
obtained the matching lower bound in the range
$B^{16/11+\delta}\le A\le B^{2-\delta}$, and hence the
full-cutoff exponent $17/16$.
That argument constructs a squarefree-column Gauss
vector and then transfers its lower bound to squarefree rows.
The present large-sieve branch retains this arithmetic
framework but changes two analytic steps.

First, after row Poisson summation, the local correlation
is decomposed into a divisor-bounded number of one-variable
character products. A second-moment specialization of the
zero-slot plain product estimate bounds the nonexceptional
part uniformly in the moving prime exclusions.
The exceptional frequencies are handled separately.
Their exact divisor structure also permits short row scales.
The resulting Gram main term is a rank-one quadratic form
in the smooth column profile.

Second, we preserve a small-twist Gauss vector in the
heads of the existing row decomposition.
We then subtract two column profiles with the same
Gram main functional. The main terms cancel exactly,
and the remaining squared norm is smaller than the
squarefree-row signal at the unbalanced starting point.
A sequence of adjacent-scale comparisons transports that
signal to the desired column scale.
This uses a main-component comparison of the kind that
also appears in corrected dispersion arguments
\cite[Section 5]{Patterson}.

The structured-coefficient branch is logically separate
from the lower-bound theorem, but uses the same character
conventions and plain moment.
It combines the plain moment with the source inverse
moment and simultaneous witnesses, derives a bad-row count,
and moves a logarithmic-derivative contour only inside
the permitted buffered rectangle.
Proposition \ref{prop:prime-convolution} extends the
result to bounded rough prime convolutions of the type
used in \cite[Section 4]{Patterson}.

\subsection{Analytic inputs and the first-version argument}

The analytic inputs for the two main results and the
first-version result are as follows.
\begin{description}
\item[Original unbalanced bound.]
Finite reciprocity and Gauss identities, two Poisson
summations, and the known cubic and sextic sieve upper
bounds suffice. Appendix \ref{app:unbalanced} gives this
route independently of Theorem \ref{thm:plain-input}.
\item[Optimal full-cutoff bound.]
Theorem \ref{thm:plain-input} supplies the improved
nonexceptional estimate. The uniform Gauss Gram formula,
squarefree-column projection, and centered scale
comparison then prove Theorem \ref{thm:optimal}.
\item[Zero density and prime moments.]
The plain moment is combined with the inverse moment
and buffered detector. The resulting row count and a
finite-height contour shift prove Theorem
\ref{thm:prime-density}.
\end{description}

We use the cited statements of \cite{OAI} in its
30 September 2026 version, including their uniformity
conditions.

\paragraph{Relation to the first version.}
The first version, entitled \emph{A sharp lower bound
for the sextic large sieve}
(\href{https://arxiv.org/abs/2610.17675v1}{arXiv:2610.17675v1}),
established the restricted-range result.
The present version strengthens the range and adds the
structured-coefficient branch. The original argument is retained in Appendix
\ref{app:unbalanced}, using the common local calculations
and row-tail estimates.

\paragraph{Organization.}
Section \ref{sec:setup} collects the common conventions,
known sieve upper bounds, and plain moment.
The large-sieve branch occupies Sections \ref{sec:gram}--%
\ref{sec:optimal-proof}: the Gram formula, column
projection, and full-range lower bound.
The structured-coefficient branch occupies Sections
\ref{sec:zero-density} and \ref{sec:prime-moments}:
zero density first, then prime moments and convolutions.
Appendix \ref{app:unbalanced} recovers the first-version
theorem using the shared arithmetic but no new moment input.

\section{Common framework and analytic inputs}
\label{sec:setup}

\subsection{Characters, fixed exclusions, and ray classes}

For an element or ideal $a$, put $q_a=\N a$.
For elements this is $|a|^2$.
All ideal sums outside $S$ use the primary generator
congruent to one modulo $3$.
For a literal element $x$, write
$\chi_s(x)=(x/(s))_6$, with its zero extension.
Thus $J_6(a,b)=\chi_b(a_{\rm primary})$.
Gauss numerators remain literal elements.
At the primes in $S$, generators are fixed once and
retained as literal numerators whenever those primes
occur in an auxiliary divisor.

We may enlarge $S_0$ to a fixed finite $S$.
This selects a submatrix and gives
\begin{equation}\label{eq:submatrix}
                       \Xi_{6,S_0}(A,B)\ge\Xi_{6,S}(A,B).
\end{equation}
The enlargement will include a sufficiently large fixed
prime cutoff. It never depends on $A,B$ or on a moving
modulus.

Sextic reciprocity in these conventions is
\begin{equation}\label{eq:reciprocity}
 J_6(a,b)=\mathfrak r(a,b)J_6(b,a),\qquad
 \mathfrak r(a,a)=\chi_a(-1),
\end{equation}
where $\mathfrak r$ is a symmetric sign bicharacter of fixed ray
classes. The first identity is used at coprime pairs;
it extends to the others because both symbols vanish.
The phase $\mathfrak r$ itself is defined on all ray classes.
We use \cite[Lemma 4.4]{OAI}.

Choose a fixed $S$-supported modulus $\mathfrak l_0$
carrying the primary condition, $\mathfrak r$, and every character
$n\mapsto\chi_n(h)$ with $h$ supported on $S$, including
unit numerators. Such a modulus exists: these are
finite-order characters ramified only at $S$, and their
exponents may be reduced modulo six on $S$-unit columns.
Let $\mathcal T$ be the full primary unit residue group
modulo $\mathfrak l_0$, and fix a class $\sigma_{\rm ray}$.
In particular every such $S$-supported numerator character
is constant on a column fiber $[s]=\sigma_{\rm ray}$.
This finite refinement of the ray data is made only for
convenience. It does not change the full matrix in
\eqref{eq:xi}, and its size is independent of all large
parameters and moving exclusions.

\subsection{Known arbitrary-coefficient sieve bounds}
\label{subsec:known-sieves}

Let $\Theta_6(A,B)$ denote the squared norm with
sixth-power-free rows and columns, and let $\Xi_3(A,B)$
denote the cubic squared norm with squarefree rows and
columns. An ideal is sixth-power-free if all its prime
exponents are at most five; the unit ideal is included.
The estimates from \cite[Theorem 1.1]{DF} are
\begin{align}
 \Theta_6(A,B)&\ll_\eps(AB)^\eps
       \{A+B+A^{5/6}B^{1/3}+A^{1/3}B^{5/6}\},
       \label{eq:theta6}\\
 \Xi_3(A,B)&\ll_\eps(AB)^\eps
       \{A+B+(AB)^{2/3}\}.
       \label{eq:xi3}
\end{align}
The cubic squarefree case is also \cite[Theorem 2]{HB}.
These are upper bounds for arbitrary coefficient vectors.
They are used both in the row-tail estimates for the
lower-bound branch and in the repeated-prime correction
for structured convolutions.

\begin{remark}[Compatibility of the character families]
\label{rem:family-compatibility}
The literal primary family agrees with the construction
in \cite[Section 2]{DF} for an admissible choice of
representatives. Choose the defining modulus
$\mathfrak c\subseteq(3)$, supported on $S$ and deep
enough at its primes, and choose the primary generator
$m_E$ of each representative ideal. In the defining
presentation $\mathfrak a=(x)E\mathfrak g^6$, with
$x\equiv1\pmod{\mathfrak c}$, the primary generator is
$xm_E\gamma^6$. The presentation can be chosen so that
$\gamma$ is a unit at the column modulus. Thus the
defining symbol is $J_6(a,b)$, including its zero values
at collisions. The identity $J_6(a,b)^2=(a/b)_3$ also
identifies the cubic subfamily used in \eqref{eq:xi3}.
\end{remark}

\subsection{The unmarked plain product moment}

The plain moment controls products of smooth character
sums averaged over rows. Its dependence on moving
prime exclusions will be used after Poisson summation
in the Gauss Gram calculation. In the zero-density
branch, its fourth-moment specialization bounds the
number of rows with a large plain character sum.

In the next theorem, an annular profile means a smooth
function supported in a fixed compact subinterval of
$(0,\infty)$.

\begin{theorem}[Unmarked plain product moment, \cite{OAI}]
\label{thm:plain-input}
Let $Z\ge2$, with all length parameters in prescribed
fixed-power ranges of $Z$. On nonzero element rows
$q_k\ll K_{\rm row}$, let
\[
                    \psi_k(n)=\tau(n)\chi_n(k).
\]
The twist $\tau$ is fixed within the row sum.
The union of the supports of its displayed moving
residue-symbol factors, before cancellations, has
radical norm at most $R$.
Retain all original zero extensions.
An exactly separated redundant zero may be represented
by a fixed squarefree puncture of norm at most $Z^{C_0}$,
for a prescribed $C_0$; this puncture is independent of
the varying row and is the same in both factors.
For
\[
 S_{\psi_k}(N_j;V_j)
       =N_j^{-1/2}\sum_n\psi_k(n)V_j(q_n/N_j),
\]
including that common puncture, one has
\begin{equation}\label{eq:plain-product}
 \sum_{\substack{0<q_k\ll K_{\rm row}\\\psi_k^*\ne1}}
       |S_{\psi_k}(N_1;V_1)S_{\psi_k}(N_2;V_2)|^2
                            \ll_\eps K_{\rm row}RZ^\eps .
\end{equation}
The character $\psi_k^*$ is the primitive inducing
character. There is no zero-slot restriction on the
bounded nonnegative logarithmic lengths of $N_1,N_2$,
and no lower bound on its conductor.
The constants depend on finitely many profile seminorms,
with a fixed polynomial cost in separated norm-twist
heights. They are uniform over the permitted moving
supports, masks, and frozen labels.
\end{theorem}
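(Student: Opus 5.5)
The plan is to obtain the estimate \eqref{eq:plain-product} as the zero-slot specialization of \cite[Lemma 18.1]{OAI}. The real work is a careful matching of hypotheses rather than a new analytic argument.

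First I will rewrite the row family in the source normalization. The rows are nonzero elements $k$ with $q_k\ll K_{\rm row}$. The coefficient $\psi_k(n)=\tau(n)\chi_n(k)$ is, for fixed $k$, a character in $n$, by sextic reciprocity \eqref{eq:reciprocity} together with the fixed ray data carried by $\mathfrak l_0$. The reciprocity phase $\mathfrak r$ is constant on the fixed column fibers, so it can be absorbed into a bounded number of twists. I will then check three points.
\begin{itemize}
\item The moving factors of $\tau$ are residue symbols whose supports have total radical norm at most $R$. This is exactly the source's conductor-inflation parameter, and it is the only place where $R$ enters the bound $K_{\rm row}RZ^\eps$.
\item The common puncture is squarefree, of norm at most $Z^{C_0}$, independent of $k$, and the same in both factors. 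It is therefore an admissible frozen label: removing it by M\"obius inversion over its divisors costs only $Z^{\eps}$, with polynomial dependence on $C_0$.
\item The restriction $\psi_k^*\ne1$ excludes precisely the principal rows, which the source also discards.
\end{itemize}

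Second, I will apply the inequality $|S_1S_2|^2\le\frac12(|S_1|^4+|S_2|^4)$. This is harmless here because the claimed bound is symmetric and imposes no zero-slot restriction relating $N_1$ and $N_2$. It reduces the claim to a fourth moment of a single smooth character sum over the rows.

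For orientation, the mechanism behind that fourth moment is as follows. One squares the sum to get a sum of length $N^2$ with divisor-bounded coefficients. When $N^2\le K_{\rm row}$, the large sieve $\Theta_6$ of \eqref{eq:theta6} handles it directly. When $N$ is large, one first applies the functional equation (Poisson summation in $n$ modulo the conductor, which is $\ll K_{\rm row}R$) to pass to a dual sum of length $K_{\rm row}R/N$.

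This crude route leaves the off-diagonal term $K^{5/6}N^{2/3}$, which near balance is of size $K^{7/6}$. Removing that term is exactly the content of \cite{OAI}, so I will not try to reprove it. I will only confirm that its output has the form $K_{\rm row}RZ^\eps$, with constants depending on finitely many seminorms of $V_1,V_2$.

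The main obstacle I expect is uniformity. The bound must hold uniformly over the moving supports of $\tau$ and over frozen labels that vary with the later Poisson frequencies. The source states its constants with a fixed polynomial cost in separated norm-twist heights. I will therefore track that every height produced by the punctures and by $\mathfrak l_0$ is bounded by a fixed power of $Z$, so that this cost is absorbed into $Z^\eps$ after renaming $\eps$.

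I would first check the degenerate cases where the conductor of $\psi_k^*$ is tiny. The theorem asserts there is no lower bound on the conductor, so such rows must already be covered by the source's treatment of conductor drops, and not by anything done here.
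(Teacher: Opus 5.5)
Your proposal follows the paper's approach: the paper proves this statement only by citation, as the $z=0$ case of \cite[Lemma 18.1, equation (18.2)]{OAI} with $K_{\rm row}=Z^m$, $R=Z^r$, and the fixed ray family absorbing the characters of $\mathcal T$ and the supplementary twists, which is the hypothesis-matching you describe. Your AM--GM reduction to a fourth moment and your M\"obius removal of the puncture are harmless but unnecessary, because that fourth moment is just the $N_1=N_2$ case of the same cited product estimate and the source already admits the common puncture; the paper in fact goes the other way, deriving \eqref{eq:plain-fourth} from the product bound.
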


This is exactly the $z=0$ case of
\cite[Lemma 18.1, equation (18.2)]{OAI}, with
$K_{\rm row}=Z^m$, $R=Z^r$, and effective width $m+r$.
The fixed finite ray family contains the characters of
$\mathcal T$ and the supplementary characters.

\begin{corollary}\label{cor:plain-second}
Under the same conventions,
\begin{equation}\label{eq:plain-second}
 \sum_{\substack{0<q_k\ll K_{\rm row}\\\psi_k^*\ne1}}
       \left|\sum_n\psi_k(n)V(q_n/N)\right|^2
                           \ll_\eps NK_{\rm row}RZ^\eps .
\end{equation}
\end{corollary}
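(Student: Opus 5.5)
We derive \eqref{eq:plain-second} from the product bound \eqref{eq:plain-product} of Theorem \ref{thm:plain-input} by choosing the two slots appropriately. The zero-slot case places no restriction on the bounded nonnegative logarithmic lengths. So we take $N_1=N$ with profile $V_1=V$, and we take the second slot to be a trivial one of length $N_2=1$, where the smooth sum is a fixed nonzero constant.

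\textbf{Option 1: a length-one second slot.} Choose $V_2$ to be an annular profile supported near $q_n=1$, for instance around the unit ideal. Then $S_{\psi_k}(1;V_2)=\psi_k(1)V_2(1)=V_2(1)$, since $\chi_1(k)=1$ and the twist $\tau(1)=1$. This value is independent of $k$ and nonzero. We must check whether the common puncture or the twist could kill the unit ideal.
\begin{itemize}
\item If the puncture is a fixed squarefree ideal, the unit ideal survives, because $1$ is coprime to everything.
\item If instead $\tau(1)$ could vanish, or the annular support condition forbids a profile concentrated at norm $1$, we fall back on Option 2.
\end{itemize}

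\textbf{Option 2: Cauchy--Schwarz.} Apply Cauchy--Schwarz with $N_1=N_2=N$ and $V_1=V_2=V$. This gives
\[
\sum_k|S_{\psi_k}(N;V)|^2\le (\#\{k\})^{1/2}\Big(\sum_k|S_{\psi_k}(N;V)|^4\Big)^{1/2}\ll K_{\rm row}^{1/2}(K_{\rm row}RZ^\eps)^{1/2}.
\]
Here $\#\{k\}\ll K_{\rm row}$, since the rows are nonzero elements of norm $\ll K_{\rm row}$. The result is $K_{\rm row}R^{1/2}Z^\eps$, which is even better than required because $R\ge1$.

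\textbf{Unnormalizing.} Either way, we recall that $S_{\psi_k}(N;V)=N^{-1/2}\sum_n\psi_k(n)V(q_n/N)$. Multiplying the bound by $N$ yields $\sum_k|\sum_n\psi_k(n)V(q_n/N)|^2\ll NK_{\rm row}RZ^\eps$. This is \eqref{eq:plain-second}, with the same restriction $\psi_k^*\ne1$ and the same uniformity in supports, masks and puncture.

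I would present the Cauchy--Schwarz route, since it needs no special profile. The step needing care is confirming that $V$ satisfies the theorem's hypotheses: $V$ must be an annular profile, and $N$ must lie in a prescribed fixed-power range of $Z$. We impose both as the conventions ``under the same conventions'' refer to. The implied constants then depend on seminorms of $V$ only.
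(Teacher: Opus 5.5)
Your Option~1 is precisely the paper's proof. Take $N_2=1$ with a profile supported near one and equal to one there, so the second factor is $\psi_k(1)=1$, then multiply \eqref{eq:plain-product} by $N$. Your hedges there are unnecessary: such a profile is annular, $\tau(1)=1$, and the unit ideal survives every puncture, so no fallback is needed.

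The route you say you would present is genuinely different, and it is also correct. The diagonal specialization $N_1=N_2=N$, $V_1=V_2=V$ is permitted; the paper itself uses it later as \eqref{eq:plain-fourth}. The common puncture is then automatically the same in both factors. Cauchy--Schwarz against the $O(K_{\rm row})$ nonzero rows gives $NK_{\rm row}R^{1/2}Z^{\eps}$, which is stronger than \eqref{eq:plain-second} because $R\ge1$.

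The two routes buy different things. The paper's trivial-slot specialization reproduces the stated $R$-dependence exactly. Its price is the bookkeeping in its proof: fixed ray-fiber restrictions are expanded into characters first, and each mask must be present in the unit-ideal factor, so that the trivial slot is not killed. Your diagonal route never needs the unit ideal to survive a mask, since both factors are literally the same sum. It also saves a factor $R^{1/2}$, though that saving is not used downstream, because \eqref{eq:balance} already absorbs $R_{C,g}$ in Lemma \ref{lem:nonexceptional}.
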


\begin{proof}
Take $N_2=1$ and a fixed profile supported sufficiently
close to one, equal to one there, so that only the unit
ideal occurs. The second factor is $\psi_k(1)=1$,
including every puncture. Multiply \eqref{eq:plain-product}
by $N$.
If a fixed ray-fiber restriction occurs, expand it into
characters before this specialization.
Two separate applications of \eqref{eq:plain-second}
may have different fixed masks; within each application
its mask is also present in its unit-ideal factor.
\end{proof}

\section{A uniform Gauss Gram formula}
\label{sec:gram}

We construct a Gauss-weighted test sum and evaluate its
weighted squared row norm. The resulting main term
supplies the energy for the lower-bound argument;
its dependence on the column profile will also make
comparison between column scales possible.

Use
\[
 e(z)=\exp(4\pi i\,\Ima z/\sqrt3),\qquad
 d\mu(z)=(2/\sqrt3)\,d\Rea z\,d\Ima z,\qquad
 \widehat w(y)=\int_{\mathbb C}w(z)e(-zy)\,d\mu(z).
\]
The lattice $\OO$ is self-dual.
For a primary modulus $s$ outside $S$, put
\[
 G(s,k)=q_s^{-1/2}\sum_{x\bmod s}\chi_s(x)e(kx/s),
                         \qquad G(1,k)=1 .
\]
The row profile $w$ is fixed, radial, nonnegative,
nonzero, smooth, and supported away from zero with
$|z|^2\le1$; also $w\le1$.

Let $\mathfrak d$ be squarefree outside $S$,
$D=\N\mathfrak d$, and
$d_{\mathbf e}=\prod_{p\mid\mathfrak d}p^{e_p}$,
where $e_p\in\{0,\ldots,5\}$.
The pair $(\mathfrak d,\mathbf e)$ records the excluded
primes and residue-symbol twists introduced by arithmetic
decompositions later in the proof.
For a smooth column profile $U$ supported in $(1/2,1)$,
define
\begin{equation}\label{eq:gauss-polynomial}
 F_{\mathfrak d,\mathbf e;U}(m;L)
 =\frac1L
 \sum_{\substack{s\ {\rm primary},\ [s]=\sigma_{\rm ray}\\
                                  (s,S\mathfrak d)=1}}
 U(q_s/L)(q_s/L)^{-1/2}J_6(d_{\mathbf e},s)G(s,-m).
\end{equation}
A superscript ${\rm sf}$ will restrict the columns to
squarefree $s$.
The variable $m$ indexes rows, $L$ is the column norm
scale, and $U$ selects a smooth block of columns.
Use the weighted $\ell^2$ norm
\[
 \|F\|_q^2=
 \sum_{\substack{m\in\OO\setminus\{0\}\\(m,S)=1}}
                         w(m/\sqrt q)|F(m)|^2.
\]
Put
\[
 X=L^2/q,\quad
 \II(U)=\int_0^\infty U(t)t^{-5/6}\,dt,\quad
 \Omega(\mathfrak d,\mathbf e)
  =\prod_{p\mid\mathfrak d}
       \frac{q_p^{-j_p/6}}{(1+q_p^{-1})^2},
 \quad j_p=(-e_p\bmod6).
\]
Here $X$ is the dual norm scale after row Poisson
summation. The functional $\II(U)$ determines the
profile dependence of the Gram main term, while
$\Omega(\mathfrak d,\mathbf e)$ records its local
arithmetic correction. Profiles with the same value
of $\II$ will yield the centered comparison in
Section \ref{sec:optimal-proof}.

\begin{proposition}\label{prop:gram}
For $q,L\ge1$, $X\ge1$, and nonnegative $U$,
\begin{equation}\label{eq:gram}
 \begin{split}
 \|F_{\mathfrak d,\mathbf e;U}(\,\cdot\,;L)\|_q^2
 ={}&C_{S,w}\II(U)^2q^{5/6}L^{-2/3}
                                  \Omega(\mathfrak d,\mathbf e)\\
 &+O_\eps\left((2qLD)^\eps
                     \{(q/L)(1+X^{1/12})+D\}\right),
 \end{split}
\end{equation}
where $C_{S,w}>0$ is independent of
$U,\sigma_{\rm ray},\mathfrak d,\mathbf e$.
The error has finite-seminorm dependence on $U$ and is
uniform on prescribed bounded smooth profile families.
For $X\le1$, the uniform bound is $O(q/L)$.
\end{proposition}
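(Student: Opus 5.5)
We expand the square and sum over the rows $m$ first. Writing out \eqref{eq:gauss-polynomial},
\[
 \|F\|_q^2=\frac1{L^2}\sum_{s_1,s_2}U(q_{s_1}/L)U(q_{s_2}/L)(q_{s_1}q_{s_2}/L^2)^{-1/2}
 J_6(d_{\mathbf e},s_1)\overline{J_6(d_{\mathbf e},s_2)}
 \sum_{(m,S)=1}w(m/\sqrt q)G(s_1,-m)\overline{G(s_2,-m)} .
\]
The condition $(m,S)=1$ is removed by M\"obius inversion over the fixed finite set $S$. Each product $G(s_1,-m)\overline{G(s_2,-m)}$ is periodic in $m$ modulo $[s_1,s_2]$, so Poisson summation in $m$ over the self-dual lattice $\OO$ gives
\[
 q\sum_{h}\widehat w(h\sqrt q/[s_1,s_2])\,\mathcal C(s_1,s_2;h),
\]
where $\mathcal C$ is the normalized complete correlation. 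The dual variable $h$ is effectively restricted to $q_h\ll(2qL)^\eps L^2/q\cdot(\text{gcd factors})$, which is the scale $X$.

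\emph{Step 1: the zero frequency.} Twisted multiplicativity of Gauss sums reduces $\mathcal C(s_1,s_2;0)$ to local computations. It vanishes unless $s_1=s_2$ up to the $S$-normalization, and the diagonal $s_1=s_2=s$ contributes $q\,\widehat w(0)\,q_s^{-1}\cdot L^{-2}\cdot\ldots$. After summing over $s$ this is only $O(q/L)$. This also gives the claim for $X\le1$: when $X\le1$ the nonzero $h$ are negligible by the rapid decay of $\widehat w$, so the bound is $O(q/L)$.

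\emph{Step 2: the main term from $h\ne0$.} For $h\ne0$ we open the Gauss sums and use the sextic Gauss-sum/reciprocity identities \eqref{eq:reciprocity}. This writes $\mathcal C(s_1,s_2;h)$ as a divisor-bounded combination of products of the form $\overline{G(\cdot)}\,\chi_{s_2}(\ast)$ in one free column variable, with the exclusions $\mathfrak d$ and twist $d_{\mathbf e}$ folded into the twist $\tau$. The columns are then summed by a second Poisson (or Mellin) step. The exceptional frequencies are those $h$ which, after factoring, are sixth powers times $S$-units, so that the resulting character in the column variable is principal. On them the column sum has a genuine main term.

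For these frequencies we compute the column sums by Mellin inversion against $U$. The density of Gauss-sum contributions produces the factor $t^{-5/6}$, which after integration gives $\II(U)$, one factor from each column. The local Euler factors at $p\mid\mathfrak d$ together with the twist exponent $j_p$ give $\Omega(\mathfrak d,\mathbf e)$. Summing the exceptional $h$ up to height $X$ against $\widehat w$ yields $C_{S,w}\II(U)^2q^{5/6}L^{-2/3}\Omega$. The constant $C_{S,w}$ is independent of $\sigma_{\rm ray}$ because the ray restriction is expanded over the fixed group $\mathcal T$ and only its trivial character survives in the main term. The remaining error on the exceptional set, including boundary terms in $h$, is bounded by $(2qLD)^\eps\{(q/L)+D\}$, where $D$ accounts for the divisors of $\mathfrak d$.

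\emph{Step 3: nonexceptional frequencies.} These are bounded by Cauchy--Schwarz over $h$ followed by Corollary \ref{cor:plain-second}. The rows are $k\leftrightarrow h$ with $K_{\rm row}\asymp X$, the twist $\tau$ carries the moving exclusions with $R\ll D$, and the column length is $N\asymp L$. This gives the bound $(q/L)X^{1/12}(2qLD)^\eps$.

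\emph{Main obstacle.} The hardest step is this last one: verifying that after the decomposition the twist meets the hypotheses of Theorem \ref{thm:plain-input}, namely a fixed twist per row sum, moving supports of radical norm at most $D$, and punctures common to both factors, uniformly in $\mathfrak d$ and $\mathbf e$. One must also check that the normalization yields exactly $X^{1/12}$ and not a larger power. I would first carry out the balancing with $N=L$ and $K_{\rm row}=X$, and then account for the divisor-bounded number of gcd pieces.
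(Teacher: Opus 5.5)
\textbf{There is a genuine gap in Step 3.} Your outline follows the paper's route: row Poisson summation, a diagonal of size $O(q/L)$, exceptional frequencies supplying the main term through complete means, and nonexceptional frequencies handled by Cauchy--Schwarz and Corollary~\ref{cor:plain-second}. But Step~3 does not work as you set it up, and it is exactly the step that separates \eqref{eq:gram} from the $DX$ error of Lemma~\ref{lem:classical-gram}.

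The correlation $\mathcal C(s_1,s_2;h)$ becomes a product of one-variable characters $n\mapsto\chi_n(k')$ only after several reductions:
\begin{itemize}
\item split off $C=(s_1,s_2)$, writing $s_i=Cn_i$ and $h=Ck$;
\item insert the coprime projector $n_i=\mathfrak b m_i$;
\item fix $g=(\rad C,k)$ with $k=gk'$;
\item freeze a local branch at each prime of $C$.
\end{itemize}
This makes the twist and punctures independent of the row variable $k'$, as Theorem~\ref{thm:plain-input} requires. For such a label the parameters are not the ones you chose:
\begin{itemize}
\item the row length is $\asymp\mathcal RX/(q_C\N g)$;
\item the column lengths are at most $L/(q_Cq_{\mathfrak b})$;
\item the local scalar $w_{C,g}$ can be as large as $q_C$;
\item the moving support is $DR_{C,g}$, since the twist $\chi_n(gd_{\mathbf e}C^\flat)$ carries $g$ and the primes of $C$ with $6\nmid v_p(C)$; it is not $D$.
\end{itemize}
The labels are also not divisor-bounded in number. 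There are $\asymp C_0B_0$ of them on dyadic ranges, with $C_0$ up to $\mathcal RX$.

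If you bound $w_{C,g}\le q_C$ and $R_{C,g}\le q_C$ separately, each label contributes $\ll N\mathcal RXD\,q_C$. Summing over $C$ then gives a total of size $DX$ after the factor $q/L^3$, which is the first version's error and too weak. The missing idea is the local balance \eqref{eq:balance}, $w_{C,g}R_{C,g}/\N g\le q_C$. With it, each label is $\ll N\mathcal RXDZ^\eps$ with $Z=2qLD$. The $C_0B_0$ labels then total $\mathcal R D Z^\eps$, because $qX=L^2$. You also need to decouple the row from the two columns in the weight $H_U(t_1)H_U(t_2)\widehat w_{\rm rad}(q_Cq_k/(Xt_1t_2))$. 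The paper does this by Fourier expansion in the logarithmic column variables, summed over dyadic shells in $\mathcal R$.

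\textbf{Your error bookkeeping is also reversed.} Even with your own parameters $N\asymp L$, $K_{\rm row}\asymp X$, $R\ll D$, Corollary~\ref{cor:plain-second} and Cauchy--Schwarz give
\[
\frac q{L^3}\cdot LXD=D,
\]
not $(q/L)X^{1/12}$. The nonexceptional frequencies are the source of the $D$ term. The $(q/L)X^{1/12}$ term comes from Step~2 instead. There one counts the sixth roots $a$ in $Ck=\epsilon e_S(\mathbf r)d_{\mathbf j}a^6$ up to $Y\asymp X^{1/6}$, and the lattice error $O(Y^{1/2})$ in \eqref{eq:weighted-ideal-count} produces $(q/L)X^{1/12}$. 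The punctured column counts and the divisor completion on the exceptional set cost only $O(Z^\eps q/L)$, and no $D$ arises there.

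A smaller point concerns independence from $\sigma_{\rm ray}$. The ray character that survives in the main term is not the trivial one. It is the $S$-numerator character $n\mapsto\chi_n(\epsilon e_S(\mathbf r))$, which is constant on the fiber and cancels between the two columns. The constant therefore reduces to the class-independent density $\delta_0^2$.
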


The short-row range $q<L$ and the $D$ term are important
for the scale comparison below. The first version's
two-Poisson argument instead gives a $DX$ error and
requires $q/L$ sufficiently large; that estimate is
recovered in Lemma \ref{lem:classical-gram}.
The only use of the plain moment in the present Gram
proof is the nonexceptional estimate of Lemma
\ref{lem:nonexceptional}. The local identities, complete
means, and lattice counts are common to both routes.

\subsection{The first Poisson summation}

Temporarily sum over all element rows, omitting $(m,S)=1$,
and denote the resulting Gram by $\mathcal H(q,L)$.
The omitted zero row has weight zero.
Write $H_U(t)=U(t)t^{-1}$ and
\[
 a(s)=1_{(s,S\mathfrak d)=1,\ [s]=\sigma_{\rm ray}}J_6(d_{\mathbf e},s).
\]
Removing the square-root normalization of $G$ makes
the column coefficient $L^{-3/2}H_U(q_s/L)$.
Opening the finite Gauss sums and applying row Poisson gives
\begin{equation}\label{eq:first-poisson}
 \mathcal H(q,L)=\frac q{L^3}
 \sum_{u,v}a(u)\overline{a(v)}H_U(q_u/L)H_U(q_v/L)
 \sum_{j\in\OO}\mathcal C(u,v;j)
                       \widehat w(\sqrt q\,j/(uv)),
\end{equation}
where
\[
 \mathcal C(u,v;j)=
 \sum_{\substack{x\bmod u,\ y\bmod v\\vx-uy\equiv j\ (uv)}}
                             \chi_u(x)\overline{\chi_v(y)}.
\]
At $j=0$, the unit conditions imply $u\mid v$ and $v\mid u$.
Thus $u=v$, the correlation is $\varphi(u)$, and the
contribution is $O(q/L)$.

For nonzero frequencies, let $C=(u,v)$, $u=Cn_1$, $v=Cn_2$.
The correlation vanishes unless $j=Ck$.
The finite correlation identity of
\cite[Lemma 13.3]{OAI}, with its local factors evaluated
below, is
\begin{equation}\label{eq:finite-correlation}
 \mathcal C(Cn_1,Cn_2;Ck)
 =\chi_{n_1}(k)\overline{\chi_{n_2}(-k)}
                            \mathfrak r(n_1,n_2)L_C(n_1,n_2;k).
\end{equation}
For $p^c\parallel C$, $P=q_p$, the unit--unit local factor is
\begin{equation}\label{eq:local-correlation}
 P^{c-1}\chi_p(n_1)^c\overline{\chi_p(n_2)^c}
 \begin{cases}
 P-1,&p\mid k,\\
 -1,&p\nmid k,\ 6\nmid c,\\
 P-2,&p\nmid k,\ 6\mid c.
 \end{cases}
\end{equation}
If $p$ divides exactly one column, its value is
$P^{c-1}(P-1)1_{6\mid c}1_{p\nmid k}$.
To see these formulas, reduce the defining congruence
$n_2x-n_1y\equiv k\pmod{p^c}$ modulo $p$.
There are $P^{c-1}$ lifts.
For units and $p\nmid k$, the remaining character sum
is the sum over the nonzero field elements other than one;
for $p\mid k$, the two units are proportional.
The one-nonunit branch has a nonzero complete character
sum only for $6\mid c$.

Extend the product of these formulas to all residual
columns by assigning zero when a prime of $C$ divides both.
This extension agrees with the original correlation on
$(n_1,n_2)=1$.
Insert the exact projector
\[
 1_{(n_1,n_2)=1}
       =\sum_{\mathfrak b\mid n_1,n_2}\mu(\mathfrak b),
                         \qquad n_i=\mathfrak b m_i .
\]
Nonzero terms have $(\mathfrak b,SCk\mathfrak d)=1$.
No restriction $(m_i,\mathfrak b)=1$ is inserted.
The residual columns have the same fixed ray class, so
$\mathfrak r(n_1,n_2)\overline{\chi_{n_2}(-1)}=1$.
All common $C,\mathfrak b$ character factors cancel with
their conjugates.

\subsection{Tensor branches and the nonexceptional estimate}

Fix $g=(\rad C,k)$ and write the literal element $k=gk'$.
At each prime of $C$, choose the unit--unit branch or
one of the two permitted one-nonunit branches.
There are at most $3^{\omega(C)}$ choices.
If $f_i$ is the squarefree product of primes assigned
to the nonunit branch in column $i$, substitute
$m_i=f_i z_i$. These primes have $6\mid c$ and do not
divide $k$. No further unit condition at such a prime
is imposed on $z_i$ without an original reason.

A fixed factor $\chi_p(n)^c$ with $6\mid c$ is exactly
the indicator $1_{p\nmid n}$.
Transfer it to its fixed branch mask, retaining all
surviving phases. The other character factors, after
reciprocity and ray-fiber expansion, have the form
\begin{equation}\label{eq:branch-character}
 \nu_i(n)\chi_n(k')\chi_n(gd_{\mathbf e}C^\flat)
                         1_{(n,P_i)=1},
 \qquad
 C^\flat=\prod_{\substack{p^c\parallel C\\6\nmid c}}p^{c\bmod6}.
\end{equation}
The $\nu_i$ belong to one fixed ray family.
For this frozen branch the punctures $P_i$ are independent
of $k'$. Their norms are bounded by a fixed power of the
ambient parameter. The substitutions also give row scalars
of modulus at most one, not new inner arithmetic coefficients.

Define
\[
 R_{C,g}=\N\rad\!\left(g
              \prod_{\substack{p^c\parallel C\\6\nmid c}}p\right).
\]
The moving support in \eqref{eq:branch-character} is at most
$D R_{C,g}$. If $w_{C,g}$ denotes the absolute local
correlation scalar of the branch, then
\begin{equation}\label{eq:balance}
                         w_{C,g}R_{C,g}/\N g\le q_C .
\end{equation}
Indeed the local upper bounds are
\[
 \begin{array}{c|c|c|c}
 \text{case}&w_p&(R_{C,g})_p&(\N g)_p\\ \hline
 6\nmid c,\ p\nmid k&P^{c-1}&P&1\\
 6\nmid c,\ p\mid k&P^c&P&P\\
 6\mid c,\ p\nmid k&P^c&1&1\\
 6\mid c,\ p\mid k&P^c&P&P
 \end{array}
\]
and the nonunit branches occur only in the third line.

Call a frequency exceptional when
\begin{equation}\label{eq:exceptional}
 6\mid v_p(Ck)\quad(p\notin S\mathfrak d),\qquad
 v_p(k)+e_p\equiv0\pmod6\quad(p\mid\mathfrak d).
\end{equation}
On the refined ray fibers all remaining $S$-supported
characters are constant. Thus \eqref{eq:exceptional}
is exactly the condition for a nonzero complete column mean.
For every other frequency, a good prime contributes a
nontrivial local character to every term of the fixed
ray expansion in \eqref{eq:branch-character}.
Its inducing character is therefore nonprincipal.

\begin{lemma}\label{lem:nonexceptional}
The full nonexceptional contribution to
\eqref{eq:first-poisson} is
$O_\eps(D(2qLD)^\eps)$.
\end{lemma}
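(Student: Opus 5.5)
We would bound the nonexceptional part of \eqref{eq:first-poisson} one frozen configuration at a time. Write $u=C\mathfrak b f_1z_1$, $v=C\mathfrak b f_2z_2$ and $j=Ck=Cgk'$. Then \eqref{eq:finite-correlation}, the Möbius projector and the tensor-branch decomposition express the correlation as the local scalar, of size at most $w_{C,g}$, times a product of two one-variable factors of the shape \eqref{eq:branch-character}, one in $z_1$ and one in $z_2$. Both factors are evaluated at the same literal frequency $k'$, with moving support of radical norm at most $DR_{C,g}$. The sizes are as follows. The coefficient is $qL^{-3}$, and $q_u,q_v\asymp L$. The transform $\widehat w(\sqrt q\,Ck/(uv))$ has rapid decay, which truncates to $q_k\ll L^2(qq_C)^{-1}(qL)^\eps$. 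Since $w$ is radial, $\widehat w$ depends only on $|Ck|^2/(q_uq_v)$. We would therefore separate variables: a Mellin expansion of $\widehat w$ in $q_{z_1}$ and $q_{z_2}$, or equivalently dyadic profiles $V_1,V_2$ with $N_i\asymp L/(q_Cq_{\mathfrak b}q_{f_i})$, costs only a $(qL)^\eps$ integral with finitely many profile seminorms. The $(u,v)$-sum then becomes, for each frozen $(C,g,\mathfrak b,f_1,f_2)$ and each row $k'$, a product $\Sigma_1(k')\overline{\Sigma_2(k')}$ of smooth character sums over $z_1,z_2$ of exactly the kind in Theorem \ref{thm:plain-input}, with row variable $k'$ and $K_{\rm row}\asymp L^2/(q q_C\N g)$.

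For each configuration we would apply Cauchy--Schwarz over $k'$ and then Corollary \ref{cor:plain-second} to each factor. The nonexceptional hypothesis guarantees $\psi_{k'}^*\ne1$ for every term of the fixed ray expansion, so the corollary applies. The puncture $1_{(n,P_i)=1}$ and the branch masks are fixed and independent of $k'$, which is precisely the permitted common puncture. This gives
\[
 \sum_{k'}|\Sigma_1\Sigma_2|\ll (N_1N_2)^{1/2}K_{\rm row}\,DR_{C,g}\,Z^\eps .
\]
Multiplying by $qL^{-3}$, by $w_{C,g}$ and by the column-profile weights, and using \eqref{eq:balance} in the form $w_{C,g}R_{C,g}/\N g\le q_C$, the contribution becomes
\[
 \ll \frac q{L^3}\cdot\frac{L}{q_Cq_{\mathfrak b}}\cdot\frac{L^2}{qq_C\N g}\cdot Dw_{C,g}R_{C,g}Z^\eps\ll \frac{D\,Z^\eps}{q_Cq_{\mathfrak b}}.
\]
The remaining sums run over $C,\mathfrak b$ (with $q_C,q_{\mathfrak b}\ll L$), over $g\mid\rad C$, over $f_i\mid C$, and over the at most $3^{\omega(C)}$ branches. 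These are harmonic sums weighted by divisor functions, so they cost $(2qLD)^\eps$, which gives the claimed $O_\eps(D(2qLD)^\eps)$. We would take $Z=2qLD$; all lengths are then fixed powers of $Z$, as Theorem \ref{thm:plain-input} requires.

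The step we expect to be the main obstacle is the bookkeeping that makes Theorem \ref{thm:plain-input} applicable \emph{uniformly}, and we would settle it first. Three points need checking. First, after the substitutions $m_i=f_iz_i$ and the Möbius expansion, the twist must still be a single fixed $\tau$ within the $k'$-sum, with moving support bounded by $DR_{C,g}$. Second, the punctures must be the same in both factors; if they differ, we would pass to their union, which only adds exactly separated redundant zeros. Third, the frozen labels $(C,g,\mathfrak b,f_i)$ must enter only through fixed data, with polynomially bounded heights. A further subtlety is that the frequency sum can include $k'$ with $q_{k'}$ much smaller than the truncation, and also the short-row regime $X<1$. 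In both cases the row sum is simply shorter, and the bound $K_{\rm row}\ll1+L^2/(qq_C\N g)$ still suffices: when $K_{\rm row}$ is of size $1$ we use the trivial estimate $\ll N_iDR$, whose contribution is again absorbed into $D(2qLD)^\eps$ after \eqref{eq:balance}.
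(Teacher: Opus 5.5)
Your proposal is correct and follows the paper's proof essentially step for step. The shared steps are: separate the radial coupling $\widehat w$ in the two column variables, take a uniform majorant outside the $k'$-sum, apply Cauchy--Schwarz in the row, make two separate applications of Corollary~\ref{cor:plain-second} with row length $\mathcal R X/(q_C\N g)$ and moving support $DR_{C,g}$, and use \eqref{eq:balance} to get $DZ^\eps/(q_Cq_{\mathfrak b})$ per frozen configuration. The only difference is that the paper uses Fourier expansion in logarithmic variables on dyadic shells $q_Cq_k\asymp\mathcal R X$ with $\mathcal R\le Z^2$ and a trivial tail bound, instead of your single truncation at $XZ^\eps$ with a Mellin expansion; the two are equivalent.

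Two small corrections to your bookkeeping remarks:
\begin{itemize}
\item Matching punctures is never needed. After Cauchy--Schwarz, each factor is paired only with the unit ideal in its own application of the corollary, and the paper explicitly notes that the two masks may differ. Your fallback of passing to the union of punctures would not be valid anyway, because it changes the sums.
\item The ``$1+$'' in your bound $K_{\rm row}\ll 1+L^2/(qq_C\N g)$ is harmless only because a nonempty configuration forces $q_C\N g\ll XZ^\eps$.
\end{itemize}
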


\begin{proof}
Put $Z=2qLD$. Use the ball $q_Cq_k\ll X$ and outer
dyadic shells $q_Cq_k\asymp\mathcal R X$, $\mathcal R\ge2$;
write $\mathcal R=1$ for the ball.
The smooth two-column profile is
\[
 H_U(t_1)H_U(t_2)
       \widehat w_{\rm rad}(q_Cq_k/(Xt_1t_2)).
\]
Every prescribed collection of column derivatives has
arbitrary decay in $\mathcal R$ on the outer shells.
Fourier-expand in the two logarithmic column variables,
using fixed annular cutoffs for the inner polynomials.
The coefficient for each varying $k$ has a common bound
\[
 C_{J,B}\mathcal R^{-B}
                    (1+|\xi_1|+|\xi_2|)^{-J}.
\]
Take this majorant outside the row sum, then use
Cauchy--Schwarz in the row.
The height cost in \eqref{eq:plain-second} is integrable
after $J$ is chosen sufficiently large.
These extra derivatives concern the external coupling
profile.

For fixed $C,\mathfrak b,g$ and branch, put
$N=L/(q_Cq_{\mathfrak b})$ and
$K_{\rm row}\asymp\mathcal R X/q_C$.
The two remaining column scales are $N_i=N/\N f_i$,
and the $k'$ row length is $O(K_{\rm row}/\N g)$.
Two separate applications of \eqref{eq:plain-second}
give a bound
\[
 w_{C,g}\sqrt{N_1N_2}\frac{K_{\rm row}}{\N g}D R_{C,g}Z^\eps
                                  \le N\mathcal R XDZ^\eps,
\]
by \eqref{eq:balance}.
Every ray term is nonprincipal here.
Its own fixed mask is used with the unit-ideal factor;
the two applications need not have the same mask.
Row restrictions such as $(k,\mathfrak b)=1$ are dropped
only in the resulting nonnegative moments.
A nonempty subunit row or column scale is comparable
to one and can be rescaled with fixed constants.

On $q_C\asymp C_0$, $q_{\mathfrak b}\asymp B_0$ there
are $O(C_0B_0)$ pairs of frozen labels.
Including $q/L^3$, their bound is
\[
                  \frac q{L^3}C_0B_0
                  \frac L{C_0B_0}\mathcal R XD
                                  =\mathcal R D .
\]
Branch counts, $g$ choices, and dyadic pairs cost a
small power; shell decay makes the sum convergent.

First retain $\mathcal R\le Z^2$, so every scale and mask
is in a fixed-power range of $Z$. This remains true for
$q<L$, since $X\le L^2$.
Before the gcd projector, the omitted shell has
$O(L^2)$ column pairs, correlation bounded by $q_C$,
and $O(\mathcal R X/q_C)$ frequencies.
Its absolute contribution is
$O_B(L\mathcal R^{1-B})$.
Arbitrary Fourier decay makes the discarded tail negligible.
\end{proof}

\subsection{The exceptional column-mean error}
\label{subsec:exceptional-error}

Fix an exceptional frequency before applying the next
estimate. The natural zeros at its primes may enter this
pointwise count.

On its ray fiber, each one-variable branch is a constant
phase times a coprimality mask, possibly after $m_i=f_i z_i$.
For a squarefree puncture $P$ outside $S$, M\"obius
inversion and elementary planar lattice counting give
\begin{equation}\label{eq:punctured-count}
 \sum_{\substack{n\equiv\rho\ (\mathfrak l_0)\\(n,P)=1}}
                         V(q_n/N)
 =\text{complete-density integral}
                  +O_V(N^{1/2}d(P)).
\end{equation}
For a divisor $a\mid P$, division by $a$ leaves a
fixed-modulus ray fiber at scale $N/q_a$.
Its error is $O((N/q_a)^{1/2}+1)$.
For subunit scales the sum is empty and its area term
has the required bound. This proves \eqref{eq:punctured-count}
uniformly in $P$.

For two column scales $N_1,N_2$, the error after separation
is bounded by
\[
 (N_1N_2^{1/2}+N_1^{1/2}N_2+N_1^{1/2}N_2^{1/2})Z^\eps.
\]
On the original column support, $N_i\le N$ and $N\ge1$,
where $N=L/(q_Cq_{\mathfrak b})$.
The error of one branch is therefore
\[
                   O_B(\mathcal R^{-B}w_{C,g}N^{3/2}Z^\eps).
\]
Using $w_{C,g}\le q_C$ and multiplying by $q/L^3$
gives
\begin{equation}\label{eq:exception-tuple-error}
 O_B\left(\mathcal R^{-B}Z^\eps
           qL^{-3/2}q_C^{-1/2}q_{\mathfrak b}^{-3/2}\right).
\end{equation}
The sum over $\mathfrak b$ is convergent.

Fix generators at the primes of $S$.
The exceptional condition expresses the literal product as
\begin{equation}\label{eq:sixth-root}
 Ck=\epsilon e_S(\mathbf r)d_{\mathbf j}a^6,\qquad
 d_{\mathbf j}=\prod_{p\mid\mathfrak d}p^{j_p},\quad
 e_S(\mathbf r)=\prod_{p\in S}\pi_p^{r_p},
\end{equation}
where $\epsilon\in\OO^\times$, $0\le r_p\le5$,
and $a$ represents an integral ideal, including possible
factors at $S$. All six units and these fixed $S$-classes
are allowed by our ray refinement.
Moreover $C\mid a^6$ and $(C,S\mathfrak d)=1$.
On the current shell, $\N a\ll(\mathcal R X)^{1/6}$ and
\[
                     \sum_{C\mid a^6}q_C^{-1/2}
                                 \le d(a^6)\ll_\eta(\N a)^\eta .
\]
Summing \eqref{eq:exception-tuple-error} therefore gives
\begin{equation}\label{eq:exception-error}
                 O_\eps(Z^\eps qL^{-3/2}X^{1/6})
                                      \ll Z^\eps q/L ,
\end{equation}
because $L^{-1/2}X^{1/6}=(qL)^{-1/6}\le1$.

\subsection{Complete means and their sum}
\label{subsec:complete-means}

We include the mean calculation to identify both the moving
local factors and the profile dependence of the main term.
Let $c=v_p(C)$, $j=v_p(k)$, and $P=q_p$.
At $p\notin S\mathfrak d$, the normalized complete
two-column mean of the \emph{full} coefficient is
\[
 \mathfrak m_p(c,j)=
 \begin{cases}
 P^c(1-P^{-1})^2,&c>0,\ j=0,\ 6\mid c,\\
 P^c(1-P^{-1})^3,&c>0,\ j>0,\ 6\mid(c+j),\\
 0,&c>0\text{ otherwise}.
 \end{cases}
\]
For $c=0$ it is one if $j=0$, and
$(1-P^{-1})^2 1_{6\mid j}$ if $j>0$.
These follow by averaging \eqref{eq:local-correlation}
with the numerator characters. For $j=0$, $6\mid c$,
the unit--unit and one-nonunit counts add as
$(P-2)+2=P$; for $j>0$ the numerator characters
force both columns to be units.
At $p\mid\mathfrak d$, the mean is
$(1-P^{-1})^2 1_{j+e_p\equiv0\ (6)}$.

On the fixed ray fibers the $S$-part of the character
is constant and cancels against its conjugate.
Write $\delta_0=1/\N\mathfrak l_0$ for the lattice density
of one of these fixed residue classes.
The fixed-place mean is $\delta_0^2$.

Let $w(z)=w_{\rm rad}(|z|^2)$ and define
\[
 \mathcal J_U(x)=
 \left(\frac{2\pi}{\sqrt3}\right)^2
 \int_0^\infty\!\!\int_0^\infty
     U(t_1)U(t_2)(t_1t_2)^{-1}
            \widehat w_{\rm rad}(x/(t_1t_2))\,dt_1dt_2.
\]
After scaling the columns, the outside factor of each
complete mean is
$(q/L)\mu(\mathfrak b)q_C^{-2}q_{\mathfrak b}^{-2}$.
Before completing its divisor ranges, retain the exact
condition \eqref{eq:sixth-root}.
Since the complete correlation mean is at most $q_C$,
the added tail $q_Cq_{\mathfrak b}\gg L$ has, for each $a$,
the bound
\[
 \sum_{C\mid a^6}q_C^{-1}
             \sum_{q_{\mathfrak b}\gg L/q_C}q_{\mathfrak b}^{-2}
 \ll\sum_{C\mid a^6}q_C^{-1}\min\{1,q_C/L\}
 \ll d(a^6)/L.
\]
Thus the full completion costs
\begin{equation}\label{eq:mean-completion}
             O_\eps(Z^\eps(q/L^2)X^{1/6})\ll Z^\eps q/L.
\end{equation}
This bound, unlike a crude completion using $X/L\ll1$,
also covers short row scales.

For a good prime with total exponent $6j>0$ in $Ck$,
the exact sum over its $C$-valuation is
\[
 (1-P^{-1})^2
 \left(1+P^{-6j}+(1-P^{-1})\sum_{c=1}^{6j-1}P^{-c}\right)
                           =(1-P^{-1})(1-P^{-2}).
\]
The coprime-projector sum has baseline factor $1-P^{-2}$.
Consequently, writing
\[
 \eta_{\mathfrak d}=\prod_{p\mid\mathfrak d}(1-P^{-1})^2,\quad
 A_{\mathbf j}=\N d_{\mathbf j},\quad
 g_{S\mathfrak d}(a)=
       \prod_{\substack{p\mid a\\p\notin S\mathfrak d}}(1-P^{-1}),
\]
the completed mean is
\begin{equation}\label{eq:completed-mean}
 \frac qL\frac{\delta_0^2\eta_{\mathfrak d}}
                  {\zeta_K^{S\mathfrak d}(2)}
 \sum_{\epsilon,\mathbf r}\sum_a g_{S\mathfrak d}(a)
 \mathcal J_U\left(
     \frac{A_{\mathbf j}\N e_S(\mathbf r)(\N a)^6}{X}\right).
\end{equation}
Here $\zeta_K^{S\mathfrak d}$ deletes the indicated Euler
factors. The ideal $a$ is unrestricted at the primes
of $\mathfrak d$ as well as at $S$.

Let $r_K=\operatorname*{Res}_{s=1}\zeta_K(s)$.
Uniformly in $\mathfrak d$ and $Y>0$,
\begin{equation}\label{eq:weighted-ideal-count}
 \sum_{\N a\le Y}g_{S\mathfrak d}(a)
                =\frac{r_K}{\zeta_K^{S\mathfrak d}(2)}Y
                                      +O_K(Y^{1/2}).
\end{equation}
Indeed expand $g(a)$ as
$\sum_{b\mid a,(b,S\mathfrak d)=1}\mu(b)/\N b$ and use
the elementary ideal count $r_KY+O(Y^{1/2})$.
The errors are bounded by
$Y^{1/2}\sum_b(\N b)^{-3/2}$ independently of
the exclusions; completing the main sum costs $O(1)$.
For $Y<1$ the assertion follows directly.

Partial summation in \eqref{eq:completed-mean} now gives
an error $O((q/L)X^{1/12})$.
The moving main factors simplify by
\[
        \frac{(1-P^{-1})^2}{(1-P^{-2})^2}
                                  =(1+P^{-1})^{-2}.
\]
The root scale contributes $A_{\mathbf j}^{-1/6}$,
and $(q/L)X^{1/6}=q^{5/6}L^{-2/3}$.
Finally,
\[
 \frac16\int_0^\infty\mathcal J_U(x)x^{-5/6}\,dx
     =K_w\II(U)^2,\qquad
 K_w=\frac16\left(\frac{2\pi}{\sqrt3}\right)^2
          \int_0^\infty\widehat w_{\rm rad}(t)t^{-5/6}\,dt.
\]
The last integral is positive. The Gaussian integral
representation of the Riesz kernel identifies it with
a positive constant times
$\int_0^\infty w_{\rm rad}(u)u^{-1/6}\,du>0$.

The unrestricted-row coefficient is therefore
\[
 C_{S,w}^{\,0}
  =\frac{\delta_0^2r_K K_w}{(\zeta_K^S(2))^2}
              \sum_{\epsilon,\mathbf r}
                         (\N e_S(\mathbf r))^{-1/6}>0,
\]
independent of $U,\sigma_{\rm ray},\mathfrak d,\mathbf e$.
Together with Lemma \ref{lem:nonexceptional},
\eqref{eq:exception-error}, the diagonal, and
\eqref{eq:mean-completion}, this proves the claimed
formula for $\mathcal H(q,L)$.

\subsection{Fixed row exclusions and short columns}

For $h$ supported on $S$, the coefficient
$\overline{\chi_s(h)}$ is constant of modulus one on
the column class. The identity
$G(s,-hm)=\overline{\chi_s(h)}G(s,-m)$ therefore shows
that its auxiliary Gram is exactly $\mathcal H(q,L)$.
Finite M\"obius inversion, and radiality of $w$, give
\[
 \|F_{\mathfrak d,\mathbf e;U}\|_q^2
       =\sum_{h\mid\prod_{p\in S}p}\mu(h)\mathcal H(q/\N h,L).
\]
The projected main coefficient is
\[
                  C_{S,w}=C_{S,w}^{\,0}
                                \prod_{p\in S}(1-P^{-5/6})>0.
\]
For $q/\N h\ge1$, apply the unrestricted formula.
For $q/\N h<1$ the nonzero row sum is empty; its proposed
main term is bounded and can be absorbed in the $D$ error.
The finite projector thus proves \eqref{eq:gram} for
all $q,L\ge1$ with $X\ge1$.

If $X\le1$, open each Gauss sum as a sum over reduced
fractions $x/s$ on the complex torus.
They have separation $\gg L^{-1}$ and total squared
coefficient mass $O_U(L^{-1})$.
Poisson summation and a packing bound give a Fourier
Gram row sum $O_w(q+L^2)$.
Schur's inequality therefore gives
$(q+L^2)O_U(L^{-1})\ll q/L$.
Moving twists have modulus at most one and the column
restrictions only remove coefficients.
The same upper bound holds for real signed profiles.

Every separation and counting error above uses finitely
many seminorms. The moment height order is fixed before
choosing external Fourier decay orders.
Hence the stated profile uniformity holds, completing
the proof of Proposition \ref{prop:gram}.

\section{Squarefree columns}
\label{sec:columns}

We next retain the squarefree column support using an exact
arithmetic decomposition common to both Gram estimates.
We first combine it with Proposition
\ref{prop:gram}; Appendix \ref{app:unbalanced} combines
it with the two-Poisson estimate instead.

\begin{lemma}[Prime-power Gauss factors]\label{lem:prime-power}
Let $p\notin S$, $P=\N p$, and $a\ge1$.
If $6\nmid a$, then $G(p^a,-m)$ vanishes unless
$v_p(m)=a-1$. For $m=p^{a-1}u$, $p\nmid u$, its
absolute value is $P^{(a-1)/2}$ and its dependence on $u$
is a character of modulus one.
If $6\mid a$, then
\[
 G(p^a,-m)=P^{a/2}1_{p^a\mid m}
                    -P^{a/2-1}1_{p^{a-1}\mid m}.
\]
\end{lemma}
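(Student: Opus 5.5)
The plan is to evaluate $G(p^a,-m)$ directly from its definition,
\[
 G(p^a,-m)=P^{-a/2}\sum_{x\bmod p^a}\chi_{p^a}(x)e(-mx/p^a),
\]
using $\chi_{p^a}(x)=\chi_p(x)^a$. Two facts drive everything: this character depends only on $x\bmod p$ and vanishes on $p\mid x$; and it is trivial on units exactly when $6\mid a$. The second fact holds because $\chi_p$ has exact order six, since $\OO/p$ contains the sixth roots of unity for $p\notin S$.

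The main step is the standard reduction modulo $p$. Write $x=x_0+p y$ with $x_0$ running over the units modulo $p$ and $y\bmod p^{a-1}$. The character depends only on $x_0$, so the inner sum over $y$ is
\[
 \sum_{y\bmod p^{a-1}}e(-my/p^{a-1}).
\]
This equals $P^{a-1}$ if $p^{a-1}\mid m$ and zero otherwise, using the self-duality of $\OO$, so that $e(\cdot/p^{a-1})$ is a nondegenerate pairing on $\OO/p^{a-1}$. Hence $G$ vanishes unless $m=p^{a-1}u$. In that case
\[
 G(p^a,-m)=P^{-a/2}P^{a-1}\sum_{x_0\in(\OO/p)^\times}\chi_p(x_0)^a\,e(-ux_0/p).
\]

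Case $6\mid a$. The character is the principal character on units. The remaining sum is $P-1$ if $p\mid u$, and $-1$ otherwise. This gives
\[
 P^{a/2-1}\bigl(P\,1_{p^a\mid m}-1_{p^{a-1}\mid m}\bigr),
\]
which is the stated formula. Note that the $p\mid u$ case is exactly $p^a\mid m$.

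Case $6\nmid a$. Now $\chi_p^a$ is a nontrivial character of the field $\OO/p$. If $p\mid u$, the sum is a complete character sum and vanishes, so indeed $v_p(m)=a-1$ is forced. If $p\nmid u$, substitute $x_0\mapsto \overline{u}x_0$. This shows the sum equals
\[
 \overline{\chi_p(-u)}^{\,a}\,\tau(\chi_p^a),
\]
where $\tau$ is a Gauss sum over $\OO/p$ with the additive character $e(\cdot/p)$. That additive character is nontrivial by self-duality. The classical evaluation $|\tau|=P^{1/2}$ then gives
\[
 |G|=P^{-a/2}P^{a-1}P^{1/2}=P^{(a-1)/2}.
\]
The dependence on $u$ is the modulus-one character $u\mapsto\overline{\chi_p(-u)}^{\,a}$.

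The only point needing care is the claim that the character is trivial exactly when $6\mid a$. This requires the order of $\chi_p$ to be exactly six. That holds because $q_p\equiv1\pmod 6$ for $p\nmid 6$, and the sixth power residue symbol is surjective onto $\mu_6$. No earlier results from the paper are needed; everything else is routine.
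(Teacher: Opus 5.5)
Your proof is correct and follows the same route as the paper: write $x=x_0+py$, use additive orthogonality over $y\bmod p^{a-1}$ to force $p^{a-1}\mid m$, and then evaluate the remaining sum over $\OO/p$. When $6\nmid a$ that sum is a primitive field Gauss sum, and when $6\mid a$ it is the principal-character sum with values $P-1$ and $-1$. Your extra justifications (that $\chi_p$ has exact order six because $q_p\equiv1\pmod 6$, that self-duality of $\OO$ makes $e(\cdot/p^{a-1})$ nondegenerate, and the explicit $u$-dependence $\overline{\chi_p(-u)}^{\,a}$) only spell out what the paper leaves implicit.
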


\begin{proof}
Write a residue modulo $p^a$ as $x_0+py$.
Additive orthogonality forces $p^{a-1}\mid m$ and leaves
$P^{a/2-1}\sum_{x_0\bmod p}\chi_p(x_0)^a e(-ux_0/p)$.
For a nonprincipal local character this is a primitive
field Gauss sum, nonzero only for $p\nmid u$.
For a principal local character its values are $P-1$
and $-1$ according as $p\mid u$ or not.
\end{proof}

An ideal is powerful if every positive prime exponent
is at least two; the unit ideal is included.
Expanding $\mu^2(s)=\sum_{d^2\mid s}\mu(d)$ and assigning
to $t$ the entire part of $s$ supported on $d$ gives
an exact sum over powerful $t$, with coefficient $\mu(\rad t)$.
The remaining modulus $s_0=s/t$ is coprime to $\rad t$.
There is no added squarefree condition on $s_0$.
Gauss CRT gives
\[
 G(ts_0,-m)=\mathfrak r(t,s_0)\chi_{s_0}(t)^2G(t,-m)G(s_0,-m).
\]
The reciprocity factor is constant on the remaining
ray fiber. The normalization contributes $T^{-1}$,
where $T=\N t$.
An external $\mathfrak d$ remains coprime to $t$;
its exponents are unchanged, and its factor on $t$
is a scalar of modulus one.

At $p^a\parallel t$, Lemma \ref{lem:prime-power} gives
the following components. The squared scalar includes
the column normalization $P^{-a}$ before squaring:
\begin{equation}\label{eq:component-table}
 \begin{array}{c|c|c|c}
 \text{case}&\text{row divisor}&\text{squared scalar}&
                       \text{remaining twist exponent}\\ \hline
 6\nmid a&p^{a-1}&P^{-a-1}&a+1\pmod6\\
 6\mid a,\ \text{boundary}&p^{a-1}&P^{-a-2}&1\\
 6\mid a,\ \text{bulk}&p^a&P^{-a}(1-P^{-1})^2&0
 \end{array}
\end{equation}
The boundary and nonprincipal cases retain a unit
restriction on the quotient row until its squared
norm is bounded. In the bulk case that quotient is
unrestricted at $p$; no principal zero-extended row
character is inserted there.

Let $D_t=\N\rad t$, let $E$ be the product of norms
of the nonprincipal and boundary primes, and let
$E_{\rm bd}$ be the boundary product.
The row divisor has norm $T/E$.
After row division, the remaining parameters are
\begin{equation}\label{eq:component-scales}
 q_t=qE/T,\quad L_t=L/T,\quad X_t=X/(TE),\qquad
 \text{squared scalar}\le (TE E_{\rm bd})^{-1}.
\end{equation}
Radiality of $w$ justifies this scale change.
Extra quotient-row restrictions are removed only
after taking a nonnegative squared norm.
If $q_t<1$, the component is empty.
The support of the profile gives $T\le L$, so $L_t\ge1$.

\begin{lemma}[Uniform upper projection]\label{lem:projection-upper}
For nonnegative $U$ and $q,L\ge1$,
\begin{equation}\label{eq:sf-upper}
 \begin{split}
 \|F^{\rm sf}_{\mathfrak d,\mathbf e;U}(\,\cdot\,;L)\|_q
 \le{}&C_0\sqrt{C_{S,w}}\II(U)
       q^{5/12}L^{-1/3}\Omega(\mathfrak d,\mathbf e)^{1/2}\\
 &+O_\eps\left((2qLD)^\eps
       \{\sqrt{q/L}(1+X^{1/24})+\sqrt D\}\right).
 \end{split}
\end{equation}
Here
$C_0=\prod_{p\notin S}(1+f(\N p))$ with
$0\le f(P)<2P^{-3/2}$.
In particular $C_0-1<1/4$ can be arranged by a fixed
enlargement of $S$.
\end{lemma}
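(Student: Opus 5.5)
The plan is to write $F^{\rm sf}$ as a finite-coefficient combination of components indexed by powerful $t$, using the exact decomposition just described, and then apply the triangle inequality in $\|\cdot\|_q$. Expanding $\mu^2(s)=\sum_{d^2\mid s}\mu(d)$ and grouping by the full $d$-supported part $t$ gives
\[
 F^{\rm sf}_{\mathfrak d,\mathbf e;U}(m;L)
 =\sum_{t\ {\rm powerful}}\mu(\rad t)\sum_{\rm branches}\beta_t(m)\,
      F_{\mathfrak d',\mathbf e';U_t}(m/\text{row divisor};L_t),
\]
with $(s_0,\rad t)=1$ absorbed as an additional exclusion $\mathfrak d'=\mathfrak d\rad t$. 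At each $p^a\parallel t$ we pick one of the three cases of \eqref{eq:component-table}. The scalars $\beta_t$ have squared modulus bounded by \eqref{eq:component-scales}. The twist exponent at $p$ becomes the remaining exponent listed in the table. Because $T\le L$ and the profile is $U(T\,\cdot)$ after rescaling, the new profile stays in a bounded smooth family (with $L_t=L/T\ge1$). Each component therefore has the shape \eqref{eq:gauss-polynomial} at row scale $q_t$ and column scale $L_t$, once we note that $\chi_{s_0}(t)^2\mathfrak r(t,s_0)$ is a residue-symbol twist in $d_{\mathbf e'}$ times a ray-constant phase.

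Next I bound each component with Proposition \ref{prop:gram}. I use $\sqrt{a+b}\le\sqrt a+\sqrt b$. In the range $X_t\le1$ I use the $O(q_t/L_t)$ bound instead. For the bulk (principal) primes the quotient row is unrestricted, so Proposition \ref{prop:gram} applies directly. For boundary and nonprincipal primes the unit restriction on the quotient row is dropped only after squaring, since it only removes nonnegative terms.

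The main term of one component is
\[
 \sqrt{C_{S,w}}\,\II(U_t)\,q_t^{5/12}L_t^{-1/3}\,\Omega(\mathfrak d',\mathbf e')^{1/2}\,|\beta_t|.
\]
I then compute its ratio to the $t=1$ term. Since $\II(U_t)$ rescales by $T^{1/6}$, each prime contributes a local factor. In the bulk case $6\mid a$ this factor is $P^{-a/2}(1-P^{-1})\cdot P^{5a/12\cdot0}\cdots$, and the net effect in every case is $O(P^{-a/2-c})$ with $c\ge1/4$. The worst cases are $a=2$ and $a=3$, nonprincipal. Here $q_t^{5/12}L_t^{-1/3}$ gains $(E/T)^{5/12}T^{1/3}$ and the scalar gives $(TE)^{-1/2}$. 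The new $\Omega$ factor $q_p^{-j_p/12}(1+P^{-1})^{-1}$ only helps.

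Summing over $t$ gives an Euler product $\prod_p(1+f(P))$. I will check prime by prime that $f(P)<2P^{-3/2}$, which is dominated by $a=2$ and sums with two branches. This yields $C_0$. Enlarging $S$ removes small primes, so $C_0-1<1/4$.

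For the error terms, $\sqrt{q_t/L_t}|\beta_t|\le\sqrt{q/L}\,(E/T\cdot T)^{1/2}(TEE_{\rm bd})^{-1/2}\ll\sqrt{q/L}\,T^{-1/2}$. Also $X_t^{1/24}\le X^{1/24}$. The $\sqrt{D'}$ term carries $\sqrt{D_t}\,|\beta_t|\le D_t^{1/2}T^{-1/2}\cdots$. For powerful $t$ one has $D_t\le T^{1/2}$, so this term is $\ll\sqrt D\,T^{-1/4}$ times the remaining scalar decay. Since $\sum_{t\ {\rm powerful}}T^{-1/2-\delta}$ might diverge only logarithmically, I will use the extra factor $(TE)^{-1/2}$ to make both error sums converge, at a cost of $(qLD)^\eps$ from divisor counts and branch choices ($3^{\omega(t)}$).

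I expect the main obstacle to be the careful local bookkeeping showing $f(P)<2P^{-3/2}$ with the exact exponents. In particular I must verify that the twisted components still fit Proposition \ref{prop:gram}'s uniformity when $\mathfrak d$ is enlarged by $\rad t$.
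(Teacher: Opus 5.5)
Your overall route is the paper's: you expand into the powerful-modulus components of \eqref{eq:component-table} and apply Proposition~\ref{prop:gram} to each with moving support $\mathfrak d\rad t$. Components with $X_t<1$ take the short-column bound, and those with $q_t<1$ are empty. You then conclude with Minkowski, an Euler product for the main terms, and divisor sums for the errors. However, your main-term bookkeeping is wrong in two places.

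First, the profile does not change. Since $q_s=Tq_{s_0}$ and $L^{-1}=T^{-1}L_t^{-1}$, each component, after row division and up to its scalar, is $F_{\mathfrak d\rad t,\mathbf e';U}(\,\cdot\,;L_t)$ with the same $U$ and the same $\II(U)$. The $T^{-1}$ is already in the squared scalar. An extra $T^{1/6}$ from $\II$ on top of the passage to $L_t$ double counts; it would turn the $a=2$ factor $P^{-3/2}$ into $P^{-7/6}$. Moreover, $U(T\,\cdot)$ at scale $L$ is not a bounded profile family, so the uniformity of Proposition~\ref{prop:gram} would be lost.

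Second, your bound $c\ge1/4$ cannot give the stated $f$. Without the new $\Omega$ factor, the nonprincipal $a=2$ component has relative size $E^{-1/12}T^{-7/12}=P^{-5/4}$, and $P^{-5/4}<2P^{-3/2}$ fails once $\N p\ge16$. The factor $P^{-j_p/12}(1+P^{-1})^{-1}$ from $\Omega(\mathfrak d\rad t,\mathbf e')^{1/2}$, with $j_p\equiv-(a+1)\pmod 6$ (so $j_p=3$ at $a=2$), is therefore essential, not merely helpful. With it you get exactly \eqref{eq:local-main-factors}, each factor at most $P^{-a-1}$ with constant one. Hence $f(P)\le P^{-3/2}/(1-P^{-1/2})+P^{-7/2}/(1-P^{-3})<2P^{-3/2}$, where the second term is the extra branch at $6\mid a$; at $a=2$ there is only one branch, not two. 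Because $C_0$ is explicit, you need these inequalities with constant one, not $O(P^{-a/2-c})$.

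The error sums also do not converge as you claim. The scalar $(TEE_{\rm bd})^{-1/2}$ is the only decay available, and once it is used there is no further $(TE)^{-1/2}$. The sum $\sum_tT^{-1/2}E_{\rm bd}^{-1/2}$ over powerful $t$ diverges logarithmically. The sum $\sum_t(D_t/(TEE_{\rm bd}))^{1/2}$ has local factor $P^{-1}$ at $a=2$, so its Euler product diverges. The missing ingredient is the truncation $T\le L$ forced by the column support. Bound each sum by $L^\eta\sum_tT^{-\eta}(\cdots)$; this makes the leading local factor $P^{-1-2\eta}$ and the product convergent. These sums are thus $\ll_\eta L^\eta$, not $O(1)$, and the $(2qLD)^\eps$ allowance absorbs them.

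With that truncation, your cruder $X_t^{1/24}\le X^{1/24}$ is harmless; the paper instead keeps $(TE)^{-1/24}$ and gets a convergent middle sum with leading factor $P^{-9/8}$. The uniformity you flag is immediate: $q_tL_tDD_t=qLD\,ED_t/T^2\le qLD$, since $E\le D_t\le T^{1/2}$.
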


\begin{proof}
For a component with $X_t\ge1$, apply
Proposition \ref{prop:gram} with moving support $D D_t$.
The external factor $\Omega(\mathfrak d,\mathbf e)$
is unchanged.
Multiplying its main term by the exact scalar in
\eqref{eq:component-table} gives the following local
factors relative to the original main scale:
\begin{equation}\label{eq:local-main-factors}
 \begin{array}{c|c}
 a=6j+r,\ 1\le r\le5&(1+P^{-1})^{-2}P^{-7j-r-1}\\
 a=6j,\ \text{boundary}&(1+P^{-1})^{-2}P^{-7j-2}\\
 a=6j,\ \text{bulk}&(1+P^{-1})^{-2}(1-P^{-1})^2P^{-7j}.
 \end{array}
\end{equation}
For $a\ge2$ each is at most $P^{-a-1}$.
Their square roots sum to at most
\[
 f(P)\le\frac{P^{-3/2}}{1-P^{-1/2}}
              +\frac{P^{-7/2}}{1-P^{-3}}<2P^{-3/2}.
\]
The unit component contributes one.
The Euler product therefore bounds the entire main
contribution in the norm. Components with $X_t<1$
are bounded by the short-column estimate and may
also be included in this positive main upper sum.

The squared component error is at most
\begin{equation}\label{eq:component-error}
 (2qLD)^\eps E_{\rm bd}^{-1}
 \left[
 \frac qL\{T^{-1}+X^{1/12}T^{-13/12}E^{-1/12}\}
                         +\frac{D D_t}{TE}
 \right].
\end{equation}
Uniformity follows from
$q_tL_tD D_t=qLD\,D_tE/T^2\le qLD$.
The first term covers the short-column components.
After square roots the required sums are
\begin{equation}\label{eq:three-sums}
 \begin{split}
 \sum_{t,\mathrm{components}}T^{-1/2}E_{\rm bd}^{-1/2}
        &\ll_\eta L^\eta,\\
 \sum_{t,\mathrm{components}}
              T^{-13/24}E^{-1/24}E_{\rm bd}^{-1/2}
        &\ll1,\\
 \sum_{t,\mathrm{components}}
              (D_t/(TE E_{\rm bd}))^{1/2}
        &\ll_\eta L^\eta .
 \end{split}
\end{equation}
For the first sum, a powerful ideal is $u^2v^3$ with
$v$ squarefree, and divisor multiplicities cost a
small power. The second has leading prime factor
$P^{-9/8}$ and converges.
For the third the local factors are
$P^{-a/2}$, $P^{-(a+1)/2}$, and $P^{-(a-1)/2}$
in the three cases of \eqref{eq:component-table}.
Multiplication by $P^{-a\eta}$ makes the leading
$a=2$ term $P^{-1-2\eta}$ and gives a convergent
Euler product. This proves its truncated bound.
Minkowski and $\sqrt{x+y}\le\sqrt x+\sqrt y$ now
give \eqref{eq:sf-upper}.
\end{proof}

\begin{corollary}[A squarefree-column starting vector]
\label{cor:column-anchor}
Fix $0<\delta<1$. After a fixed enlargement of $S$,
uniformly for $B\le A\le B^{2-\delta}$ and sufficiently
large $B$,
\[
 \|F^{\rm sf}_{1,0;W}(\,\cdot\,;B)\|_A^2
                                 \gg A^{5/6}B^{-2/3}.
\]
The constant is positive for every fixed nonnegative,
nonzero $W\in C_c^\infty((1/2,1))$.
\end{corollary}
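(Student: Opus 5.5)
The plan is to run the decomposition behind Lemma \ref{lem:projection-upper} in reverse. I isolate the unit component, which is the full Gauss polynomial, and treat every other component as a perturbation. Expanding $\mu^2(s)=\sum_{d^2\mid s}\mu(d)$ and grouping by the powerful part $t$ of $s$ writes
\[
 F^{\rm sf}_{1,0;W}(m;B)=F_{1,0;W}(m;B)+\sum_{t\ne1}\mu(\rad t)\,(\text{component of }t),
\]
with the components described in \eqref{eq:component-table}--\eqref{eq:component-scales}. Here $\mathfrak d=1$ and $\mathbf e=0$, so $D=1$ and $\Omega=1$. By Minkowski in the weighted norm $\|\cdot\|_A$,
\[
 \|F^{\rm sf}_{1,0;W}\|_A\ \ge\ \|F_{1,0;W}\|_A-\Big\|\sum_{t\ne1}\cdots\Big\|_A .
\]

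For the unit term I apply Proposition \ref{prop:gram} with $q=A$ and $L=B$. The hypothesis $A\le B^{2-\delta}$ gives $X=B^2/A\ge B^{\delta}\ge1$, and the proposition yields
\[
 \|F_{1,0;W}\|_A^2=C_{S,w}\II(W)^2A^{5/6}B^{-2/3}+O_\eps\big((AB)^\eps\{(A/B)(1+X^{1/12})+1\}\big).
\]
Each error term is a power smaller than the main term. First, $(A/B)/(A^{5/6}B^{-2/3})=A^{1/6}B^{-1/3}\le B^{-\delta/6}$. Second, $(A/B)X^{1/12}/(A^{5/6}B^{-2/3})=A^{1/12}B^{-1/6}\le B^{-\delta/12}$. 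Third, $A^{5/6}B^{-2/3}\ge B^{1/6}$ because $A\ge B$. Taking $\eps$ small in terms of $\delta$ gives $\|F_{1,0;W}\|_A\ge(1-o(1))\sqrt{C_{S,w}}\,\II(W)A^{5/12}B^{-1/3}$, where $\II(W)>0$ since $W\ge0$ is nonzero.

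For the nonunit components I repeat the proof of Lemma \ref{lem:projection-upper} verbatim, omitting the unit component. The main contributions are governed by the local factors \eqref{eq:local-main-factors}. Their square roots now sum to $C_0-1=\prod_p(1+f(\N p))-1$ times $\sqrt{C_{S,w}}\,\II(W)A^{5/12}B^{-1/3}$. The errors are bounded, via \eqref{eq:component-error} and \eqref{eq:three-sums}, by the same expression $(AB)^\eps\{\sqrt{A/B}(1+X^{1/24})+1\}$, which the computation above shows is $o(A^{5/12}B^{-1/3})$.

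I then enlarge $S$ once, independently of $A$ and $B$, so that $C_0-1<1/4$. The lemma asserts this is possible because $f(P)<2P^{-3/2}$. The enlargement changes $C_{S,w}$, but only to another fixed positive constant, and by \eqref{eq:submatrix} it is harmless for the eventual application. Combining the two estimates gives
\[
 \|F^{\rm sf}_{1,0;W}\|_A\ge\big(\tfrac34-o(1)\big)\sqrt{C_{S,w}}\,\II(W)A^{5/12}B^{-1/3},
\]
and squaring yields the corollary.

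The main obstacle is making sure the comparison is genuinely non-circular. The constant $C_{S,w}$ for the enlarged $S$ must stay bounded below while $C_0-1$ shrinks, which holds because the projected coefficient $C_{S,w}^{\,0}\prod_{p\in S}(1-P^{-5/6})$ is positive for each fixed $S$. In addition, the components with $X_t<1$, which the short-column bound handles, must be counted in the $o(1)$ error or in the $(C_0-1)$ main upper sum, exactly as in the lemma. I expect both to go through, since the lemma's bounds are uniform and positive-term.
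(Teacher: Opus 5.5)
Your proposal is correct and is essentially the paper's proof. With $\mathcal M=A^{5/6}B^{-2/3}$:
\begin{itemize}
\item The $t=1$ component is the all-modulus polynomial, handled by Proposition \ref{prop:gram}.
\item The nonunit powerful components contribute at most $(C_0-1)\sqrt{C_{S,w}}\,\II(W)\sqrt{\mathcal M}$, plus the error of Lemma \ref{lem:projection-upper}.
\item That error is $o(\sqrt{\mathcal M})$ because $X\ge B^{\delta}$, $\mathcal M\ge B^{1/6}$ and $AB\le B^3$.
\item The triangle inequality, after enlarging $S$ so that $C_0-1<1/4$, then finishes the argument.
\end{itemize}

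Your worry about $C_{S,w}$ staying bounded below is unnecessary. $S$ is enlarged only once, and the main term and the perturbation carry the same factor $\sqrt{C_{S,w}}\,\II(W)$, so only the positivity of $C_{S,w}$ is used.
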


\begin{proof}
Put $Y=B^2/A$ and $\mathcal M=A^{5/6}B^{-2/3}$.
Proposition \ref{prop:gram} gives the all-modulus norm
\[
             (\sqrt{C_{S,w}}\II(W)+o(1))\sqrt{\mathcal M}.
\]
The nonunit powerful components change this norm by
at most
\[
 (C_0-1)\sqrt{C_{S,w}}\II(W)\sqrt{\mathcal M}
 +O_\eps\left((AB)^\eps
                     \{\sqrt{A/B}(1+Y^{1/24})+1\}\right).
\]
The error divided by $\sqrt{\mathcal M}$ is
\[
 O_\eps\left((AB)^\eps
             \{Y^{-1/12}+Y^{-1/24}+\mathcal M^{-1/2}\}\right)=o(1),
\]
since $Y\ge B^\delta$, $\mathcal M\ge B^{1/6}$,
and $AB\le B^3$.
Choose the source power losses below these margins,
and choose the fixed prime cutoff with $C_0-1<1/4$.
The triangle inequality gives the asserted positive lower bound.
\end{proof}

\section{The optimal full-cutoff lower bound}
\label{sec:optimal-proof}

The previous section provides a squarefree-column
Gauss vector with large energy on unrestricted rows.
The remaining issue is to retain squarefree rows
without losing the coefficient structure needed for
a change of column scale. We first prove the centered
comparison, then isolate a small-twist starting vector,
and finally transport its energy.

\subsection{Canceling the profile main term}

The Gram main term has squared size proportional to
$L^{-2/3}$. Multiplying the test sum by $L^{1/3}$
removes this column-scale dependence. We compare these
normalized sums by choosing profiles with the same
value of $\II$, so that their common main term cancels.

Choose a fixed nonnegative, nonzero
$W\in C_c^\infty((3/4,7/8))$.
For $1\le\rho\le4/3$ put
\[
               W_\rho(t)=\rho^{1/6}W(\rho t),\qquad
               V_\rho(t)=W(t)-W_\rho(t).
\]
All three nonnegative profiles $W,W_\rho,W+W_\rho$
are supported in $(1/2,1)$ with uniform seminorms.
Also $\II(W_\rho)=\II(W)$.
In the next identity, suppress the fixed arithmetic
subscripts and the common column scale in $F$.
Linearity and the parallelogram identity give
\[
 \|F_{V_\rho}\|_q^2
  =2\|F_W\|_q^2+2\|F_{W_\rho}\|_q^2
                                -\|F_{W+W_\rho}\|_q^2.
\]
The main terms in \eqref{eq:gram} cancel exactly.
This derives the signed-profile bound from nonnegative
profiles.

\begin{lemma}[Centered scale comparison]\label{lem:scale}
Fix the moving data $\mathfrak d,\mathbf e$ and row scale $q$.
If $1\le L_-\le L_+$ and $q,L_+,D$ are in fixed-power
ranges of $A\ge2$, then for every $\zeta>0$,
\begin{equation}\label{eq:scale-comparison}
 \begin{split}
 &\big\|L_+^{1/3}F^{\rm sf}_{\mathfrak d,\mathbf e;W}
                              (\,\cdot\,;L_+)
      -L_-^{1/3}F^{\rm sf}_{\mathfrak d,\mathbf e;W}
                              (\,\cdot\,;L_-)\big\|_q^2\\
 &\hspace{15mm}\ll_\zeta A^\zeta
       \{qL_-^{-1/3}+q^{11/12}L_-^{-1/6}+D L_+^{2/3}\}.
 \end{split}
\end{equation}
The same bound holds on any fixed subset of the row set.
\end{lemma}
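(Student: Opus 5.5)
We prove \eqref{eq:scale-comparison} by a chain of adjacent-scale steps. Each step is an exact rescaling of the column profile, followed by the parallelogram identity of this subsection applied to every squarefree component.

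\emph{Rescaling.} Put $\rho=L'/L\in[1,4/3]$. Substituting $q_s/L'=(q_s/L)/\rho$ in \eqref{eq:gauss-polynomial} gives the exact identity
\[
 L'^{1/3}F^{\rm sf}_{\mathfrak d,\mathbf e;W}(m;L')
 =L^{1/3}F^{\rm sf}_{\mathfrak d,\mathbf e;\widetilde W_\rho}(m;L),
 \qquad \widetilde W_\rho(t)=\rho^{-1/6}W(t/\rho).
\]
Indeed $(\rho L)^{1/3-1}\rho^{1/2}=L^{-2/3}\rho^{-1/6}$. The new profile satisfies $\II(\widetilde W_\rho)=\II(W)$, and its support lies in $(3/4,7\rho/8)\subset(1/2,1)$ because $W$ is supported in $(3/4,7/8)$ and $\rho\le4/3$. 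Its seminorms are uniform in $\rho$. Hence the difference of two adjacent normalized sums equals $L^{1/3}F^{\rm sf}_{V}(\cdot;L)$ with the signed profile $V=W-\widetilde W_\rho$. This is the situation of the parallelogram identity, with $\widetilde W_\rho$ in place of $W_\rho$; only the properties just listed are used.

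\emph{One step.} Expand $F^{\rm sf}_V$ by the powerful-part decomposition of Section~\ref{sec:columns}. Each component is a scalar times $F_{\mathfrak d\rad t,\cdot\,;V}$ at the scales \eqref{eq:component-scales}. For a component with $X_t\ge1$, write $\|F_V\|^2$ as $2\|F_W\|^2+2\|F_{\widetilde W_\rho}\|^2-\|F_{W+\widetilde W_\rho}\|^2$. Apply Proposition~\ref{prop:gram} to the three nonnegative profiles. The main term depends on the profile only through $\II(\cdot)^2$ times factors independent of the profile, and $\II(W+\widetilde W_\rho)=2\II(W)$. Therefore the main terms cancel exactly, since $2+2-4=0$, and only the error term of \eqref{eq:gram} remains. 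For a component with $X_t<1$, the short-column bound $O(q_t/L_t)$, which is stated also for real signed profiles, applies directly.

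The component errors are the same as in \eqref{eq:component-error}, now without any main part. After taking square roots, summing with Minkowski's inequality using \eqref{eq:three-sums}, and squaring, we obtain
\[
 \|F^{\rm sf}_V(\cdot;L)\|_q^2\ll_\zeta A^{\zeta}\Big\{\frac qL(1+X^{1/12})+D\Big\},\qquad X=L^2/q.
\]
All parameters lie in fixed-power ranges of $A$, so $(2qLD)^\eps\ll A^\zeta$. Multiplying by $L^{2/3}$ gives
\[
 L^{2/3}\frac qL=qL^{-1/3},\qquad L^{2/3}\frac qL X^{1/12}=q^{11/12}L^{-1/6},\qquad L^{2/3}D.
\]

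\emph{Chaining.} Let $k=\lceil\log(L_+/L_-)/\log(4/3)\rceil\ll\log A$. Put $\rho=(L_+/L_-)^{1/k}\le4/3$ and $L_j=\rho^jL_-$. Each step, at scale $L_j\in[L_-,L_+]$, has squared norm $\ll A^\zeta\{qL_-^{-1/3}+q^{11/12}L_-^{-1/6}+DL_+^{2/3}\}$. The triangle inequality over the $k$ steps costs a factor $k^2\ll(\log A)^2$, which is absorbed into $A^\zeta$. This proves \eqref{eq:scale-comparison}.

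\emph{Row subsets.} The norm $\|\cdot\|_q$ has nonnegative weights, so restricting the rows to a fixed subset only decreases it. The same bound therefore holds on any fixed subset of the row set.

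\emph{Main difficulty.} The delicate point is the component-wise cancellation. We must check that every powerful component's main term depends on the profile only through $\II^2$, with the same scalar for $W$, $\widetilde W_\rho$ and $W+\widetilde W_\rho$. This holds because the component decomposition is linear in the profile and the scalars in \eqref{eq:component-table} are profile-independent. We must also check that the short-column components, which have no main-term formula, are still controlled by the signed $O(q_t/L_t)$ bound. Summing the component errors is then exactly as in Lemma~\ref{lem:projection-upper}.
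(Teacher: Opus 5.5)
Your route is the paper's: exact renormalization of adjacent column scales, the parallelogram identity on each powerful component so that the $\II(\cdot)^2$ main terms of Proposition~\ref{prop:gram} cancel, the signed short-column bound for $X_t<1$, the component errors \eqref{eq:component-error} summed with \eqref{eq:three-sums}, and $O(\log A)$ telescoping steps. However, one step is false as written.

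You dilate the profile upward, to $\widetilde W_\rho(t)=\rho^{-1/6}W(t/\rho)$. Its support is $(3\rho/4,7\rho/8)$, and this is not contained in $(1/2,1)$ once $\rho>8/7$. At your maximal step ratio $\rho=4/3$ the upper endpoint is $7/6$. So for $8/7<\rho\le4/3$ the profiles $\widetilde W_\rho$ and $W+\widetilde W_\rho$ violate the support hypothesis of Proposition~\ref{prop:gram}. The powerful decomposition also loses the inference $T\le L$, hence $L_t\ge1$, which you need before applying that proposition to each component.

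The repair is immediate. One option is to cap the step ratio at $8/7$; the number of steps remains $O(\log A)$. The other is the paper's choice: write each difference at the larger scale,
\[
 L^{1/3}F^{\rm sf}_W(\,\cdot\,;L)-(L/\rho)^{1/3}F^{\rm sf}_W(\,\cdot\,;L/\rho)=L^{1/3}F^{\rm sf}_{W-W_\rho}(\,\cdot\,;L),
\]
with the contracted profile $W_\rho(t)=\rho^{1/6}W(\rho t)$. This profile is supported in $(9/16,7/8)$ for $1\le\rho\le4/3$, which is why $W$ was placed in $(3/4,7/8)$. The step scale still lies in $[L_-,L_+]$, so the monotonicity of the three error terms gives the same final bound. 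With either fix the rest of your argument goes through unchanged.
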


\begin{proof}
First impose squarefree columns on the centered profile.
Every powerful component has the same profile $V_\rho$,
so its full Gram main term is zero by the preceding
parallelogram identity.
Use \eqref{eq:component-error} and all three sums in
\eqref{eq:three-sums}. Empty row components are omitted;
the short-column bound covers $X_t<1$.
This gives
\[
 \|F^{\rm sf}_{\mathfrak d,\mathbf e;V_\rho}
                                  (\,\cdot\,;L)\|_q^2
 \ll_\eps(2qLD)^\eps\{(q/L)(1+X^{1/12})+D\}.
\]
Uniformity in $\rho$ follows from the common seminorm bounds.

The exact normalization in \eqref{eq:gauss-polynomial} gives
\[
 L^{1/3}F^{\rm sf}_W(m;L)
 -(L/\rho)^{1/3}F^{\rm sf}_W(m;L/\rho)
                         =L^{1/3}F^{\rm sf}_{V_\rho}(m;L).
\]
After multiplication by $L^{2/3}$, the three error terms
are $qL^{-1/3}$, $q^{11/12}L^{-1/6}$, and $D L^{2/3}$.
Join the endpoints by $O(\log A)$ adjacent steps of
ratio at most $4/3$. The first two terms decrease with
$L$ and the third increases.
Minkowski, followed by a rechoice of the power loss,
proves \eqref{eq:scale-comparison}.
Restricting the outer squared norm of the difference
preserves this upper bound.
\end{proof}

\subsection{Two tail estimates using only known upper bounds}

The next lemma uses \eqref{eq:theta6} and \eqref{eq:xi3}.
It is shared by the present structured-vector argument
and the original operator-norm transfer in
Appendix \ref{app:unbalanced}.

For squarefree-column coefficients $\lambda$, write
$P_\lambda(u)=\sum_b\lambda_bJ_6(u,b)$.
Fix $0<\delta<1/3$ and
$B^{4/3+\delta}\le A\le B^{2-\delta}$.
Put $Z=AB$, $M=A^{5/6}B^{1/3}$, and $g=\delta/18$.

\begin{lemma}\label{lem:tails}
For every $\rho>0$, the sixth-power decomposition
$m=t^6u$ has tail
\begin{equation}\label{eq:t6-tail}
 \sum_{\substack{\N t>T_6\\\N(t^6u)\le A\\u\ {\rm sixth\!-\!power\!-\!free}}}
       |P_{\lambda^{(t)}}(u)|^2
 \ll Z^\rho M(T_6^{-4}+Z^{-g})\|\lambda\|_2^2,
\end{equation}
where $\lambda_b^{(t)}=\lambda_b1_{(b,t)=1}$.
For
$u=a_1a_2^2a_3^3a_4^4a_5^5$ with pairwise coprime
squarefree factors, the higher-factor tail satisfies
\begin{equation}\label{eq:higher-tail}
 \sum_{\substack{\N u\le A\\\prod_{j=2}^5\N a_j>T}}
                    |P_\lambda(u)|^2
       \ll Z^\rho M(T^{-2/3}+Z^{-g})\|\lambda\|_2^2.
\end{equation}
\end{lemma}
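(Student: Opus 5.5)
The plan is to decompose each row into a fixed factor times a free variable. The character in the fixed factor is absorbed into the column coefficients. The remaining sum over the free variable is then bounded by the arbitrary-coefficient upper bounds \eqref{eq:theta6} and \eqref{eq:xi3}, and the resulting terms are summed over dyadic ranges of the fixed factor. All comparisons are with $M=A^{5/6}B^{1/3}$ under $B^{4/3+\delta}\le A\le B^{2-\delta}$. In this range
\[
A/M=A^{1/6}B^{-1/3}\le B^{-\delta/6},\qquad A^{1/3}B^{5/6}/M=(B/A)^{1/2}\le B^{-1/6},\qquad B/M\le B^{-4/9}.
\]
Each of these is at most $Z^{-g}$ with $g=\delta/18$, since $Z=AB\le B^3$.

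For \eqref{eq:t6-tail}, I use $J_6(t^6u,b)=J_6(u,b)1_{(b,t)=1}$, which gives $P_\lambda(t^6u)=P_{\lambda^{(t)}}(u)$ with $\|\lambda^{(t)}\|_2\le\|\lambda\|_2$. I split $\N t\asymp T'$ dyadically, with $T'>T_6$. There are $O(T')$ such $t$, and for each one $u$ runs over sixth-power-free rows with $\N u\le A/T'^6$. Applying \eqref{eq:theta6} for each $t$ and summing over $t$ gives
\[
Z^\rho\{AT'^{-5}+BT'+MT'^{-4}+A^{1/3}B^{5/6}T'^{-1}\}\|\lambda\|_2^2 .
\]
The $MT'^{-4}$ term produces $MT_6^{-4}$. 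The terms $AT'^{-5}$ and $A^{1/3}B^{5/6}T'^{-1}$ are at most $MZ^{-g}$ by the ratios above. For the $BT'$ term, $t$ only ranges up to $T'\le A^{1/6}$, and $BA^{1/6}/M=(B/A)^{2/3}\le B^{-2/9}$. The dyadic count costs $\log Z$, which is absorbed into $Z^\rho$.

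For \eqref{eq:higher-tail}, I fix the higher factors $(a_2,\dots,a_5)$. The factor $\prod_{j\ge2}J_6(a_j,b)^j$ is a column weight of modulus at most one (together with coprimality), so it is absorbed into $\lambda$. The sum over squarefree $a_1$ with $\N a_1\le A/Y$ is then bounded by \eqref{eq:theta6}, where $Y=\N(a_2^2a_3^3a_4^4a_5^5)$. The number of tuples with $Y\asymp Y'$ is $\ll Y'^{1/2+\rho}$, and $\prod_{j\ge2}\N a_j>T$ forces $Y\ge T^2$. So for dyadic $Y'$ the contribution is
\[
Y'^{1/2}\{A/Y'+B+(A/Y')^{5/6}B^{1/3}+(A/Y')^{1/3}B^{5/6}\}.
\]
The $M$-term gives $MY'^{-1/3}\le MT^{-2/3}$, which is the claimed main tail. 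The first and last terms are at most $AY'^{-1/2}$ and $A^{1/3}B^{5/6}Y'^{1/6}\le A^{1/2}B^{5/6}$, and both are $\le MZ^{-g}$. The term $BY'^{1/2}$ is harmless as long as $Y'\le(M/B)^2Z^{-2g}$.

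I expect the main obstacle to be the large-$Y'$ range, where $BY'^{1/2}$ can exceed $M$ because $BA^{1/2}/M=B^{2/3}A^{-1/3}$ may be large. There I plan to change which variable is summed. At least one higher factor is then large. If $a_2$ or $a_4$ dominates, I keep the other factors fixed and sum over that variable, using $J_6(a,b)^2=(a/b)_3$ and $J_6(a,b)^4=\overline{(a/b)_3}$. The bound \eqref{eq:xi3} at row length $(A/Y_{\rm rest})^{1/2}$ or $(A/Y_{\rm rest})^{1/4}$ then replaces the linear $B$-term, and the $(XB)^{2/3}$ term is small because the row length is a root of $A$. If $a_5$ dominates, $J_6^5=\overline{J_6}$, so I apply \eqref{eq:theta6} in $a_5$ with row length $(A/Y_{\rm rest})^{1/5}$. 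If $a_3$ dominates, $J_6^3$ is a quadratic symbol. For this case I would first try writing $J_6(a_3,b)^3=J_6(a_3^3,b)$ and treating $a_3^3$ as a sixth-power-free row in \eqref{eq:theta6}, at the cost of sparsity; failing that, I would use a trivial Cauchy--Schwarz bound in this range. I still need to check that in every dominant-variable case the bound is $\ll Z^\rho M(T^{-2/3}+Z^{-g})$, using $A\ge B^{4/3+\delta}$.
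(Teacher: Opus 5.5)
Your proof of \eqref{eq:t6-tail} is the paper's argument: apply \eqref{eq:theta6} at row length $A/(\N t)^6$, sum over $\N t>T_6$, and bound the $B$-term by $BA^{1/6}$. It is correct.

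Your proof of \eqref{eq:higher-tail} has a genuine gap, located in the large-$Y'$ range that you leave unchecked. The trouble comes from the bookkeeping. You index the higher factors only by $Y=\N(a_2^2a_3^3a_4^4a_5^5)$ and count tuples by the worst case $Y'^{1/2}$. This makes the $B$-term of the $a_1$-average equal to $BY'^{1/2}$, which forces you to abandon the $a_1$-average once $Y'>(M/B)^2$. Your replacements do not recover what is lost.

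For example, take $A=B^{3/2}$, so $M=B^{19/12}$ and $(M/B)^2=B^{7/6}$. Take rows $u=a_1a_3^3$ with $\N a_3\asymp B^{5/12}$, so $Y'\asymp B^{5/4}$ and $a_3$ dominates. Trivial Cauchy--Schwarz gives $\#\{\text{rows}\}\cdot B\asymp B^{2/3}\cdot B=B^{20/12}$. Treating $a_3^3$ as a sparse sixth-power-free row in \eqref{eq:theta6} produces the term $B^{1/4}(B^{5/4})^{5/6}B^{1/3}=B^{39/24}$. Both exceed $M$.

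Near $A=B^{4/3}$, mixed configurations fail for every choice of dominant variable. With $\N a_2\asymp B^{1/4}$ and $\N a_3\asymp B^{1/6}$, the cubic sieve in $a_2$ gives about $B^{3/2}$, against $M\approx B^{13/9}$. Both $a_3$ options are worse.

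Separately, your assertion $A^{1/2}B^{5/6}\le MZ^{-g}$ is false for $\delta<1/6$, since the ratio is $A^{-1/3}B^{1/2}$. That term is harmless only because it is confined to $Y'\le(M/B)^2$.

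The missing idea is the paper's finer bookkeeping. Fix dyads $\N a_j\asymp F_j$ for $j\ge2$, and put $F=\prod_{j\ge2}F_j^j$ and $S_1=\prod_{j\ge2}F_j$. The number of tuples is $\asymp S_1$, not $F^{1/2}$, and $F\ge S_1^2$. Averaging over $a_1$ then gives
\[
 U_1\ll Z^\rho\{A/S_1+BS_1+MS_1^{-2/3}+A^{1/3}B^{5/6}S_1^{1/3}\}.
\]
This is acceptable for $T<S_1\le A^{1/3}$, since then $BS_1\le A^{1/3}B$ and $A^{1/3}B^{5/6}S_1^{1/3}\le A^{4/9}B^{5/6}$. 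Both of the examples above have $S_1\le A^{1/3}$, so this step handles them.

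For $S_1>A^{1/3}$ the paper always averages over $a_2$, whichever factor is large, using $J_6(a_2,b)^2=(a_2/b)_3$ and \eqref{eq:xi3}. This gives $U_2\ll(A/F)(S_1/F_2)\,\Xi_3(2F_2,B)$. The inequalities $FF_2\ge S_1^3$ and $FF_2^{1/3}\ge S_1^{7/3}$ then yield $U_2\ll Z^\rho\{A+A^{1/3}B+A^{5/9}B^{2/3}\}$.

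When $F_2$ is small, this is just the trivial bound. It still suffices because the exponents $\ge3$ on the remaining factors make those rows sparse enough. This uniform split needs no dominant-variable analysis, no quadratic sieve for $a_3$, and no separate cases for $a_4$ or $a_5$.
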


\begin{proof}
The identity $J_6(t^6u,b)=J_6(u,b)1_{(b,t)=1}$
does not require $(t,u)=1$.
Apply \eqref{eq:theta6} at row length $A/(\N t)^6$
and sum $\N t>T_6$. The four terms give
\[
 Z^\rho\{AT_6^{-5}+BA^{1/6}+MT_6^{-4}
                    +A^{1/3}B^{5/6}T_6^{-1}\}\|\lambda\|_2^2.
\]
In the indicated range this is bounded as in
\eqref{eq:t6-tail}.

For the second assertion normalize $\|\lambda\|_2=1$.
Fix dyads $\N a_j\asymp F_j$, $j\ge2$, and set
$F=\prod_{j=2}^5F_j^j$, $S_1=\prod_{j=2}^5F_j$.
Fixing the higher factors and averaging $a_1$ gives
$U_1\ll S_1\Xi_6(A/F,B)$.
Fixing $a_1,a_3,a_4,a_5$ and averaging $a_2$ gives
$U_2\ll (A/F)(S_1/F_2)\Xi_3(2F_2,B)$.
The fixed factors only twist or remove coefficients,
without increasing their norm.
Remaining row coprimality restrictions may be dropped
in these nonnegative upper bounds.

Using $F\ge S_1^2$, \eqref{eq:theta6} gives
\[
 U_1\ll Z^\rho\{A/S_1+BS_1+MS_1^{-2/3}
                         +A^{1/3}B^{5/6}S_1^{1/3}\}.
\]
For $T<S_1\le A^{1/3}$ this is at most
$Z^\rho\{A/T+MT^{-2/3}+A^{1/3}B+A^{4/9}B^{5/6}\}$.
For $S_1>A^{1/3}$, the inequalities
\[
 S_1/F\le S_1^{-1},\quad
 S_1/(FF_2)\le S_1^{-2},\quad
 S_1/(FF_2^{1/3})\le S_1^{-4/3}
\]
and \eqref{eq:xi3} give
$U_2\ll Z^\rho\{A+A^{1/3}B+A^{5/9}B^{2/3}\}$.
The terms independent of $T$, divided by $M$, are bounded by
\[
 B^{-\delta/6},\quad B^{-\delta/2},\quad
 B^{-1/54-7\delta/18},\quad B^{-1/27-5\delta/18},
\]
respectively. Since $Z\le B^3$, each is $O(Z^{-g})$.
Also $A/T\le MT^{-2/3}$.
There are only $O(\log^4(2A))$ dyads.
Actual and dyadic product cutoffs differ by a fixed
factor; use a smaller preliminary power loss to absorb
these factors and the logarithms.
This proves \eqref{eq:higher-tail}, following the
factor-selection method of \cite[Lemma 3.4]{BGL}.
\end{proof}

\subsection{A structured squarefree-row starting point}

Set $L_0=A^{2/3}$, $Z=AL_0$, and fix
$\delta_*=1/44$, $g=\delta_*/18$.
The pair $(A,L_0)$ is in the range of Lemma \ref{lem:tails}.
By Corollary \ref{cor:column-anchor}, the squarefree-column
Gauss vector has weighted all-row energy
$\gg\mathcal M=A^{5/6}L_0^{-2/3}$.

\begin{lemma}\label{lem:structured-anchor}
For $0<\eta\le g/100$ and sufficiently large $A$,
there are a unit $\epsilon$ and a primary $h$ prime to $S$,
with $\N h\ll Z^{21\eta}$, such that, for $q=A/\N h$,
\begin{equation}\label{eq:structured-anchor}
 \sum_{\substack{a\ {\rm primary,\ squarefree}\\(a,S)=1}}
 w(a/\sqrt q)
       |L_0^{1/3}F^{\rm sf}_{1,0;W}(\epsilon h a;L_0)|^2
                           \gg_\eta A^{5/6}Z^{-5\eta}.
\end{equation}
\end{lemma}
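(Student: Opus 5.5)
We start from Corollary \ref{cor:column-anchor} at $(A,B)=(A,L_0)$, where $L_0=A^{2/3}$ lies in the range $B\le A\le B^{2-\delta}$. After multiplying by $L_0^{2/3}$, it gives
\[
 \sum_{m\ne0,(m,S)=1}w(m/\sqrt A)\,|L_0^{1/3}F^{\rm sf}_{1,0;W}(m;L_0)|^2\gg A^{5/6}.
\]
The plan is to decompose every row $m$ as $m=\epsilon\,t^6a_1a_2^2a_3^3a_4^4a_5^5$. Here $\epsilon$ is a unit, the part supported on $S$ is excluded by $(m,S)=1$, and the $a_j$ are pairwise coprime squarefree primary ideals. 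We set $h=t^6a_2^2a_3^3a_4^4a_5^5$ and $a=a_1$. The aim is to show that rows with $\N h$ large carry only a negligible share of the energy. Pigeonholing over the remaining $\N h\ll Z^{21\eta}$ then produces one $(\epsilon,h)$ retaining a $Z^{-5\eta}$ share, which is \eqref{eq:structured-anchor}.

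To control the large-$h$ rows we regard the values $F^{\rm sf}_{1,0;W}(m;L_0)$ as a character sum $P_\lambda$ with squarefree columns. Opening $G(s,-m)$ via Lemma \ref{lem:prime-power}, for squarefree $s$ coprime to $m$ we have $G(s,-m)=\overline{\chi_s(-m)}G(s,1)$ up to the normalization. Hence $F^{\rm sf}(m;L_0)=\sum_s\lambda_s J_6(m,s)$, with $\lambda_s=L_0^{-1}U(q_s/L_0)(q_s/L_0)^{-1/2}G(s,1)\overline{\chi_s(-1)}$, after reciprocity \eqref{eq:reciprocity} and the fixed ray class. Noncoprime columns need a separate treatment: for $p\mid(s,m)$ the factor $G(p,-m)$ is $P^{1/2}\cdot$(sign) or zero. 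We handle these by the same powerful/divisor decomposition as in Section \ref{sec:columns}, or simply absorb them into a divisor-bounded family of twisted vectors, each of which has norm
\[
\|\lambda\|_2^2\ll L_0^{-1}.
\]
Multiplying by $L_0^{2/3}$ gives normalized norm $L_0^{-1/3}$. Lemma \ref{lem:tails} then bounds the energy of rows with $\N t>T_6$ by $Z^\rho M(T_6^{-4}+Z^{-g})L_0^{-1/3}$, and rows with $\prod_{j\ge2}\N a_j>T$ by $Z^\rho M(T^{-2/3}+Z^{-g})L_0^{-1/3}$. Here
\[
ML_0^{-1/3}=A^{5/6}L_0^{1/3}L_0^{-1/3}\cdot L_0^{0}=A^{5/6}
\]
after tracking the $L_0^{1/3}$ weights. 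We choose $T_6=Z^{\eta}$ and $T=Z^{3\eta}$, with $\rho=\eta/10$. Both tails are then $\ll A^{5/6}Z^{-\eta}$, which is smaller than the anchor energy. Since $\N h\le T_6^6T^5$ is too large, we refine: on the surviving rows the factor $a_2^2\cdots a_5^5$ has $\N\le T^5$, but because the tail only controls $\prod\N a_j$, we use $\N(a_2^2\cdots a_5^5)\le(\prod\N a_j)^5$. To land at $21\eta$ we retune the constants, taking $T_6=Z^{\eta/2}$ and $T=Z^{18\eta/5}$, say. The weighted sum $w(m/\sqrt A)$ is bounded by the unweighted sum over $\N m\le A$ because $w\le1$.

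On the remaining rows the energy is $\gg A^{5/6}$. The number of pairs $(\epsilon,h)$ with $\N h\ll Z^{21\eta}$ is $\ll Z^{21\eta/6+o(\eta)}\cdot$(divisor factors). For each such pair the rows are $\epsilon h a$ with $a$ squarefree, coprime to $h$ and $S$, and with weight $w(\epsilon h a/\sqrt A)=w(a/\sqrt{q})$ by radiality of $w$ and the definition $q=A/\N h$. Dropping the coprimality $(a,h)=1$ only adds nonnegative terms. Pigeonhole then gives a pair with energy $\gg A^{5/6}Z^{-5\eta}$.

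The main obstacle is the step that identifies $F^{\rm sf}$ on non-coprime rows with a $P_\lambda$ of controlled norm, so that Lemma \ref{lem:tails} applies. The reason is that Gauss sums at primes dividing both $s$ and $m$ are not of character form. I would first try to split by $\gcd(s,m)$, writing $s=cs'$ with $c\mid m$ squarefree. Then $G(c,-m)$ is a bounded multiple of $\mu(c)$-type factors, and the remaining sum is a genuine $P_{\lambda^{(c)}}(m/c\cdot\text{unit})$ with norm $\ll L_0^{-1}q_c^{0}$. The $c$-sum costs only $d(m)\ll Z^{o(1)}$ via Cauchy--Schwarz.
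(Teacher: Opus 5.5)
Your route matches the paper's in outline. You identify the squarefree-column Gauss polynomial with a character polynomial of squared norm $O(L_0^{-1})$, apply Lemma~\ref{lem:tails} at $(A,L_0)$, pigeonhole over the surviving head, assemble $h=t^6a_2^2a_3^3a_4^4a_5^5$, and drop the coprimality conditions on $a_1$ using radiality of $w$.

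\textbf{The final pigeonhole fails as written.} The number of $h$ with $\N h\ll Z^{21\eta}$ is not $Z^{21\eta/6+o(\eta)}$. The squares $a_2^2$ of squarefree ideals alone give $\gg Z^{21\eta/2}$ such $h$. Pigeonholing over $h$ by norm therefore yields only $A^{5/6}Z^{-21\eta/2}$, far short of $Z^{-5\eta}$. You must instead pigeonhole over the head tuples $(t,a_2,\dots,a_5)$, with $\N t\le T_6$ and $\prod_{j\ge2}\N a_j\le T$. There are $O(T_6T\log^3(2T))$ of these. With $T_6=Z^{\eta}$ and $T=Z^{3\eta}$ this is $O_\eta(Z^{4\eta})$, and then $\N h\le T_6^6T^5=Z^{21\eta}$ exactly. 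So your first choice of cutoffs was not too large, and no retuning is needed.

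\textbf{The tail step has a smaller slip.} The bound \eqref{eq:higher-tail} is for one fixed vector on sixth-power-free rows. A row $t^6u$ instead sees the masked vector $\lambda^{(t)}$. You can repair this in either of two ways.
\begin{enumerate}
\item Sum the higher-factor tail over the $O(T_6)$ head values of $t$. This costs a factor $T_6$, which is still acceptable because $T_6T^{-2/3}=Z^{-\eta}$ and $\eta\le g/100$.
\item Proceed as the paper does. Pigeonhole first in $t$ to get one $t$ with energy $\gg\mathcal M Z^{-\eta}$. Then apply \eqref{eq:higher-tail} to that single $\lambda^{(t)}$, whose tail is $O(\mathcal M\{Z^{-7\eta/4}+Z^{\eta/4-g}\})$. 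Finally pigeonhole over the $O_\eta(Z^{4\eta})$ tuples.
\end{enumerate}

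\textbf{The ``main obstacle'' you name does not arise.} For squarefree $s$ and $p\mid(s,m)$, Lemma~\ref{lem:prime-power} with $a=1$ gives $G(p,-m)=0$, since it is a complete sum of the nonprincipal local character. The value $P^{1/2}$ times a sign belongs to the principal case $6\mid a$, which squarefree columns never meet. Hence $G(s,-m)=G(s,1)\overline{\chi_s(-m)}$ holds including the zero values. On a unit class $m=\epsilon a$, the \emph{conjugate} of the Gauss polynomial is exactly $P_\lambda(a)$, with $\lambda_b$ carrying $\overline{G(b,1)}\chi_b(-\epsilon)$. No gcd splitting is needed.
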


\begin{proof}
Split literal rows into six unit classes and choose one
carrying at least a sixth of the energy.
Primitive Gauss evaluation identifies its conjugate
polynomial with $P_\lambda(m)$, where
\[
 \lambda_b=L_0^{-1}W(q_b/L_0)(q_b/L_0)^{-1/2}
       \overline{G(b,1)}\chi_b(-\epsilon)
                   1_{b\ {\rm sf},\ (b,S)=1,\ [b]=\sigma_{\rm ray}}.
\]
It has squared coefficient norm $O(L_0^{-1})$ and
weighted row energy $\gg\mathcal M$.

Take $\rho=\eta/4$, $T_6=Z^\eta$ in
\eqref{eq:t6-tail}. The tail is $o(\mathcal M)$.
One of the $O(Z^\eta)$ remaining ideals $t$ thus has
weighted energy $\gg\mathcal M Z^{-\eta}$.
Keep the original weight in this head.
For its fixed vector $\lambda^{(t)}$, apply
\eqref{eq:higher-tail} at the larger cutoff $A$ and
$T=Z^{3\eta}$. The weighted tail is at most
\[
 O\bigl(\mathcal M
       \{Z^{-7\eta/4}+Z^{\eta/4-g}\}\bigr),
\]
which is smaller than the retained energy.
There are $O(T\log^3(2T))=O_\eta(Z^{4\eta})$
remaining tuples $(a_2,a_3,a_4,a_5)$.
One carries weighted energy $\gg_\eta\mathcal M Z^{-5\eta}$.

Set $h=t^6a_2^2a_3^3a_4^4a_5^5$.
Then $\N h\ll Z^{6\eta}T^5=Z^{21\eta}$.
Its coefficient vector is exactly $\lambda_b\chi_b(h)$,
including every zero at primes of $t$.
Dropping the remaining coprimality conditions on the
squarefree $a_1$ enlarges its positive row sum.
Radiality leaves precisely $w(a_1/\sqrt{A/\N h})$.
Multiplication by $L_0^{2/3}$ proves
\eqref{eq:structured-anchor}.
\end{proof}

\subsection{Transport and the full parameter range}

\begin{proof}[Proof of Theorem \ref{thm:optimal}]
First assume $A\ge B$ and
$A^{1/2+\vartheta}\le B\le A$ for a fixed
$0<\vartheta<1/6$.
Choose
\[
 0<\eta\le\min\{\eps/100,\vartheta/100,g/100,1/1000\},
\]
and select $\epsilon,h,q$ by Lemma \ref{lem:structured-anchor}.
In particular $q\le A$ and
$D=\N\rad h\le\N h\ll A^{35\eta}$.
For squarefree columns the exact numerator identity is
\[
 F^{\rm sf}_{1,0;W}(\epsilon h a;L)
       =F^{\rm sf}_{\rad h,\mathbf e;W}(\epsilon a;L),
                     \qquad e_p=(-v_p(h)\bmod6).
\]
Primes dividing $h$ are excluded from the columns;
even a six-divisible exponent retains this mask.

On the same squarefree primary rows with weight
$w(a/\sqrt q)$, put
$H_L(a)=L^{1/3}F^{\rm sf}_{1,0;W}(\epsilon h a;L)$.
The unit change of literal row preserves the full
radial norm. Lemma \ref{lem:scale}, followed by
restriction to this row subset, yields
\[
 \|H_B-H_{L_0}\|_{\rm sf,q}^2
 \ll_\zeta A^{5/6-\vartheta/6+\zeta}
                       +A^{2/3+35\eta+\zeta}.
\]
Here both endpoints lie between $A^{1/2+\vartheta}$ and $A$.
Take the source loss small enough that $\zeta=\eta$,
including the telescoping logarithms.
The anchor energy is
$\|H_{L_0}\|_{\rm sf,q}^2\gg A^{5/6-25\eta/3}$.
The differences between this exponent and the two
error exponents are
\[
             \vartheta/6-28\eta/3>0,\qquad
                         1/6-133\eta/3>0 .
\]
They are at least $11\vartheta/150$ and $367/3000$,
respectively.
Thus the triangle inequality in this same row space
gives $\|H_B\|_{\rm sf,q}^2\gg A^{5/6-25\eta/3}$.

At the new column length the coefficient vector is
\[
 \lambda_b'=B^{-1}W(q_b/B)(q_b/B)^{-1/2}
       \overline{G(b,1)}\chi_b(-\epsilon h)
                    1_{b\ {\rm sf},\ (b,S)=1,\ [b]=\sigma_{\rm ray}}.
\]
It has squared norm $O(B^{-1})$.
It is nonzero by the retained energy.
Since $q=A/\N h\le A$ and $w\le1$, its Rayleigh quotient gives
\[
                  \Xi_{6,S}(A,B)
                       \gg A^{5/6-25\eta/3}B^{1/3}.
\]
The loss is at most $\eps/12$.

For all $A\ge B$, take
$\vartheta=\min\{\eps,1/12\}$.
If $B<A^{1/2+\vartheta}$, then
$A^{5/6}B^{1/3}\le A^{1+\vartheta/3}$.
The unit column and the positive-density count of
squarefree ideals give $\Xi_{6,S}(A,B)\gg_S A$,
which suffices with $(AB)^{-\eps}$.
Otherwise use the preceding argument.
The other mixed term is no larger for $A\ge B$,
and $B\le A$, so this proves the whole sum.

For $A<B$, split the matrix into its finitely many
row and column ray blocks.
Reciprocity \eqref{eq:reciprocity} identifies each
transposed block up to a scalar of modulus one.
Transposition preserves the spectral norm, and the
full norm is bounded by the sum of the block norms.
Thus $\Xi_{6,S}(A,B)$ and $\Xi_{6,S}(B,A)$ are
comparable up to a fixed factor.
Finally use \eqref{eq:submatrix}.
Bounded parameter ranges are covered by the unit
row and column after decreasing the constant.
At balance, rename the loss to obtain the stated
$Q^{7/6-\eps}$ estimate.
\end{proof}

\begin{remark}
The proof compares the column scales of one small-twist
Gauss coefficient formula, bounding the centered difference
on all rows before restricting to squarefree rows.
The selected row annulus is $A/\N h$, which may differ
from $A$ by a small power.
\end{remark}

\section{Zero density from unmarked moments}
\label{sec:zero-density}

We now begin the structured-coefficient branch.
In this and the next section, $S$ may be any fixed
finite set containing the primes above $6$.
We keep the original presentations
$\psi_u(n)=(u/n)_6$ and their collision zeros.
Starting from the plain moment in Section \ref{sec:setup},
we add the inverse moment and buffered detector, count
rows in zero-location bins, and prove the density bound.
Section \ref{sec:prime-moments} then converts those bin
counts into estimates for prime coefficients.

\subsection{The inverse moment and buffered detector}

The detector converts a far-right zero into simultaneous
large values of a M\"obius-weighted polynomial and a plain
character polynomial. Their moment bounds then limit
the number of rows on which this can occur.

Fix a finite group $\Theta$ of ray characters supported
on $S$, containing the required fixed twists.
For a physical row $u$, let
\[
 \mathcal X_u=
 \{\nu(n)\chi_n(u)^\epsilon:
             \nu\in\Theta,\ \epsilon\in\{-1,1\}\}.
\]
This finite family collects the fixed twists and the
two orientations needed by the detector on the same row.
The inverse orientation is still zero on nonunits.
For $\psi\in\mathcal X_u$, let $Q_\psi$ be the norm of
the conductor of $\psi^*$, and let $E_u$ be the set of
Euler primes removed from its primitive $L$-function
by the original zero extension. Then
\begin{equation}\label{eq:original-presentation}
 Q_\psi\ll_S Q,\qquad
 L_{\rm orig}(s,\psi)=L(s,\psi^*)
          \prod_{p\in E_u}(1-\psi^*(p)(\N p)^{-s}),
       \qquad \N\!\prod_{p\in E_u}p\ll_S Q .
\end{equation}
This follows from reciprocity and the fixed $S$-data.
Every deleted factor is nonzero for $\Rea s>0$.
On a nonunit sixth-power-free row outside $S$, a
good-prime valuation $j\in\{1,\ldots,5\}$ has local
order $6/\gcd(6,j)>1$. A fixed $S$-twist cannot
cancel it. Thus every presentation in this family
induces a nonprincipal character.

\begin{lemma}[Unmarked inverse estimate, \cite{OAI}]
\label{lem:inverse-input}
For $r$ in a prescribed bounded nonnegative range, put
\[
 M_\psi(r;V)=Q^{-r/2}
             \sum_n\mu(n)\psi(n)V(\N n/Q^r).
\]
For each fixed finite presentation label,
\begin{equation}\label{eq:inverse-moment}
 \sum_{u\in\mathscr U_S(Q)}|M_{\psi_u}(r;V)|^2
 \ll_\eta Q^{E(r)+\eta}(1+H)^C,\qquad
 E(r)=\max\{1,(1+5r)/6\}.
\end{equation}
Here $H$ bounds the separated norm heights.
The constant uses finitely many smooth seminorms.
The permitted rowwise real powers, heights, and smooth
scale parameters have a fixed polynomial height cost.
\end{lemma}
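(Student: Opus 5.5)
Since this lemma is imported from \cite{OAI}, the plan below is a reconstruction of how one would verify \eqref{eq:inverse-moment} from the tools of Section~\ref{sec:setup}. It is not a proof taken from the source.

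\emph{Baseline.} Applying \eqref{eq:theta6} to the coefficient vector $\mu(n)V(\N n/Q^r)$, which has squared norm $\asymp Q^r$, with rows $u$ of norm $\asymp Q$ and columns of norm $Q^r$, gives $Q^{E_0(r)+\eta}$ with
\[
 E_0(r)=\max\{1,\ r,\ 5/6+r/3,\ 1/3+5r/6\}.
\]
The target $E(r)=\max\{1,(1+5r)/6\}$ keeps only the row term $1$ and lowers the Gauss-bias term by $Q^{1/6}$. The middle terms $r$ and $5/6+r/3$ must also disappear. So the proof has to exploit the M\"obius weight, and cannot treat $\mu$ as an arbitrary vector.

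\emph{Decomposition of $\mu$.} First, use \eqref{eq:original-presentation} and reciprocity \eqref{eq:reciprocity} to rewrite each $\psi\in\mathcal X_u$ as a fixed ray twist times $\chi_n(u)^{\pm1}$. The deleted Euler factors and the collision zeros then become fixed punctures of norm $\ll Q$, which is exactly the data permitted in Theorem~\ref{thm:plain-input}. Next, apply the Heath-Brown identity with a bounded number $k$ of factors to $\mu(n)V(\N n/Q^r)$. After smooth dyadic partitions and Mellin separation of the profile, whose height cost is the $(1+H)^C$ factor, this gives $O(Q^\eta)$ pieces. Each piece is a product of at most $2k$ one-variable sums: smooth sums $\sum\psi(n)V_j(\N n/N_j)$ of arbitrary length, and M\"obius sums of length at most $Q^{r/k}$.
\begin{itemize}
\item \emph{At least two long smooth factors.} Use Cauchy--Schwarz to reduce to the product moment \eqref{eq:plain-product}. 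With $K_{\rm row}\asymp Q$ and $R\ll Q^\eta$, since the only moving support is the row itself, each pairwise product has mean square $\ll Q^{1+\eta}$ after normalization. The short factors are bounded trivially in the normalized scale $Q^{-r/2}$.
\item \emph{Exactly one long smooth factor.} Use Corollary~\ref{cor:plain-second}.
\end{itemize}
In both cases the contribution is $\ll Q^{1+\eta}$. Note that nonprincipality of $\psi^*$ holds on every row, as established just above the lemma.

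\emph{Type II ranges and main obstacle.} This is where I expect the main difficulty. Suppose every smooth factor is short, so the piece is genuinely bilinear, $\sum_{mn}\alpha_m\beta_n\psi(mn)$ with $M,N\ge Q^{\eta}$. Then I would apply \eqref{eq:theta6} only after factoring. Fix the row $u$. By Cauchy--Schwarz over $m$, the row sum becomes a bilinear form in $(u,m)$ against $\beta$. Apply the sextic sieve in the variables $(um,n)$, with row length $QM$ and column length $N$, using the sixth-power-free reduction of Lemma~\ref{lem:tails} to pass to admissible rows. This turns the $1/3+5r/6$ term into something like $(QM)^{1/3}N^{5/6}/M$. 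Optimizing the split $M=Q^{r-\beta}$ so that this beats $Q^{(1+5r)/6}$ should reproduce $E(r)$; the convexity of the sieve exponents in the split is what produces the linear function $(1+5r)/6$.

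If this optimization leaves a gap near $r\approx1$, I would fall back on the Gauss-sum diagonal as in Section~\ref{sec:gram}. That is, I would apply Poisson in $n$ and evaluate the exceptional sixth-power frequencies explicitly, as in \eqref{eq:sixth-root}. The exceptional frequencies contribute exactly $Q^{-r}\cdot Q\cdot (Q^r)^{5/6}\cdot Q^{-1/6}$-type terms. The nonexceptional part is again controlled by Corollary~\ref{cor:plain-second}, uniformly in the moving prime exclusions, just as in Lemma~\ref{lem:nonexceptional}.
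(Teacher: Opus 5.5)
The paper does not prove this lemma internally. It imports it from \cite[Lemma 17.6]{OAI}, using the parameter-Sobolev argument of that source's Lemma 4.5 for the uniformity. Your reconstruction therefore has to stand on its own, and it has a genuine gap at exactly the step you flag.

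\textbf{The Type II range.} Your baseline $E_0(r)$ is correct, and so is your diagnosis that $\mu$ must be used beyond arbitrary coefficients. No step of your sketch actually does this in the Type II range.
\begin{itemize}
\item Since $\psi_u(n)=\chi_n(u)$ is completely multiplicative in the column variable, a Type II piece is just the product $A_uB_u$ of two polynomials at the same row. No factor $\chi_n(m)$ appears, so there is no bilinear form in $(um,n)$ to which \eqref{eq:theta6} applies. The bound $(QM)^{1/3}N^{5/6}/M$ therefore has no derivation.
\item For arbitrary coefficients, the available route is H\"older, $\sum_u|A_uB_u|^2\le(\sum_u|A_u^2|^2)^{1/2}(\sum_u|B_u^2|^2)^{1/2}$, followed by \eqref{eq:theta6}. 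With $M=Q^\alpha$, $N=Q^\beta$, $\alpha+\beta=r$, this gives the normalized exponent $\tfrac12(f(2\alpha)+f(2\beta))$, where $f(x)=\max\{1,x,5/6+x/3,1/3+5x/6\}$.
\item Since $f$ is convex, this is at least $f(r)=E_0(r)$ for every split, and $E_0(r)>E(r)$ whenever $r>1/2$. (For $r\le1/2$ the baseline already suffices.) So no combination of the arbitrary-coefficient sieve reaches $E(r)$ on balanced Type II pieces.
\item The Poisson fallback is also unavailable. Every Poisson step in the paper is taken against a smooth profile with periodic arithmetic coefficients. In Section 3 it is over the smoothly weighted rows of a Gauss polynomial. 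Type II pieces carry M\"obius-convolution coefficients, which have neither property. In addition, the exceptional size you quote, $Q^{-r}\cdot Q\cdot Q^{5r/6}\cdot Q^{-1/6}=Q^{5/6-r/6}$, does not match $E(r)$.
\end{itemize}
The missing ingredient is precisely the content of the cited lemma.

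\textbf{The Type I step.} This also fails as written.
\begin{itemize}
\item Bounding the short M\"obius factors trivially costs their total length $\prod_lM_l$ in the normalized bound. That is a positive power of $Q$, not $Q^\eta$, unless ``long'' means length at least $Q^{r-\eta}$. In that case everything else is pushed into the Type II range above.
\item The correct treatment is H\"older: the fourth moment from \eqref{eq:plain-product} for the smooth factor, against \eqref{eq:theta6} for the square of the remaining product. This returns $Q^{1+\eta}$ only when that product has length at most $Q^{1/4}$, since $Q^{5/6}M^{2/3}\le Q$ forces this.
\item With three or more long smooth factors, Cauchy--Schwarz does not reduce to \eqref{eq:plain-product}, which controls only $|S_1S_2|^2$.
\end{itemize}
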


The polynomial $M_\psi$ is a smooth, normalized segment
of the M\"obius-weighted Dirichlet series for the reciprocal
of $L_{\rm orig}(s,\psi)$.
The inverse estimate is \cite[Lemma 17.6]{OAI}, including its last
uniformity assertion and the parameter Sobolev argument
of its Lemma 4.5.
For the plain polynomial, retain the length convention
of Theorem \ref{thm:plain-input} and take $N=Q^m$:
\[
             S_\psi(Q^m;V)=Q^{-m/2}\sum_n\psi(n)V(\N n/Q^m).
\]
Theorem \ref{thm:plain-input}, with no moving extra twist,
gives
\begin{equation}\label{eq:plain-fourth}
             \sum_{u\in\mathscr U_S(Q)}
                    |S_{\psi_u}(Q^m;V)|^4
                    \ll_\eta Q^{1+\eta}(1+H)^C .
\end{equation}
Both estimates retain the same allowed presentations and
zero extensions.

We group rows by the rightmost zero visible in a sequence
of height windows. The label $a$ records its real-part
bin, and $i$ selects a window whose enlargement has
controlled rightmost zero. This buffer will provide
room for the finite-height contour shift in the
prime-sum estimate.

\begin{lemma}[Buffered bins and simultaneous witnesses]
\label{lem:detector-input}
Let $a_0=51/100$, choose a sufficiently small fixed
$0<e<10^{-3}$, and put $H_1=Q^\tau$,
$0<\tau\le1/100$. Each row has one of $O_e(1)$ bins
$(i,a)$ with
$a\in(a_0+e\mathbb Z_{\ge0})\cap[a_0,1]$.
For $I=\lceil1/e\rceil+2$, define
\[
 M_j(u)=\max\bigl(\{a_0\}\cup
 \{\Rea\rho:\psi\in\mathcal X_u,\ L(\rho,\psi^*)=0,\
       \Rea\rho\ge a_0,\ |\Ima\rho|\le3jH_1\}\bigr).
\]
The assigned $1\le i<I$ satisfies
\begin{equation}\label{eq:bin}
                   a\le M_i(u)<a+e,\qquad M_{i+1}(u)<a+2e.
\end{equation}
Let $\mathcal B_{i,a}$ be the set of rows
$u\in\mathscr U_S(Q)$ assigned to $(i,a)$.
Bins with $a=a_0$ are called floor bins; those with
$a>a_0$ are above-floor bins.
For an above-floor bin put $\delta=2a-1$.
For every $t\in[1,3/2]$ and arbitrarily small prescribed
loss $\eta>0$, the source detector supplies a common
presentation $\psi\in\mathcal X_u$ and dyadic witnesses
$M_\psi(r;V_M)$, $S_\psi(Q^m;V_S)$ with
\begin{equation}\label{eq:witnesses}
 \begin{gathered}
 r\le t+O(1/\log Q),\qquad r+m\ge t-O(1/\log Q),\\
 0\le m\le\tfrac12+O(\eta),\qquad
 r\ge t-\tfrac12-O(\eta),\\
 |M_\psi(r;V_M)|^2\gg Q^{\delta r-\eta},\qquad
 |S_\psi(Q^m;V_S)|^2\gg Q^{\delta m-\eta}.
 \end{gathered}
\end{equation}
Their profiles have the finite-dimensional smooth dependence
specified in the source and heights $O_e(H_1)$.
There are $O((\log Q)^2)$ dyadic pairs and finitely many
presentation labels.
\end{lemma}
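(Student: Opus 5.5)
The statement has two parts. The bin assignment \eqref{eq:bin} is elementary pigeonholing on the window maxima. The simultaneous witnesses \eqref{eq:witnesses} are the output of the source buffered detector \cite[Proposition 8.3]{OAI}, applied with the bin data as its input. I would prove them in that order.

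\emph{Bins.} For each row $u$ the function $j\mapsto M_j(u)$ is nondecreasing, because the window $|\Ima\rho|\le3jH_1$ grows with $j$. It takes values in $[a_0,1]$. Put $k_j=\lfloor (M_j(u)-a_0)/e\rfloor$, which lies in $\{0,\ldots,\lceil1/e\rceil\}$.
\begin{itemize}
\item If $k_{j+1}\ge k_j+2$ held for every $1\le j<I$, then $k_I\ge2(I-1)>\lceil1/e\rceil$, which is impossible.
\item Hence there is a least $i<I$ with $k_{i+1}\le k_i+1$.
\item Set $a=a_0+ek_i$. Then $a\le M_i<a+e$ and $M_{i+1}<a+2e$, which is \eqref{eq:bin}.
\end{itemize}
Taking the least such $i$ makes the assignment a function of $u$, and there are $O(I/e)=O_e(1)$ bins. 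The finite family $\mathcal X_u$ enters only through the maximum. By \eqref{eq:original-presentation}, all its members induce nonprincipal characters with conductor $\ll_S Q$ and deleted Euler factors nonvanishing for $\Rea s>0$. So zeros of $L_{\rm orig}$ and of $L(s,\psi^*)$ agree in the region considered.

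\emph{Witnesses.} For an above-floor bin, \eqref{eq:bin} supplies two facts.
\begin{itemize}
\item There is a presentation $\psi\in\mathcal X_u$ and a zero $\rho$ of $L(s,\psi^*)$ with $\Rea\rho\ge a$ and $|\Ima\rho|\le3iH_1$.
\item No zero of any member of $\mathcal X_u$ lies in $\Rea s\ge a+2e$, $|\Ima s|\le3(i+1)H_1$. This gives a zero-free buffer of height $3H_1=3Q^\tau$ around $\rho$.
\end{itemize}
These are exactly the hypotheses of the source detector, with $\delta=2a-1$.

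I would then follow its mechanism. Take a smoothed identity of length $Q^t$, $t\in[1,3/2]$, built from $L_{\rm orig}(s,\psi)\cdot L_{\rm orig}(s,\psi)^{-1}=1$ and evaluated at $s=\rho$. Move the contour only inside the buffered rectangle: the buffer controls the truncation in height, and the $O_e(H_1)$ profile heights come from this step. This shows that a product of a M\"obius segment of length $Q^r$ and a plain segment of length $Q^m$, with $r+m\ge t-O(1/\log Q)$, is large at $\rho$. After the normalization $Q^{-r/2}$, $Q^{-m/2}$, a product $\gg Q^{\delta(r+m)/2-\eta}$ in modulus results. A dyadic decomposition into $O((\log Q)^2)$ pairs and pigeonholing select one pair.

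\emph{Main obstacle.} The delicate point is splitting the large product into two individually large witnesses with $0\le m\le\frac12+O(\eta)$. I would first write the plain factor as the shorter one and move any plain length beyond $Q^{1/2+\eta}$ into the M\"obius range by the functional equation or Pólya--Vinogradov for conductors $\ll Q$. This forces $r\ge t-\frac12-O(\eta)$. Next, I would use the pointwise upper bounds that the zero-free buffer gives for each factor, of the form $|M_\psi(r)|^2\ll Q^{\delta r+\eta}$ and likewise for $S_\psi$. Combined with the product lower bound, these yield the lower bounds on each factor separately. Here one must keep the losses $O(\eta)$ uniform over the finitely many presentation labels and profile parameters, which is what the source's uniformity assertions are cited for.
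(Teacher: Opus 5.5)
Your proposal follows the paper's proof: the bins come from pigeonholing the nondecreasing maxima $M_1(u)\le\cdots\le M_I(u)$ in $[a_0,1]$ (your count of jumps in $\lfloor (M_j-a_0)/e\rfloor$ is a cleaner form of the paper's remark that two successive maxima differ by at most $e$, followed by rounding), and the simultaneous witnesses are taken directly from \cite[Lemmas 8.1--8.2 and Proposition 8.3]{OAI}, applied to the zero and zero-free buffer furnished by \eqref{eq:bin}. The reconstruction of the detector's mechanism that you add is not needed and should not replace the citation, because as written it is only heuristic: P\'olya--Vinogradov alone does not make plain segments beyond $Q^{1/2+\eta}$ negligible at a zero (that needs $L(\rho,\psi^*)=0$ together with the functional equation), and the buffer is zero-free only for $\Rea s\ge a+2e$, so it bounds each factor only up to a loss $Q^{O(e)}$ rather than $Q^{\eta}$.
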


The assertions are \cite[Lemmas 8.1--8.2 and Proposition 8.3]{OAI}.
The bin construction takes successive maxima in increasing
height windows.
The maxima are nondecreasing and bounded by one; two
successive values differ by at most $e$, giving
\eqref{eq:bin} after rounding.
If all zeros lie below the floor, all rows are floor rows
and no above-floor witness is needed.

\paragraph{Order of choices.}
First fix the requested final power loss, preliminary
moment and detector losses, and a sufficiently small $e$.
Fix the finite moment, derivative, and parameter-Sobolev
orders. Then choose $\tau>0$ so small that all fixed
powers of $1+O_e(H_1)$ fit into the reserved power loss
and the detector's height hypotheses hold.
Only after $\tau$ is fixed are external Mellin-tail
orders chosen. Increasing those orders differentiates
external profiles and does not increase the already
fixed internal height order.

\subsection{Counting rows}

The two witnesses give competing upper bounds for the
number of rows in a bin. Optimizing these bounds over
their lengths produces the decreasing count exponent
\[
 \RR(a)=
 \begin{cases}
 \displaystyle\frac{31-32a}{21-12a},&a_0<a\le11/12,\\[4pt]
 2(1-a),&11/12\le a\le1.
 \end{cases}
\]
Its first branch agrees with $R(\sigma)$ in
Theorem \ref{thm:prime-density} when $a=\sigma$.
The extension to larger $a$ is needed when summing
the contributions of all bins to the prime moment.

\begin{proposition}\label{prop:bad-rows}
Every above-floor bin satisfies
$\#\mathcal B_{i,a}\ll_\eps Q^{\RR(a)+\eps}$.
Floor bins have $O(Q)$ rows.
\end{proposition}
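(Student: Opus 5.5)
The plan is a standard large-values count. Floor bins are trivial, since $\#\mathscr U_S(Q)\ll Q$. Now fix an above-floor bin $(i,a)$, put $\delta=2a-1>0$, and fix a parameter $t\in[1,3/2]$ to be optimized at the end. Lemma \ref{lem:detector-input} attaches to each $u\in\mathcal B_{i,a}$ a presentation label, a dyadic pair $(r,m)$, and profiles from a finite-dimensional smooth family, and these produce the witnesses \eqref{eq:witnesses}. There are only finitely many labels and $O((\log Q)^2)$ dyadic pairs, so pigeonholing gives a sub-bin $\mathcal B'$ with $\#\mathcal B'\gg(\log Q)^{-2}\#\mathcal B_{i,a}$. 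On $\mathcal B'$ the label and $(r,m)$ are common. The profiles still vary with $u$ in their finite-dimensional parameter. I would handle this by a parameter Sobolev argument, as in the uniformity clause of Lemma \ref{lem:inverse-input}: dominate $|M_\psi(r;V_u)|^2$ by an average over a fixed smooth family of derivatives, at polynomial cost in $1+H$ with $H=O_e(H_1)=O(Q^{\tau})$. That cost is absorbed into $Q^{\eps}$ by the stated order of choices.

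Next comes the two competing bounds. Summing the first witness over $\mathcal B'$ and applying \eqref{eq:inverse-moment} gives
\[
\#\mathcal B'\,Q^{\delta r-\eta}\ll Q^{E(r)+2\eta},
\qquad\text{so}\qquad
\#\mathcal B'\ll Q^{\max\{1,(1+5r)/6\}-\delta r+3\eta}.
\]
Summing the fourth power of the second witness and applying \eqref{eq:plain-fourth} gives
\[
\#\mathcal B'\,Q^{2\delta m-2\eta}\ll Q^{1+2\eta},
\qquad\text{so}\qquad
\#\mathcal B'\ll Q^{1-2\delta m+4\eta}.
\]
Since both hold simultaneously, $\#\mathcal B'\ll Q^{\min\{\cdot,\cdot\}+O(\eta)}$. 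The constraints in \eqref{eq:witnesses} give $r\ge t-m-o(1)$ and $r\le t$. The first bound has $r$-derivative $5/6-\delta$ or $-\delta$, so its worst case over admissible $r$ is an endpoint. Taking the worst case over the unknown $(r,m)$ consistent with \eqref{eq:witnesses}, and then optimizing over the free $t$, should produce $\RR(a)$. Heuristically, the balance $1-2\delta m=(1+5r)/6-\delta r$ with $r=t-m$, minimized over $t\in[1,3/2]$, gives a rational function of $a$ with linear numerator and denominator, and I would check that it is $(31-32a)/(21-12a)$. For $a\ge11/12$ the relevant regime switches, either to the plateau $E(r)=1$ or to the constraint $m\le1/2$, and that branch should give $2(1-a)$. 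Here $\eta$ is chosen after $\eps$, and $e,\tau$ are fixed.

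I expect the optimization to be the main obstacle. The detector only guarantees some $(r,m)$ in the allowed region, so for each $t$ the bound is a max over that region of a min of two piecewise-linear exponents. One then has to choose $t$, possibly depending on $a$, so that this max-min equals $\RR(a)$, and verify the kink location $a=11/12$ and continuity there, where $(31-32a)/(21-12a)=1/6=2(1-a)$ at $a=11/12$. I would first try the choice of $t$ that makes the two endpoint cases $r=t$, $m=0$ and $m=1/2$, $r=t-1/2$ give equal exponents, and check that interior points are no worse.
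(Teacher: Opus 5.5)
Your setup matches the paper's. Floor bins are trivial. After fixing $t$ and partitioning by presentation label and dyadic lengths, Chebyshev applied separately to \eqref{eq:inverse-moment} and \eqref{eq:plain-fourth} gives the count exponent $\min\{E(r)-\delta r,\,1-2\delta m\}$.

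The gap is the optimization, which is the real content of the proposition, and the recipe you propose for it fails. For fixed $t$ the worst admissible $m$ is $t-r$, so you must bound $\max_r\min\{E(r)-\delta r,\,1-2\delta(t-r)\}$ over $t-\tfrac12\le r\le t$. On $r<1$ the first term is $1-\delta r$, which decreases, and the second term increases. So the maximum of the minimum is at the interior crossing $r=2t/3$, with value $1-2\delta t/3$, not at an endpoint. Your endpoint balance compares $(r,m)=(t,0)$, with value $\tfrac16+(\tfrac56-\delta)t$, against $(t-\tfrac12,\tfrac12)$, with value $1-\delta$. This forces $t=1$, and then the interior point gives $1-2\delta/3$. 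That exceeds $\RR(a)$ for every $\delta>0$, since $(15-10\delta)(15-6\delta)=15(15-16\delta)+60\delta^2$. Your heuristic balance $1-2\delta m=(1+5r)/6-\delta r$ is also on the wrong branch of $E$: the binding case lies on the plateau $E=1$.

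The missing step has two branches.
\begin{itemize}
\item For $r\ge1$ and $\delta\le5/6$, $E(r)-\delta r=\tfrac16+(\tfrac56-\delta)r\le\tfrac16+(\tfrac56-\delta)t$, because $r\le t$. Balancing this against $1-2\delta t/3$ gives $t_*=5/(5-2\delta)$. This lies in $[1,3/2]$ exactly when $\delta\le5/6$, i.e.\ $a\le11/12$, and the common value is $(15-16\delta)/(15-6\delta)=(31-32a)/(21-12a)$.
\item For $\delta\ge5/6$, take $t=3/2$. Then \eqref{eq:witnesses} gives $r\ge1-O(\eta)$, and $E(r)-\delta r$ is nonincreasing in $r$. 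So the inverse witness alone gives $1-\delta+O(\eta)=2(1-a)+O(\eta)$; the plain witness is not needed on this branch.
\end{itemize}
With these two choices of $t$, and absorbing the $O(1/\log Q)$ and $O(\eta)$ endpoint errors by Lipschitz continuity, your argument closes.
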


\begin{proof}
Fix $t$ and partition by the finite presentation label
and dyadic witness lengths. Chebyshev, applied separately
to \eqref{eq:inverse-moment} and \eqref{eq:plain-fourth},
gives the count exponent
\[
                       \min\{E(r)-\delta r,\ 1-2\delta m\},
\]
up to arbitrarily small losses and a fixed height power.
For $0<\delta\le5/6$ and $r<1$, use $m\ge t-r-o(1)$.
The largest possible value of
$\min\{1-\delta r,1-2\delta(t-r)\}$ is
$1-2\delta t/3$, at $r=2t/3$.
For $r\ge1$, the inverse estimate is at most
$1/6+(5/6-\delta)t$.
Choose $t_*=5/(5-2\delta)\in[1,3/2]$.
The two branches agree at
\[
                \frac{15-16\delta}{15-6\delta}
                              =\frac{31-32a}{21-12a}.
\]
For $5/6\le\delta\le1$, choose $t=3/2$.
The witness has $r\ge1-O(\eta)$, so the inverse
estimate is at most $1-\delta+O(\eta)$.
All endpoint errors are absorbed by first decreasing
the preliminary losses. The indicated functions of
the lengths are uniformly Lipschitz.
Finally absorb the dyadic and height costs in the
prescribed order of choices.
\end{proof}

The first branch is exactly $R_\kappa(\delta,0)$
in the detected-row optimization of \cite{Sang}.
The proof above uses its unmarked balance.
The second branch disposes of high bins.

\subsection{Zero density, multiplicity, and cumulative cutoffs}

\begin{proof}[Proof of the density assertions in Theorem \ref{thm:prime-density}]
Choose $e<(\sigma-a_0)/2$.
For large $Q$, $H_1=Q^\tau$ exceeds $(\log Q)^C$.
If $L(s,\psi_u^*)$ has a zero counted in
\eqref{eq:density-def}, then $M_i(u)\ge\sigma$,
because that presentation is included in $\mathcal X_u$.
Thus its bin satisfies $a>\sigma-e>a_0$.
Proposition \ref{prop:bad-rows} and monotonicity of
$\RR$ bound the number of such rows by
$Q^{R(\sigma)+O(e)+\eta}$, after reserving the height loss.

The standard zero count for a primitive finite-order
Hecke function over this fixed field is
\[
 \#\{\rho:0\le\Rea\rho\le1,\ |\Ima\rho|\le T\}
       \ll(T+1)\log(2Q_\psi(T+3)^2);
\]
see \cite{IK}. It counts multiplicities.
By \eqref{eq:original-presentation}, this is
$O_C((\log Q)^{C+1})$ in the present range.
Multiplication gives the asserted density bound.
Deleted Euler factors have no zeros in this half-plane.

For the cumulative version put $R=R(\sigma)>0$
and fix $\theta=R/2$.
On dyads $U\ge Q^\theta$,
$T\le\theta^{-C}(\log U)^C$; the same proof admits
this fixed multiplicative constant.
Sum the resulting bounds $U^{R+\eta}$.
For $1<\N u<Q^\theta$, the ideal count and the
individual zero bound give
$O(Q^\theta(\log Q)^{C+1})$.
Both contributions are within the claimed bound.
\end{proof}

\section{Structured prime moments}
\label{sec:prime-moments}

The zero-location bins carry two pieces of information:
there are few rows in a high bin, and the buffered
rectangle bounds the prime sum on each remaining row.
Balancing these two estimates gives the exponent
$\kappa_6$. Throughout, the prime weights are fixed
smooth profiles.

\subsection{A prime sum inside one bin}

\begin{lemma}\label{lem:prime-bin}
For every row in a bin $(i,a)$, including a floor bin,
and every fixed $V\in C_c^\infty((1,2))$,
\[
 \left|\sum_{\pi\ {\rm primary\ prime},\ (\pi,S)=1}
                  V(\N\pi/Q)\psi_u(\pi)\right|
                            \ll_{V,e,\eta}Q^{a+8e+\eta}.
\]
\end{lemma}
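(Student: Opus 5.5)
We follow the standard explicit-formula route through a finite-height contour, keeping every zero used inside the buffered window $|\Ima s|\le 3(i+1)H_1$ controlled by \eqref{eq:bin}.

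\textbf{Step 1: reduce to a von Mangoldt sum.} First replace the prime sum by
\[
 \sum_n\Lambda(n)\psi_u(n)V(\N n/Q)/\log \N n .
\]
Prime powers contribute $O(Q^{1/2+\eta})$, which is acceptable since $a\ge a_0>1/2$. Primes in $E_u$ or $S$ are zeros of $\psi_u$ anyway. Passing from $\psi_u$ to $\psi_u^*$ changes only the finitely many Euler factors recorded in \eqref{eq:original-presentation}. Their primes number $O(\log Q)$ up to norm $\ll Q$, so the change costs $O(\log Q)$. The weight $1/\log\N n$ is absorbed into the smooth profile, since $\N n\asymp Q$.

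\textbf{Step 2: Mellin inversion and the contour.} Write
\[
 \sum_n\Lambda(n)\psi^*(n)V(\N n/Q)
 =\frac1{2\pi i}\int_{(2)}-\frac{L'}{L}(s,\psi^*)\widetilde V(s)Q^s\,ds .
\]
Here $\widetilde V$ decays faster than any power of $|\Ima s|$. First truncate to $|\Ima s|\le H_1$ with a tail of $O(Q^{-100})$, using $H_1=Q^\tau$ and trivial bounds on the line $\Rea s=2$.

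Next move the contour to $\Rea s=a+4e$, staying inside the rectangle $|\Ima s|\le H_1$. By \eqref{eq:bin}, $M_{i+1}(u)<a+2e$. Hence $L(s,\psi^*)$ has no zeros with $\Rea s\ge a+2e$ and $|\Ima s|\le 3(i+1)H_1$, and this region contains the box of height $3H_1$ around our rectangle. Since $\psi^*$ is nonprincipal (as noted after \eqref{eq:original-presentation}), no pole crosses.

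\textbf{Step 3: the main obstacle.} The key difficulty is bounding $L'/L(s,\psi^*)$ on the segment $\Rea s=a+4e$ and on the horizontal edges $|\Ima s|=H_1$, using only a zero-free region of width $2e$. I would apply the Borel--Carathéodory lemma together with Hadamard's three-circle theorem to $\log L(s,\psi^*)$ on discs centred at $2+i\gamma$, of radius reaching $a+3e$, contained in the zero-free box. This requires an upper bound $\log|L(s,\psi^*)|\ll\log Q$ near the line $\Rea s=a+2e$, which follows from convexity since $\Rea s>1/2$.

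The output is $|L'/L(s)|\ll_e(\log Q)^{O(1)}$, or at worst $Q^{O(e)}$, on $\Rea s\ge a+4e$ at these heights. The losses from the radii are $O(e)$; this is the source of the margin $8e$ rather than $2e$. A crude alternative route, should the disc geometry fail near $\Rea s=1$, is to apply the Landau lemma to $L'/L(s)=\sum_{|\rho-s|<1}(s-\rho)^{-1}+O(\log Q)$, with every relevant $\rho$ at distance $\ge 2e$ from the contour. Each term is then $O(1/e)$, and there are $O(\log Q)$ such zeros.

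\textbf{Step 4: assemble.} The vertical segment contributes
\[
 Q^{a+4e}\cdot Q^{O(e)}\int|\widetilde V|\ll Q^{a+8e+\eta}.
\]
The horizontal edges contribute $O(Q^{2}H_1^{-A})$, which is negligible once the external Mellin decay order is chosen after $\tau$, as in the Order of choices.

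For floor bins, $a=a_0$ and $M_{i+1}(u)<a_0+2e$, so the same argument applies verbatim. In every case we obtain $Q^{a+8e+\eta}$, uniformly in the row.
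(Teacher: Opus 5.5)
Your proposal is correct and is essentially the paper's argument: Mellin inversion of the logarithmic derivative, removal of prime powers, a contour shift of only the finite-height segment inside the zero-free buffered window, the bound $L'/L\ll_e\log Q$ there, and Mellin decay of order chosen after $\tau$ for the horizontal joins and tails. The one difference is that you compare with $\psi_u^*$ directly to remove the deleted Euler factors, whereas the paper works with $L_{\rm orig}$. Your Landau-formula route (every zero with $|s-\rho|<1$ lies at distance $>2e$ from the contour, and there are $O(\log Q)$ of them) already yields the $O_e(\log Q)$ bound that the paper imports from the source's Lemmas 4.9--4.10. You should commit to that bound rather than the hedge ``at worst $Q^{O(e)}$,'' which by itself would not justify the exponent $a+8e$.
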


\begin{proof}
Put $\alpha=a+8e$ and $H=H_1/4$.
The buffered disks in \eqref{eq:bin}, the logarithmic
control of \cite[Lemma 4.9]{OAI}, and the deleted-factor
bound in its Lemma 4.10 give
\begin{equation}\label{eq:logder}
 \left|\frac{L_{\rm orig}'}{L_{\rm orig}}(s,\psi_u)\right|
       \ll_e\log Q
 \quad(\alpha\le\Rea s\le2,\ |\Ima s|\le H).
\end{equation}
The primitive conductor and the deletion radical both
have norm $O_S(Q)$.

Set $V_Q(x)=V(x)/(\log Q+\log x)$.
Each fixed logarithmic derivative is $O_V(1/\log Q)$.
On $\Rea s=2$, Mellin inversion gives
\[
 \sum_n\Lambda_K(n)\psi_u(n)V_Q(\N n/Q)
  =\frac1{2\pi i}\int_{(2)}
       -\frac{L_{\rm orig}'}{L_{\rm orig}}(s,\psi_u)
                          Q^s\widehat V_Q(s)\,ds.
\]
The prime terms are exactly the desired sum.
The terms $n=\pi^k$, $k\ge2$, have coefficient
$V(\N\pi^k/Q)/k$ and contribute
$O_V(Q^{1/2}\log Q)$ by absolute counting.

Move only the segment of height at most $H$ to
$\Rea s=\alpha$. The row is nonprincipal, and no
zero or deleted-factor zero occurs in this rectangle.
By \eqref{eq:logder}, the shifted segment is
$O_{V,e}(Q^\alpha)$.
For each fixed $J$, the horizontal joins are
$O(Q^2H^{-J})$ and the two unshifted absolute-line
tails are $O(Q^2H^{1-J})$.
Choose $J>1+12/\tau$ after $\tau$, making these
errors $O(Q^{-10})$.
The external Mellin profile is differentiated
to this order. Since $a\ge51/100>1/2$, the prime-power
error is absorbed as well.
\end{proof}

\subsection{The second moment and its optimal bin}

\begin{proof}[Proof of the prime-moment assertion in
Theorem \ref{thm:prime-density}]
First sum over all sixth-power-free rows in $\mathscr U_S(Q)$.
On an above-floor bin, Proposition \ref{prop:bad-rows}
and Lemma \ref{lem:prime-bin} give exponent
$\RR(a)+2a+16e+O(\eta)$.
For $a\le11/12$ put
$F(a)=(31-32a)/(21-12a)+2a$.
Then
\[
 F'(a)=2-\frac{300}{(21-12a)^2},\qquad
 F''(a)=-\frac{7200}{(21-12a)^3}<0.
\]
The maximum occurs at
\[
 a_*=\frac{21-5\sqrt6}{12},\qquad
 F(a_*)=\frac{37-10\sqrt6}{6}=\kappa_6 .
\]
For $a\ge11/12$, $\RR(a)+2a=2$.
The floor bins use the trivial row count and give
$1+2a_0=2.02$ before the small losses.
Both are below $\kappa_6$.
Choose all losses in the prescribed order and sum
the finite set of bins.
Finally restrict the nonnegative outer squared sum
to squarefree rows.
\end{proof}

\subsection{Bounded rough prime convolutions}

\begin{proposition}\label{prop:prime-convolution}
Fix $c>0$ and $k_0\ge1$.
For $1\le k\le k_0$ and $\prod_{j=1}^k F_j\asymp Y$, let
\[
 \beta_0(b)=
 \sum_{b=\pi_1\cdots\pi_k}
                   \prod_{j=1}^k w_j(\N\pi_j/F_j),
                 \qquad \beta(b)=\mu^2(b)\beta_0(b).
\]
The primes are primary and outside $S$, the smooth
annular weights are independently specified, and every
supported prime has norm at least $Y^c$.
Assume every fixed derivative is bounded by a fixed
power of $\log Y$, uniformly over the prescribed family.
Then
\begin{equation}\label{eq:prime-convolution-moment}
 \sum_{\substack{u\ {\rm sixth\!-\!power\!-\!free}\\
                         \N u\asymp Y,\ (u,S)=1}}
      \left|\sum_b\beta(b)(u/b)_6\right|^2
       \ll_\eps Y^{\kappa_6+\eps}+Y^{13/6-c+\eps}.
\end{equation}
Constants may depend on the fixed data, $c,k_0$, and
the prescribed support and derivative bounds.
\end{proposition}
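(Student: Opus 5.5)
Fix the tuple $(F_1,\dots,F_k)$ and argue in two stages. First remove the squarefree condition. Second, compute the Mellin transform of the unrestricted convolution $\beta_0$ and treat it as a product of the single-prime sums already handled in Lemma \ref{lem:prime-bin}.

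For the first stage, write $\beta=\beta_0-\beta_0(1-\mu^2)$. Because $k\le k_0$, the difference $\beta_0(1-\mu^2)$ is supported on products in which two primes coincide, $\pi_i=\pi_j$. Every supported prime has norm at least $Y^c$. Hence the repeated part has coefficient vector of squared $\ell^2$ norm $\ll Y^{1-c+\eps}$. This uses the derivative bounds, which give $|w_j|\ll(\log Y)^{O(1)}$, and the divisor bound. Apply the arbitrary-coefficient bound \eqref{eq:theta6} with $A=B\asymp Y$, together with Remark \ref{rem:family-compatibility} to match the families after using reciprocity \eqref{eq:reciprocity} to transpose rows and columns. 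The squared operator norm is then $\ll Y^{7/6+\eps}$, so this part contributes $Y^{13/6-c+\eps}$. This is the second term in \eqref{eq:prime-convolution-moment}. By $|x+y|^2\le2|x|^2+2|y|^2$, it remains to bound the moment with $\beta_0$ in place of $\beta$.

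For $\beta_0$, the sum $\sum_b\beta_0(b)\psi_u(b)$ factors exactly as $\prod_{j=1}^k\Sigma_j(u)$, where $\Sigma_j(u)=\sum_{\pi}w_j(\N\pi/F_j)\psi_u(\pi)$. This holds because $\psi_u$ is completely multiplicative on the original presentation, zero extensions included. Since $\N u\asymp Y$, decompose dyadically in $u$ at scale $Q\asymp Y$ and use the bins of Lemma \ref{lem:detector-input} at that $Q$.

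Next extend Lemma \ref{lem:prime-bin} to a prime sum of arbitrary length $F_j=Y^{\theta_j}$, with $\theta_j\ge c$. The proof is the same contour shift. Along the permitted rectangle of height $H\asymp Q^\tau/4$, the bound \eqref{eq:logder} gives $|\Sigma_j(u)|\ll F_j^{a+8e}Y^\eta$ for a row in bin $(i,a)$, since the shifted line lies at $\Rea s=a+8e$. The Mellin-tail errors need $F_j\ge Y^c$ and the $(\log Y)^{O(1)}$ derivative bounds: choose $J\gg 1/(c\tau)$, so the horizontal pieces $O(F_j^2H^{-J})$ are negligible. The prime-power terms are $O(F_j^{1/2+\eta})$, which is absorbed since $a>1/2$. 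Multiplying over $j$ gives $|\sum_b\beta_0(b)\psi_u(b)|\ll Y^{a+8ek_0+\eta}$ on every row in the bin.

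Finally, sum over bins exactly as in the proof of the prime-moment part of Theorem \ref{thm:prime-density}. Proposition \ref{prop:bad-rows} bounds the number of rows in an above-floor bin by $Y^{\RR(a)+\eps}$, so the bin contributes $Y^{\RR(a)+2a+O(e)+\eta}\le Y^{\kappa_6+\eps}$. Floor bins contribute $Y^{2.02+\eps}$. The sixth-power-free rows with $\N u\asymp Y$ are covered by $O(1)$ families $\mathscr U_S(Q)$, and there are $O((\log Y)^{k_0})$ dyadic tuples, both absorbed into $Y^\eps$.

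The main obstacle is the uniformity of the contour argument. The constant in Lemma \ref{lem:prime-bin} depends on $V$ only through finitely many seminorms. Here the profiles vary within a family with polylogarithmic derivative growth, and the lengths $F_j$ may be as small as $Y^c$ rather than $\asymp Q$. So the Mellin decay must be checked to hold with $(\log Y)^{O(J)}$ losses, with $J$ chosen after $\tau$ and $c$ in the paper's prescribed order of choices. The repeated-prime bound via \eqref{eq:theta6} also needs the family compatibility across the transpose, but that part is routine.
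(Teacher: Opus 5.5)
Your treatment of the unrestricted convolution $\beta_0$ follows the paper. You use the exact factorization into $k$ prime sums, the contour shift of Lemma \ref{lem:prime-bin} at each length $F_j$ inside the buffered rectangle, and the bin summation through Proposition \ref{prop:bad-rows}. Your bound for the squarefree-failure discrepancy $\Delta=(1-\mu^2)\beta_0$ on \emph{sixth-power-free} columns also matches the paper: support size $Y^{1-c}$ times $Y^{7/6+\eps}$.

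The gap is in the remaining columns of $\Delta$. The bound \eqref{eq:theta6} controls $\Theta_6$, whose rows and columns must both be sixth-power-free. When $k\ge6$, $\beta_0$ is also supported on columns $b$ divisible by some $\pi^6$. The hypotheses permit this whenever $c<1/6$, for instance with several equal $F_j$. All such columns lie in $\Delta$, since $\mu^2(b)=0$ there. Your application of \eqref{eq:theta6} says nothing about them. Transposing by reciprocity does not help, because $\Theta_6$ requires sixth-power-freeness on both sides. In fact no transposition is needed at all: $(u/b)_6=J_6(u,b)$ already has row $u$ and column $b$. A trivial bound cannot fill the hole for small $c$. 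There are $\ll Y^{1-5c}$ such columns, which gives only $Y\cdot Y^{2-10c}$, and this exceeds $Y^{13/6-c}$ when $c<5/54$.

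The missing step is the reduction the paper makes.
\begin{itemize}
\item Write $b=b_0t^6$ uniquely with $b_0$ sixth-power-free, imposing no coprimality, and use the identity $(u/b)_6=(u/b_0)_6\,1_{(u,t)=1}$.
\item For fixed $t$, drop the row mask in the nonnegative moment. Then apply \eqref{eq:theta6} to the $b_0$-vector, whose squared norm is $\ll Y^{1+\eta}(\N t)^{-6}$. This gives a row norm $\ll Y^{13/12+\eta}(\N t)^{-3}$.
\item If $t\ne1$, some supported prime divides $t$, so $\N t\ge Y^c$. Minkowski over $t$ then bounds the squared contribution by $Y^{13/6-4c+\eps}$, which fits inside the second term of \eqref{eq:prime-convolution-moment}.
\end{itemize}
With this added, the rest of your argument, including the final $|x+y|^2\le2|x|^2+2|y|^2$ combination, goes through as written.
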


\begin{proof}
Without the squarefree restriction the polynomial is
exactly the product of the $k$ prime sums.
On a buffered bin, the proof of Lemma \ref{lem:prime-bin}
at each length $F_j$ gives $F_j^{a+O(e)}Y^\eta$.
Nonempty factors have $Y^c\ll F_j\ll Y$.
Their number is bounded, their external seminorms
cost only logarithms, and all Mellin tails are handled
in the same finite height rectangle.
The product is therefore bounded by $Y^{a+O(e)+O(\eta)}$.
The same bin count and optimization give a squared
row norm $O(Y^{\kappa_6+\eps})$.

Let $\Delta=(1-\mu^2)\beta_0$.
Its coefficients are divisor-bounded up to logarithms,
and every nonzero coefficient contains a repeated
prime of norm at least $Y^c$.
The support has size
\[
               O\left(\sum_{\N\pi\ge Y^c}
                                  Y/(\N\pi)^2\right)=O(Y^{1-c}).
\]
For the sixth-power-free columns in this discrepancy,
\eqref{eq:theta6} gives squared row norm
$O(Y^{13/6-c+\eps})$.

Columns that are not sixth-power-free require a separate
reduction. Write $b=b_0t^6$ uniquely, with $b_0$
sixth-power-free; no condition $(b_0,t)=1$ is imposed.
The exact identity is
$(u/b)_6=(u/b_0)_6\,1_{(u,t)=1}$.
For fixed $t$, apply \eqref{eq:theta6} after dropping
only this outer row restriction.
The row norm, before squaring, is at most
\[
 Y^{7/12+\eta}(Y/(\N t)^6)^{1/2}
                         =Y^{13/12+\eta}(\N t)^{-3}.
\]
If $t\ne1$ contributes, a supported prime divides it,
so $\N t\ge Y^c$.
Minkowski and ideal counting bound the squared norm
of their sum by
\[
 \left(Y^{13/12+\eta}
                   \sum_{\N t\ge Y^c}(\N t)^{-3}\right)^2
                                \ll Y^{13/6-4c+\eps}.
\]
Nonempty unit-scale ranges and fixed annular support
constants change only the implied constants.
Combining these estimates proves
\eqref{eq:prime-convolution-moment}.
\end{proof}

This coefficient class is the one used in
\cite[Section 4]{Patterson}.
In particular, the estimate saves any fixed power
smaller than $\min\{13/6-\kappa_6,c\}$ over the general
sextic comparison. For coprime squarefree $r,s$ of norm
$\asymp Y^{1/3}$, the map $(r,s)\mapsto rs^2$ is
injective with sixth-power-free image of norm $\asymp Y$,
so the proposition also applies to that mixed-order
row configuration with these same structured coefficients.

\appendix
\section{The original unbalanced argument without the plain moment}
\label{app:unbalanced}

This appendix retains the first version's result and its
distinct analytic route. It uses the finite Gauss and
reciprocity identities, the elementary complete-mean
calculation, and the known upper bounds in Section
\ref{subsec:known-sieves}.

\begin{theorem}[The first-version bounds]
\label{thm:unbalanced}
Fix a finite set $S_0$ containing the primes above $6$.
For $0<\delta<3/11$, $\eps>0$, and $A,B\ge2$ with
$B^{16/11+\delta}\le A\le B^{2-\delta}$,
\[
 \Xi_{6,S_0}(A,B)\gg_{\delta,\eps,S_0}
                   (AB)^{-\eps}A^{5/6}B^{1/3}.
\]
Consequently, for every $\eps>0$ and $Q\ge2$,
\[
                     \Xi_{6,S_0}(Q,Q)
                          \gg_{\eps,S_0}Q^{17/16-\eps}.
\]
\end{theorem}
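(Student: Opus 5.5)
The plan is to prove Theorem \ref{thm:unbalanced} without Theorem \ref{thm:plain-input}. We replace Proposition \ref{prop:gram} by a classical two-Poisson Gram estimate, which the text calls Lemma \ref{lem:classical-gram}, and we replace the centered scale comparison by a direct operator-norm transfer from squarefree columns to squarefree rows. First enlarge $S_0$ to $S$ by \eqref{eq:submatrix}.

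\emph{Step 1: the classical Gram formula.} Start from \eqref{eq:first-poisson} and apply Poisson summation a second time, in the column variables $n_1,n_2$. The finite correlation \eqref{eq:finite-correlation} and the local factors \eqref{eq:local-correlation} are used unchanged. The zero dual frequencies reproduce the complete means of Subsection \ref{subsec:complete-means}, so the main term $C_{S,w}\II(U)^2q^{5/6}L^{-2/3}\Omega(\mathfrak d,\mathbf e)$ is the same as before. The nonzero dual frequencies should be bounded trivially through the dual Gauss sums, which gives an error $O(Z^\eps\{q/L+DX\})$. This is valid once $q/L$ exceeds a fixed small power of $Z$. Feeding it into the powerful-component decomposition of Lemma \ref{lem:projection-upper}, with the same component table \eqref{eq:component-table}, should give the analogue of Corollary \ref{cor:column-anchor}. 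That is, $\|F^{\rm sf}_{1,0;W}(\cdot;B)\|_A^2\gg A^{5/6}B^{-2/3}$ whenever the error $DX=B^2/A$ is below the main term, which holds for $A\ge B^{16/11+\delta}$ after the losses are arranged. I would first check this bookkeeping directly, since the specific exponent $16/11$ has to emerge from it.

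\emph{Step 2: transfer to squarefree rows.} Identify the conjugate polynomial with $P_\lambda(m)$ as in Lemma \ref{lem:structured-anchor}. The squarefree-column vector $\lambda$ has $\|\lambda\|^2\ll B^{-1}$ and all-row energy $\gg A^{5/6}B^{-2/3}$. Write each row as $m=t^6a_1a_2^2a_3^3a_4^4a_5^5$. Lemma \ref{lem:tails}, which uses only \eqref{eq:theta6} and \eqref{eq:xi3}, shows that the tail with $\N t>Z^\eta$ or with the higher factors of norm greater than $Z^{3\eta}$ is $o(M\|\lambda\|^2)$. This needs $A\ge B^{4/3+\delta}$, which is weaker than our range. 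Pigeonhole over the $O(Z^{5\eta})$ heads then gives a single $h$ for which the twisted vector $\lambda_b\chi_b(h)$, restricted to squarefree rows $a_1$, keeps energy $\gg Z^{-6\eta}A^{5/6}B^{-2/3}$. The Rayleigh quotient then gives $\Xi_{6,S}(A,B)\gg (AB)^{-\eps}A^{5/6}B^{1/3}$.

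\emph{Step 3: the balanced consequence.} Take $B=Q^{\beta}$ and $A=Q$ with $\beta=11/16-\delta'$, so that $A=B^{16/11+\delta}$. Monotonicity of $\Xi$ in the column cutoff then gives $\Xi(Q,Q)\ge\Xi(Q,Q^{\beta})\gg Q^{5/6+\beta/3-\eps}$. At $\beta=11/16$ this exponent is $5/6+11/48=51/48$, not the stated $17/16=51/48$; these agree, so the choice is consistent.

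The main obstacle is Step 1. The nonzero dual frequencies of the second Poisson summation must be shown to contribute only $DX$ uniformly over the moving exclusions $\mathfrak d,\mathbf e$ and over the powerful components, because it is exactly this error that fixes the threshold $16/11$. I would handle it by bounding the dual Gauss sums through the local factor table, trivially at each prime, and then summing with the divisor bounds used in \eqref{eq:three-sums}.
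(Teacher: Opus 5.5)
Your proposal is correct. Steps 1 and 3 follow the paper: the two-Poisson estimate is Lemma~\ref{lem:classical-gram}, its use in the powerful-component decomposition is Lemma~\ref{lem:original-witness}, and the full-cutoff deduction with column length $Q^{11/16-\tau}$ is the same. Your arithmetic $5/6+11/48=51/48=17/16$ is right, although the sentence stating it is garbled.

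Step 2 takes a different route. The paper's Proposition~\ref{prop:original-transfer} is an operator-norm transfer. The $t$-head gives $\Psi_6(A,B)\gg MZ^{-\eta}$. Then \eqref{eq:higher-tail}, applied to an \emph{arbitrary} maximizing vector of $\Psi_6$, gives $\Psi_6\ll Z^{\rho}\{T\,\Xi_{6,S}(A,B)+M(T^{-2/3}+Z^{-g})\}$, and the error is absorbed.

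You instead apply both tails to the fixed Gauss vector and pigeonhole down to one twist $h$. This is the mechanism of Lemma~\ref{lem:structured-anchor}, used at $(A,B)$ instead of $(A,L_0)$. It works in the range of Lemma~\ref{lem:tails}: summing the higher-factor tail over the $O(Z^\eta)$ heads $t$ gives $\ll Z^{\eta+\rho}M(Z^{-2\eta}+Z^{-g})\|\lambda\|^2$. That is $o(M\|\lambda\|^2)$ once $\rho<\eta$ and $\eta+\rho<g$.

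What each route buys: yours yields an explicit near-extremal vector $\lambda_b\chi_b(h)$ that keeps the Gauss structure, and it avoids the absorption step. The paper's version is a structure-free transfer principle, valid for any squarefree-column vector with a large all-row Rayleigh quotient. That is why the appendix records it separately from the main-text anchor, where the Gauss formula must be retained for the scale comparison.

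Two small corrections to Step 1.
\begin{itemize}
\item The error should also contain $(q/L)X^{1/12}$, which comes from partial summation in the complete means. This is harmless against the main scale $(q/L)X^{1/6}$.
\item The $DX$ term does not come from the sums \eqref{eq:three-sums}. It arises tuple by tuple, by comparing the column period $Dq_Cq_k$ with the residual scale $N=L/(q_Cq_{\mathfrak b})$. One then sums $V^{-1}\min\{1,(\Lambda V)^\alpha\}=O(\Lambda)$ over dyadic $V=q_Cq_{\mathfrak b}$, with $\Lambda=D\mathcal R X/L$. The bounds \eqref{eq:three-sums} enter only afterwards, when you sum over powerful components.
\end{itemize}
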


The second conclusion is a full-cutoff statement, obtained
by taking columns shorter than $Q$.

\subsection{The two-Poisson estimate}

Retain the notation $F_{\mathfrak d,\mathbf e;U}$,
$D=\N\mathfrak d$, $X=L^2/q$, and $\Omega$ from
Section \ref{sec:gram}.

\begin{lemma}[The first-version Gram estimate]
\label{lem:classical-gram}
For a fixed nonnegative smooth profile $U$, $q\ge L\ge1$,
$X\ge1$, and $q/L$ sufficiently large relative to the
fixed data,
\begin{equation}\label{eq:classical-gram}
 \begin{split}
 \|F_{\mathfrak d,\mathbf e;U}(\,\cdot\,;L)\|_q^2
 ={}&C_{S,w}\II(U)^2q^{5/6}L^{-2/3}
                         \Omega(\mathfrak d,\mathbf e)\\
 &+O_\eps\left((2qLD)^\eps
          \{(q/L)(1+X^{1/12})+DX\}\right).
 \end{split}
\end{equation}
The constants are uniform in the moving exclusions,
exponents, and ray class. For $X\le1$, the upper bound
is $O_U(q/L)$.
\end{lemma}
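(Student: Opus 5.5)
We follow the proof of Proposition \ref{prop:gram} up to the first Poisson formula \eqref{eq:first-poisson}, reusing its diagonal term $O(q/L)$, the local correlation formulas \eqref{eq:finite-correlation}--\eqref{eq:local-correlation}, the coprimality projector, and the tensor-branch decomposition. Only Lemma \ref{lem:nonexceptional} is replaced: it is the one step where Theorem \ref{thm:plain-input} entered. The exceptional column-mean error \eqref{eq:exception-error}, the completion \eqref{eq:mean-completion}, the weighted ideal count \eqref{eq:weighted-ideal-count}, and the evaluation of the constant $C_{S,w}$ do not use the plain moment. We therefore quote them verbatim. They produce the main term and the errors $(q/L)(1+X^{1/12})$. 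The short-column bound for $X\le1$ and the fixed $S$-row M\"obius projection also transfer unchanged.

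For a nonexceptional frequency $k$ with frozen $C,\mathfrak b,g$ and branch, each column variable $n_i$ carries the character \eqref{eq:branch-character}. This is a nonprincipal character of conductor at most $q_C q_k D$ up to fixed factors, and it is summed against a smooth weight at scale $N=L/(q_Cq_{\mathfrak b})$. We apply a second Poisson summation in each column variable, modulo that conductor. The dual sum is a normalized Gauss sum of size $\sqrt{\text{conductor}}$ times an essentially bounded number of dual frequencies. This holds because $q\ge L$ makes the dual length $(\text{conductor})/N$ controlled, in the range where $q/L$ is large relative to the fixed data. Thus each column sum is $\ll Z^\eps(q_Cq_kD)^{1/2}$ or better, with no averaging over rows. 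Combining this with the balance inequality \eqref{eq:balance} gives a bound per frequency. Summing over $q_Cq_k\ll \mathcal R X$ with shell decay then bounds the total nonexceptional part, after the normalization $q/L^3$, by $Z^\eps DX$. We get $DX$ rather than the $D$ of Lemma \ref{lem:nonexceptional} because we lose the row average. One may instead dualize only the moving part and use the cubic/sextic sieve \eqref{eq:xi3}, \eqref{eq:theta6} over $k$. If the pointwise bound falls short, this is the fallback. I expect the cleaner route is the pointwise Gauss-sum evaluation after Poisson, which is just the classical Pólya--Vinogradov-type step.

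The main obstacle is making the second Poisson step uniform. It must handle the moving punctures $P_i$ and the nonunit substitutions $m_i=f_iz_i$, since coprimality masks must be removed by M\"obius inversion before Poisson, at a divisor cost $d(P_i)\ll Z^\eps$. It must also keep every dual frequency nonzero for nonexceptional $k$. The hypothesis that $q/L$ is large is used exactly here: it ensures the zero dual frequency in the column Poisson occurs only in the exceptional case. In the nonexceptional case the character is nonprincipal, so the zero-frequency mean vanishes, and the remaining dual terms are absorbed by the height decay of the smooth weight. With this, adding the unchanged contributions yields \eqref{eq:classical-gram}, uniformly in $\mathfrak d,\mathbf e$, and the ray class.
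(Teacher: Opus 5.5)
Your argument is correct in substance, but it takes a different route from the paper.

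\textbf{The paper's route.} The paper's proof of this lemma does not use the tensor branches or nonprincipality. It also does not split the error into exceptional and nonexceptional frequencies. For every tuple $(C,\mathfrak b,k)$ it subtracts the complete mean of the full two-column coefficient. It then applies Poisson in both columns, using only the common period $O_S(Dq_Cq_k)$ and the size bound $q_C$. This gives $\mathcal R^{-\beta}C_0N^2\min\{1,(D\mathcal RX_Q/N)^\alpha\}$ per tuple. Summing $V^{-1}\min\{1,(\Lambda V)^\alpha\}$ over dyadic $V=C_0B_0$ yields $DX_Q$. The exceptional condition enters only in identifying the nonzero complete means of Section \ref{subsec:complete-means}.

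\textbf{Your route.} You keep the main-text architecture, including \eqref{eq:exception-error} and the main-text row projector. You replace the plain moment by pointwise P\'olya--Vinogradov on each nonprincipal column character. This works, provided the coupled profile $\widehat w_{\rm rad}(q_Cq_k/(Xt_1t_2))$ is first separated by the logarithmic Fourier expansion from the proof of Lemma \ref{lem:nonexceptional}; you should state that step. After separation, each column sum is $\ll Z^\eps\sqrt f$ with $f\ll DR_{C,g}q_k/\N g$. Then \eqref{eq:balance} gives $\ll Z^\eps Dq_Cq_k\asymp Z^\eps D\mathcal RX$ per tuple. With $O(B_0\mathcal RX)$ tuples, $B_0\le L$, and the factor $q/L^3$, the total is $Z^\eps D\mathcal R^2X$.

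\textbf{What each buys.} The paper's argument is more elementary: it uses periodicity only, with no cancellation inside a column once the period exceeds the column length. Yours uses genuine cancellation and is stronger. If you keep the trivial bound $C_0N^2$ alongside and optimize $\min\{L^2/V,\,VD\mathcal RX\}$ over $V$, you even get $\sqrt{DX}$. Nothing in your route needs $q\ge L$.

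\textbf{Two wrong side justifications.} Nothing rests on them, but they should be corrected.
\begin{itemize}
\item The dual length $f/N$ is not essentially bounded, and $q\ge L$ does not control it: it is of size $D\mathcal RX$ when $N\asymp1$. The bound $\sqrt f$ comes from the prefactor $N/\sqrt f$ times a dual sum of length $\ll1+f/N$. So when $f>N$ the nonzero dual terms are not absorbed by decay; they are what produce $\sqrt f$.
\item The vanishing of the zero dual frequency for nonexceptional $k$ is exactly nonprincipality; it has nothing to do with $q/L$. The paper uses the hypothesis that $q/L$ is large only so that every projected row scale $q/\N h$, $h\mid\prod_{p\in S}p$, still satisfies $Q\ge L$. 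Since you reuse the main-text projector, which handles empty short rows directly, that hypothesis plays no role in your argument.
\end{itemize}
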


\begin{proof}
Use the row-Poisson identity \eqref{eq:first-poisson}
and the local correlation \eqref{eq:local-correlation},
including its one-nonunit branches. Insert the same
coprime projector $n_i=\mathfrak b m_i$.
For this proof write $Q$ for the unrestricted row scale
and $X_Q=L^2/Q$. We assume $Q\ge L$.
The first frequency $k$ is nonzero; zero and nonzero
modes below refer to the subsequent column Poisson
summation.

The full two-column coefficient is bounded by $q_C$
and has a common period of norm
$O_S(Dq_Cq_k)$ in each column.
On the ball $q_Cq_k\ll X_Q$ and outer dyadic shells
$q_Cq_k\asymp\mathcal R X_Q$, write $\mathcal R=1$
for the ball. If $q_C\asymp C_0$ and
$q_{\mathfrak b}\asymp B_0$, the residual column
scale is $N=L/(C_0B_0)$.
Subtract the complete mean and apply Poisson in the
two columns. For fixed $\alpha>1$ and sufficiently
large $\beta$, the error for one tuple is
\begin{equation}\label{eq:classical-tuple-error}
 \ll \mathcal R^{-\beta}C_0N^2
       \min\left\{1,
             \left(\frac{D\mathcal R X_Q}{N}\right)^\alpha
           \right\}.
\end{equation}
For periods shorter than the column scale, this follows
from rapid decay at the nonzero dual lattice points.
For short residual ranges, bound the lattice sum and
its complete mean separately. This also handles an
empty subunit annulus. Rescaled derivatives of the
coupled smooth weight have arbitrary decay in
$\mathcal R$.

There are $O(C_0)$ common factors, $O(B_0)$ projector
ideals, and $O(\mathcal R X_Q/C_0)$ frequencies on a
nonempty shell. Including the factor $Q/L^3$ gives
\[
 \mathcal R^{-\beta}\frac QL
       \frac{\mathcal R X_Q}{C_0B_0}
       \min\left\{1,
          \left(\frac{D\mathcal R X_Q C_0B_0}{L}\right)^\alpha
           \right\}.
\]
Put $V=C_0B_0$ and $\Lambda=D\mathcal R X_Q/L$.
The dyadic sum of
$V^{-1}\min\{1,(\Lambda V)^\alpha\}$ is
$O(\Lambda)$: below $\Lambda V=1$ it grows towards
the endpoint, and above it the reciprocal scale
decays. The number of pairs of divisor dyads with
a given product costs a logarithm. Summing the
shells therefore gives
\begin{equation}\label{eq:classical-poisson-error}
 O_\eps\left((2QLD)^\eps\frac QL\frac{DX_Q^2}{L}\right)
                         =O_\eps((2QLD)^\eps DX_Q).
\end{equation}

The complete means are exactly those evaluated in
Section \ref{subsec:complete-means}. Their evaluation
does not use the plain moment: the exceptional
condition \eqref{eq:sixth-root}, the completion bound
\eqref{eq:mean-completion}, and the weighted ideal
count \eqref{eq:weighted-ideal-count} give the same
main term, with errors
$O_\eps((2QLD)^\eps(Q/L)(1+X_Q^{1/12}))$.
The diagonal also has size $O(Q/L)$.
This proves the unrestricted-row formula.

Restore $(m,S)=1$ by the finite row projector.
The refined ray classes make each $S$-supported
numerator twist constant on a column fiber.
Its main coefficient is thus
$C_{S,w}^{\,0}\prod_{p\in S}(1-q_p^{-5/6})=C_{S,w}$,
as in Section \ref{sec:gram}.
Taking $q/L$ sufficiently large places every
$Q=q/\N h$, $h\mid\prod_{p\in S}p$, in the range
just considered. The finite sum of the errors proves
\eqref{eq:classical-gram}.

If $X\le1$, the reduced additive fractions have
separation $\gg L^{-1}$ and total squared coefficient
mass $O_U(L^{-1})$. The packing bound gives a Gram
row sum $O(q+L^2)$, so Schur's inequality gives
$O_U((q+L^2)/L)=O_U(q/L)$. This argument is also
independent of the plain moment.
\end{proof}

The extra term in \eqref{eq:classical-gram} is
$DX=(q/L)DX^2/L$. At $(q,L,D)=(A,B,1)$ its ratio
to the main scale is
\[
 \frac{Y}{(A/B)Y^{1/6}}
       =\frac{Y^{11/6}}B=B^{8/3}A^{-11/6},
                         \qquad Y=B^2/A.
\]
This is the source of the threshold $A>B^{16/11}$.
Replacing $DX$ by $D$ requires the additional plain
moment used in the main text.

\subsection{The squarefree-column witness}

\begin{lemma}\label{lem:original-witness}
Fix $0<\delta<3/11$ and the smooth profiles $W,w$.
After a fixed enlargement $S\supseteq S_0$, for all
sufficiently large $B$ and
$B^{16/11+\delta}\le A\le B^{2-\delta}$, there is a
nonzero vector $\lambda$ supported on squarefree
$b\in I(S)$ with $\N b\le B$ such that
\[
 \|\lambda\|_2^2\ll B^{-1},\qquad
 \sum_{m\in I(S)}w(m/\sqrt A)|P_\lambda(m)|^2
                              \gg A^{5/6}B^{-2/3}.
\]
The row sum is over unrestricted ideals, evaluated at
their primary generators.
\end{lemma}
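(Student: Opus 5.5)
The plan is to repeat the proof of Corollary \ref{cor:column-anchor}, with Lemma \ref{lem:classical-gram} in place of Proposition \ref{prop:gram}, and then move from the Gauss polynomial to a coefficient vector $\lambda$ by the identity used in the proof of Lemma \ref{lem:structured-anchor}.

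\textbf{The vector.} Put
\[
 \lambda_b=B^{-1}W(q_b/B)(q_b/B)^{-1/2}\overline{G(b,1)}\chi_b(-1)
 1_{b\ {\rm sf},\ (b,S)=1,\ [b]=\sigma_{\rm ray}},
\]
with $W$ fixed, nonnegative, nonzero and supported in $(1/2,1)$. Since $|G(b,1)|\le1$ and the number of ideals of norm $\asymp B$ is $O(B)$, we get $\|\lambda\|_2^2\ll B^{-1}$. Primitive Gauss evaluation gives $G(s,-m)=\overline{\chi_s(-m)}\,G(s,1)$ for squarefree $s$ coprime to $m$, and both sides vanish otherwise. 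With reciprocity \eqref{eq:reciprocity}, which is constant on the fixed ray fibers, this identifies $|P_\lambda(m)|$ with $|F^{\rm sf}_{1,0;W}(m;B)|$ on rows in a fixed unit class. So it suffices to bound the weighted norm $\|F^{\rm sf}_{1,0;W}(\cdot;B)\|_A^2$ from below. The rows with $(m,S)=1$ form a subset of $I(S)$, and the unit-class splitting loses at most a factor six, as in Lemma \ref{lem:structured-anchor}.

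\textbf{The lower bound for the squarefree norm.}
1. Decompose the squarefree-column polynomial into powerful components $t$, as in Section \ref{sec:columns}. The unit component $t=1$ is the all-modulus polynomial, and the others carry the scalars in \eqref{eq:component-table} and the scales in \eqref{eq:component-scales}.
2. Apply Lemma \ref{lem:classical-gram} to each component with $X_t\ge1$, with moving data $D_t$. For $q_t/L_t$ not large enough, or for $X_t<1$, use the short-column bound $O(q_t/L_t)$ instead. The condition $q/L$ large holds for the unit component because $A/B\ge B^{5/11}$, and $q_t/L_t=qE/L\ge q/L$ for every component.
3. The main terms are summed exactly as in Lemma \ref{lem:projection-upper}. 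The nonunit components then contribute at most $(C_0-1)\sqrt{C_{S,w}}\,\II(W)\,\mathcal M^{1/2}$, where $\mathcal M=A^{5/6}B^{-2/3}$, and a fixed enlargement of $S$ makes $C_0-1<1/4$.
4. The new error in each component is $D_tX_t$, multiplied by the squared scalar $(TEE_{\rm bd})^{-1}$. This is bounded by $X\,D_t/(T^2E^2E_{\rm bd})$, and the sum over components converges after square roots, as in the third sum of \eqref{eq:three-sums}.
5. The total error in the norm is therefore
\[
 (AB)^\eps\bigl\{\sqrt{A/B}\,(1+Y^{1/24})+\sqrt{Y}\bigr\},\qquad Y=B^2/A.
\]
Divided by $\sqrt{\mathcal M}$, the first two terms are $o(1)$ because $Y\ge B^\delta$, exactly as in Corollary \ref{cor:column-anchor}. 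The last term has squared ratio $Y/\mathcal M=B^{8/3}A^{-11/6}\le B^{-11\delta/6}$, using $A\ge B^{16/11+\delta}$. With $\eps$ small in terms of $\delta$, the triangle inequality gives $\|F^{\rm sf}\|_A\gg\sqrt{\mathcal M}$.
6. The hypothesis $\delta<3/11$ only guarantees that the range $B^{16/11+\delta}\le A\le B^{2-\delta}$ is nonempty. Lemma \ref{lem:classical-gram} needs $q\ge L$, which holds here.

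\textbf{Main obstacle.} The delicate step is applying Lemma \ref{lem:classical-gram} uniformly across all components. Its hypothesis that $q/L$ is sufficiently large relative to the fixed data has to be checked for each component. Its $DX$ error also has to be controlled for every component with $X_t\ge1$, with the moving support $D_t$ growing with $t$, so that the sum over components stays bounded. The step-4 bound $X D_t/(T^2E^2E_{\rm bd})$ is what makes this work.
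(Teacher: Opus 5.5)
Your proposal is correct and is essentially the paper's proof. It uses the powerful-modulus decomposition with Lemma \ref{lem:classical-gram} applied componentwise (valid since $q_t/L_t=(A/B)E\to\infty$), the $C_0$ bound on the nonunit main terms, and the component error $D_tY/(T^2E^2E_{\rm bd})$ summed through the convergent sum $\sum D_t^{1/2}T^{-1}E^{-1}E_{\rm bd}^{-1/2}$. It also uses the ratio $Y/\mathcal M=B^{8/3}A^{-11/6}\le B^{-11\delta/6}$, and then primitive Gauss evaluation to pass to $\lambda$.

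The differences are only cosmetic. The paper selects the dominant unit class $\epsilon$ and puts $\chi_b(-\epsilon)$ into $\lambda$. Your fixed choice $\epsilon=1$ also works, because unit-numerator characters are constant on the column fiber and $w$ is radial, so all six classes carry equal energy. Reciprocity is not needed for the identification, since $J_6(a,b)=\chi_b(a)$ directly.
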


\begin{proof}
Put $Z=AB$, $Y=B^2/A$, and
$\mathcal M=A^{5/6}B^{-2/3}=(A/B)Y^{1/6}$.
Let $\mathcal A=F_{1,0;W}(\,\cdot\,;B)$ and let
$\mathcal A^{\rm sf}$ restrict its columns to
squarefree moduli. The $D=1$ case of
\eqref{eq:classical-gram} gives
\[
 \|\mathcal A\|_A^2
   =C_{S,w}\II(W)^2\mathcal M
      +O_\eps\left(Z^\eps
          \{(A/B)(1+Y^{1/12})+Y\}\right).
\]

Use the exact powerful-modulus decomposition and
prime-power components of Section \ref{sec:columns}.
For a component of norm $T=\N t$, its scales are
$q_t=AE/T$, $L_t=B/T$, and $X_t=Y/(TE)$.
The moving support is $D_t=\N\rad t$.
The ratio $q_t/L_t=(A/B)E$ tends to infinity
uniformly, so Lemma \ref{lem:classical-gram} applies
to every component with $X_t\ge1$; the short-column
bound handles the others.

The local main factors are \eqref{eq:local-main-factors}.
Their square roots have the same convergent local
sum $f(P)<2P^{-3/2}$. Hence a fixed enlargement of
$S$ makes the relative main contribution of all
nonunit powerful components less than $1/4$.
This uses only the explicit main term, not the
improved error of Proposition \ref{prop:gram}.

The squared component error from the classical
estimate is at most
\[
 Z^\eps E_{\rm bd}^{-1}
 \left[
   \frac AB\{T^{-1}+Y^{1/12}T^{-13/12}E^{-1/12}\}
                     +\frac{D_tY}{T^2E^2}
 \right].
\]
After square roots, use the first two elementary
divisor sums in \eqref{eq:three-sums} and
\[
 \sum_{t,\mathrm{components}}
                D_t^{1/2}T^{-1}E^{-1}E_{\rm bd}^{-1/2}
                                                    \ll1.
\]
For the last sum, the largest prime contribution is
$P^{-5/2}$, from exponent two, so the Euler product
converges. Minkowski's inequality gives
\[
 \|\mathcal A-\mathcal A^{\rm sf}\|_A
 \le \frac14\sqrt{C_{S,w}}\II(W)\sqrt{\mathcal M}
   +O_\eps\left(Z^\eps
       \{\sqrt{A/B}(1+Y^{1/24})+\sqrt Y\}\right).
\]
The error divided by $\sqrt{\mathcal M}$ is
\[
 O_\eps\left(Z^\eps
       \{Y^{-1/12}+Y^{-1/24}+Y^{11/12}/\sqrt B\}\right)
                                                     =o(1),
\]
because $Y\ge B^\delta$ and
$Y^{11/6}/B\le B^{-11\delta/6}$.
The all-modulus error is smaller than its main term
for the same reason. Thus
$\|\mathcal A^{\rm sf}\|_A^2\gg\mathcal M$.

For squarefree $b$, primitive Gauss evaluation gives
$G(b,-m)=G(b,1)\overline{\chi_b(-m)}$, including
the zero values. Write $m=\epsilon a$ with $a$
primary. One of the six unit classes carries at
least a sixth of the energy. Conjugating its
polynomial gives the character coefficients
\[
 \lambda_b=B^{-1}W(q_b/B)(q_b/B)^{-1/2}
          \overline{G(b,1)}\chi_b(-\epsilon)
          1_{b\ {\rm sf},\ (b,S)=1,\ [b]=\sigma_{\rm ray}}.
\]
Radiality preserves the weight, and ideal counting
gives $\|\lambda\|_2^2\ll B^{-1}$.
Its positive row energy makes the vector nonzero
and proves the lemma.
\end{proof}

\subsection{The original operator-norm transfer}

The transfer below may change the coefficient vector.
This is precisely the distinction from the
small-twist starting point in Section
\ref{sec:optimal-proof}, where the Gauss formula
must be retained for the later scale comparison.

\begin{proposition}\label{prop:original-transfer}
Fix $S,w$, $c>0$, $0<\delta<1/3$, and $\eps>0$.
Suppose $B^{4/3+\delta}\le A\le B^{2-\delta}$ and
a nonzero squarefree-column vector $\lambda$
satisfies
\[
 \sum_{m\in I(S)}w(m/\sqrt A)|P_\lambda(m)|^2
                  \ge cA^{5/6}B^{1/3}\|\lambda\|_2^2.
\]
Then
\[
 \Xi_{6,S}(A,B)\gg_{\delta,\eps,c,S}
                     (AB)^{-\eps}A^{5/6}B^{1/3}.
\]
\end{proposition}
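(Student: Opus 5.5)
The plan is to mirror the head-selection argument of Lemma \ref{lem:structured-anchor}, now applied to an arbitrary squarefree-column vector. There is no Gauss structure to preserve, so at the end we may change the coefficient vector freely. Normalize $\|\lambda\|_2=1$ and put $Z=AB$, $M=A^{5/6}B^{1/3}$ and $g=\delta/18$. The hypothesis range $B^{4/3+\delta}\le A\le B^{2-\delta}$ is exactly the range of Lemma \ref{lem:tails}, so both tail estimates are available. Since $0\le w\le1$ and $w$ is supported on $|z|^2\le1$, the hypothesis gives
\[
 cM\le\sum_{\substack{m\in I(S)\\ \N m\le A}}|P_\lambda(m)|^2 ,
\]
where $m$ is evaluated at its primary generator. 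It therefore suffices to find, inside this unweighted row sum, a piece of energy $\gg MZ^{-O(\eta)}$ that comes from squarefree rows against a vector of norm at most one.

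First decompose every row uniquely as $m=t^6u$ with $u$ sixth-power-free. This uses the identity $J_6(t^6u,b)=J_6(u,b)1_{(b,t)=1}$, valid without $(t,u)=1$. Apply \eqref{eq:t6-tail} with $T_6=Z^\eta$ and $\rho=\eta/4$, with $\eta\le\min\{\eps,g\}/100$. The tail is then $O(Z^{\eta/4}M(Z^{-4\eta}+Z^{-g}))=o(M)$, so the $O(Z^\eta)$ ideals $t$ with $\N t\le Z^\eta$ carry energy $\ge cM/2$. Pigeonhole one $t$ with energy $\gg MZ^{-\eta}$ for $\lambda^{(t)}$, noting that $\|\lambda^{(t)}\|_2\le1$.

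Next write $u=a_1a_2^2a_3^3a_4^4a_5^5$. Apply \eqref{eq:higher-tail} to $\lambda^{(t)}$, at the larger cutoff $A$ (which only enlarges the nonnegative sum), with $T=Z^{3\eta}$. The tail is $O(M\{Z^{-7\eta/4}+Z^{\eta/4-g}\})$, which is smaller than $MZ^{-\eta}$. Among the $O_\eta(Z^{4\eta})$ remaining tuples $(a_2,\dots,a_5)$, pigeonhole one carrying energy $\gg_\eta MZ^{-5\eta}$. Set $h=t^6a_2^2a_3^3a_4^4a_5^5$. Multiplicativity of $J_6$ in the row gives $P_{\lambda^{(t)}}(u)=\sum_b\lambda_b\chi_b(h)J_6(a_1,b)$, including every zero at primes of $t$ and of the $a_j$. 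The new vector $\lambda'_b=\lambda_b\chi_b(h)$ is still supported on squarefree $b\in I(S)$ with $\N b\le B$, and $\|\lambda'\|_2\le1$. The surviving rows $a_1$ are squarefree, lie in $I(S)$, and have $\N a_1\le A$. Dropping the pairwise coprimality conditions between $a_1$ and the fixed factors only enlarges the nonnegative sum. Hence
\[
 \sum_{\substack{a_1\in I(S)\ {\rm squarefree}\\ \N a_1\le A}}
 \Bigl|\sum_b\lambda'_bJ_6(a_1,b)\Bigr|^2\gg_\eta MZ^{-5\eta}.
\]
This forces $\lambda'\neq0$, and its Rayleigh quotient in \eqref{eq:xi} gives $\Xi_{6,S}(A,B)\gg MZ^{-5\eta}\ge(AB)^{-\eps}M$.

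The main obstacle is the bookkeeping of power losses. The preliminary losses $\rho$ inside Lemma \ref{lem:tails} must be chosen below the margins $Z^{-4\eta}$ and $Z^{-g}$. The pigeonhole counts $Z^\eta$ and $Z^{4\eta}$, together with the logarithms from dyadic tuples, must then fit inside the final $(AB)^{-\eps}$. Ordering these choices first $\eps$, then $\eta$, then $\rho$ handles this. The constant $c$ enters only through the requirement that the tails be $o(cM)$, which holds for sufficiently large $B$; bounded ranges are covered by the unit row and column. One further point to check is that the higher-factor tail estimate applies to $\lambda^{(t)}$ uniformly in $t$. This holds because the lemma is stated for arbitrary squarefree-column vectors.
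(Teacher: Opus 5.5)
Your proposal is correct and follows the paper's proof. It uses the same two tail estimates of Lemma \ref{lem:tails}, with $T_6=Z^\eta$, $\rho=\eta/4$ and $T=Z^{3\eta}$, and counts the $O(Z^\eta)$ and $O(T\log^3(2T))$ head pieces in the same way; the only difference is cosmetic. You pigeonhole an explicit twisted vector $\lambda_b\chi_b(h)$, as in Lemma \ref{lem:structured-anchor}, whereas the paper passes through the intermediate norm $\Psi_6(A,B)$ (sixth-power-free rows, squarefree columns) and absorbs the tail into a supremum inequality, which gives $MZ^{-17\eta/4}$ in place of your $MZ^{-5\eta}$.
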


\begin{proof}
Normalize $\|\lambda\|_2=1$ and write
$Z=AB$, $M=A^{5/6}B^{1/3}$, and $g=\delta/18$.
Let $\Psi_6(A,B)$ be the squared norm with
sixth-power-free rows and squarefree columns.
Choose
\[
       \eta=\min\{\eps/5,g/10\},\qquad
       \rho=\eta/4,\qquad T_6=Z^\eta.
\]
The sixth-power tail \eqref{eq:t6-tail} is $o(M)$.
The head has $O(T_6)$ ideals $t$; each is bounded by
$\Psi_6(A,B)$ after absorbing the mask
$1_{(b,t)=1}$ into the coefficients. The assumed
moment therefore gives $\Psi_6(A,B)\gg MZ^{-\eta}$.

For a sixth-power-free row, write
\[
                  u=a_1a_2^2a_3^3a_4^4a_5^5.
\]
The higher-factor head, defined by
$\prod_{j=2}^5\N a_j\le T$, has
$O(T\log^3(2T))$ tuples. Each contributes at most
$\Xi_{6,S}(A,B)$ for a unit-norm coefficient vector.
Combining this with \eqref{eq:higher-tail}, and
reserving a smaller preliminary loss for logarithms,
gives
\[
 \Psi_6(A,B)\ll Z^\rho
       \{T\Xi_{6,S}(A,B)+M(T^{-2/3}+Z^{-g})\}.
\]
With $T=Z^{3\eta}$ the error is
$O(M\{Z^{-7\eta/4}+Z^{\eta/4-g}\})=o(MZ^{-\eta})$.
Absorbing it yields
$\Xi_{6,S}(A,B)\gg MZ^{-17\eta/4}\gg MZ^{-\eps}$.
All tails used here were proved from the known sieve
upper bounds alone in Lemma \ref{lem:tails}.
\end{proof}

\begin{proof}[Proof of Theorem \ref{thm:unbalanced}]
Lemma \ref{lem:original-witness} gives a Rayleigh
quotient $\gg A^{5/6}B^{1/3}$ on unrestricted ideal
rows. Proposition \ref{prop:original-transfer} applies
with the same $\delta$, since $16/11>4/3$ and
$3/11<1/3$. The inclusion
$\Xi_{6,S_0}(A,B)\ge\Xi_{6,S}(A,B)$ returns to
the prescribed exclusions.

For the full cutoff put $\theta=11/16-\tau$ with
$0<\tau<3/16$ and $B_0=Q^\theta$.
Choose
\[
 0<\delta_0<
       \min\{3/11,\ 1/\theta-16/11,\ 2-1/\theta\}.
\]
The restricted-range result and column inclusion give
\[
 \Xi_{6,S_0}(Q,Q)\ge\Xi_{6,S_0}(Q,B_0)
             \gg Q^{17/16-\tau/3-(1+\theta)\eps_0}.
\]
Taking $\tau$ and then $\eps_0$ sufficiently small
proves the second assertion. Bounded parameter
ranges follow by decreasing the constant and using
the unit row and column.
\end{proof}

\section*{AI use disclosure}

The main framework of the present argument is due to the author.
Generative AI was used interactively in mathematical
exploration and the development and refinement of parts
of the argument. It also assisted with the organization
and language of the manuscript.

\end{document}